\newtheorem{theorem}{Theorem}[section]
\newtheorem{define}[theorem]{Definition}
\newtheorem{exa}[theorem]{Example}
\newenvironment{example}{\begin{exa} \rm}{\qee\end{exa}}
\newtheorem{exerc}[theorem]{Exercise}
\newtheorem{conj}[theorem]{Conjecture}
\newtheorem{ques}[theorem]{Open Question}
\newenvironment{question}{\begin{ques} \rm}{\qee\end{ques}}
\newtheorem{lem}[theorem]{Lemma}
\newenvironment{lemma}{\begin{lem} \it}{\end{lem}}
\newtheorem{cor}[theorem]{Corollary}
\newenvironment{corollary}{\begin{cor} \it}{\end{cor}}
\newtheorem{rem}[theorem]{Remark}
\newenvironment{remark}{\begin{rem} \rm}{\qee\end{rem}}
\DeclareMathOperator{\possible}{\text{\tikz[scale=.6ex/1cm,baseline=-.6ex,rotate=45,line width=.1ex]{
                            \draw (-1,-1) rectangle (1,1);}}}
\DeclareMathOperator{\necessary}{\text{\tikz[scale=.6ex/1cm,baseline=-.6ex,line width=.1ex]{
                            \draw (-1,-1) rectangle (1,1);}}}
\DeclareMathOperator{\gpossible}{\text{\tikz[scale=.6ex/1cm,baseline=-.6ex,rotate=45,line width=.1ex]{
                            \draw[gray, fill = gray, fill opacity = .90] (-1,-1) rectangle (1,1);}}}
 \DeclareMathOperator{\gnecessary}{\text{\tikz[scale=.6ex/1cm,baseline=-.6ex,line width=.1ex]{
                            \draw[gray, fill = gray, fill opacity = .90] (-1,-1) rectangle (1,1);}}}
                                                        \DeclareMathOperator{\xpossible}{\text{\tikz[scale=.6ex/1cm,baseline=-.6ex,rotate=45,line width=.1ex]{
                            \draw (-1,-1) rectangle (1,1); \draw[very thin] (-.6,-.6) rectangle (.6,.6);}}}
\DeclareMathOperator{\xnecessary}{\text{\tikz[scale=.6ex/1cm,baseline=-.6ex,line width=.1ex]{
                            \draw (-1,-1) rectangle (1,1); \draw[very thin] (-.6,-.6) rectangle (.6,.6);}}}
\newcommand{\qee} {\hspace*{2mm}\hfill \ding{109}}
\renewcommand{\iff}{\leftrightarrow}
\renewcommand{\phi}{\varphi}
\newcommand{\qedright}{\belowdisplayskip=-12pt}
\newcommand{\sbra}[1]{\textup(#1\textup)}
\newcommand{\frc}[1]{\aco_{#1}\top}
\newcommand{\frcr}[2]{\aco_{#1}^{#2}\top}
\newcommand{\kraj}{Kraj\'{\i}{\v{c}}ek}
\newcommand{\efun}[1]{{\sf proj}_{#1}}
\newcommand{\bbot}{\bot\!\!\!\!\bot}
\newcommand{\To}{\Rightarrow}
\newcommand{\Iff}{\Leftrightarrow}
\newcommand{\stackarrow}[1]{\stackrel{#1}{\longrightarrow}}
\newcommand{\opr}{\necessary}
\newcommand{\oco}{\possible}
\newcommand{\apr}{{\vartriangle}}
\newcommand{\aco}{{\triangledown}}
\newcommand{\graydi}{\gpossible}
\newcommand{\graysq}{\gnecessary}
 \newcommand{\nrhd}{\mathrel{\not\! \rhd}}
\newcommand{\mutint}{\bowtie}
\newcommand{\verz}[1]{\{ #1 \}}
 \newcommand{\tupel}[1]{{\langle #1 \rangle}}
 \newcommand{\gn}[1]{{\underline{\ulcorner #1 \urcorner}}}
 \newcommand{\ccz}{\ensuremath{\xpossible}}
 \newcommand{\ccu}[1]{\xpossible #1}
  \newcommand{\ccd}[2]{\xpossible_{#1}#2}
  \newcommand{\ccdu}[2]{\xpossible^{#1}#2} 
  \newcommand{\cct}[3]{\xpossible^{#1}_{#2}#3}
  \newcommand{\ppz}{\ensuremath{\xnecessary}}
   \newcommand{\ppd}[2]{\xnecessary_{#1}#2}
    \newcommand{\ppt}[3]{\xnecessary^{#1}_{#2}#3}
    \newcommand{\dom}{{\sf D}}
\newcommand{\li}{\ell}
\newcommand{\paoo}{{\sf PA}^{{\downarrow}{\downarrow}{\downarrow}}}
\newcommand{\paco}{{\sf PA}^{{\downarrow}{\downarrow}}}
\newcommand{\paba}{{\sf PA}^{\downarrow}}
\newcommand{\desca}{{\sf DA}}
\title[Friedman-reflexivity]{Friedman-reflexivity\\
{\footnotesize Interpreters as Consistoids}}
\author{Albert Visser}
 \address{Philosophy, Faculty of Humanities,
                Utrecht University,
               Janskerkhof 13,
                3512BL~~Utrecht, The Netherlands}
\email{a.visser@uu.nl}
\date{\today}
\begin{document}
\keywords{incompleteness, interpretability, consistency statement}

\subjclass[2010]{03F25,
03F30,
03F40,
}

\thanks{I thank Harvey Friedman for his permission to use his ideas as a basis for this paper.
I am grateful to Jaap van Oosten for his advice. I thank Lev Beklemishev, Joost Joosten, Tadeusz Litak, and Fedor Pakhomov for their
questions and observations. I am greatful to Joel Hamkins for his nice macros for the modal operators.}

\begin{abstract}
In the present paper, we explore an idea of Harvey Friedman to obtain a coordinate-free presentation
of consistency. Friedman shows that, over Peano Arithmetic, the consistency statement for a finitely axiomatised
theory $A$ can be characterised as the weakest statement $C$
over Peano Arithmetic such that ${\sf PA}+C$ interprets $A$.

We study the question which base theories $U$ have the property that, for any finitely
axiomatised $A$, there is  a weakest $C$ such that $U+C$ interprets $A$. We call such theories
Friedman-reflexive. We explore various implications of Friedman-reflexiveness.

We show that a very weak theory, Peano Corto, is Friedman-reflexive. 
We do not get the usual consistency statements here, but bounded, cut-free or Herbrand consistency statements.
We illustrate that Peano Corto as a base theory has additional desirable properties.

We prove a characterisation theorem for Friedman-reflexive sequential theories. We provide
an example of a Friedman-reflexive sequential theory that substantially differs
from the paradigm cases of Peano Arithmetic and Peano Corto.

The consistency-like statements provided by a Friedman-reflexive base $U$ can be used
to define a provability-like notion for a finitely axiomatised $A$ that interprets $U$ via an interpretation
$K$ of $U$ in $A$.
We explore what modal logics this idea gives rise to. We call such logics \emph{interpreter logics}. We show that, generally, these logics
satisfy the L\"ob Conditions, aka {\sf K}4. We provide conditions for when these logics extend {\sf S}4, {\sf K}45,  and L\"ob's Logic. We show that, 
if either $U$ or $A$ is sequential, then the condition for
 extending L\"ob's Logic is fulfilled. Moreover, if our base theory $U$ is sequential and if, in addition, its interpreters can be effectively found,
 we prove Solovay's Theorem. This holds even if the provability-like operator is not necessarily representable by a predicate of
 G\"odel numbers.
 
 At the end of the paper, we briefly how successful the coordinate-free approach is.
 \end{abstract}

\maketitle

\section{Introduction} 
 `Consistoids' are analogues of consistency statements that are characterised in a `coordinate-free' way in the sense
that their characterisation does not depend on arithmetisation.  There are two reasons why such analogues are
interesting. First, if we consider standard applications of the Second Incompleteness Theorem, aka G2, we would like to
eliminate arithmetisation as a hidden parameter in the statement. The arithmetisation-free version is closer to
an `honest' mathematical theorem.
Secondly, on a more adventurous note,
consistoids are able to live also in contexts where no (full) arithmetisation is possible.
In other words, by considering coordinate-free versions, we can take consistency statements out of their
comfort zone.\footnote{A different example of a study of a G2-analogue that works in a wider context can be found in \cite{pakh:fini21}.}
Not only, do we think this is interesting in itself, but  the wider view allows us also to look at the standard cases from a more general 
vantage point.

We develop a strategy proposed by Harvey Friedman. See \cite{frie:aspe21}. The basic idea is that a consistency statement for 
a finitely axiomatised $A$
over a base theory {\sf B} is the weakest statement $C$ such that ${\sf B}+C$ interprets $A$. 
We will call such weakest statements: \emph{interpreters}.
We explain interpreters in more detail  in Subsection~\ref{powersmurf}.

We study Friedman's idea in a general setting. 
The comfort zone for his idea is the class of sequential theories.
 In this familiar context, consistoids are bounded (or cut-free or Herbrand) consistency statements 
 that appear in \emph{varying} interpretations of 
 the natural numbers in the base theory. We show that essentially number-system-hopping consistoides
 may really occur.
 
 Given a Friedman-reflexive base theory {\sf B} and an interpretation $K$ of {\sf B} in a finitely axiomatised theory $A$,
 we can define the \emph{interpreter logic} of $A$ (w.r.t. $K$). This is an analogue of the provability logic of $A$.
 We will have a first look at what principles of interpreter logic we get. We show that, generally, we have at least {\sf K}4,
 i.e., the L\"ob Conditions. However, this need not yield L\"ob's Logic, {\sf GL}, since we do not necessarily have a
 Fixed Point Lemma.\footnote{We may lack the resources to formulate and prove a G\"odel Fixed Point Lemma and, even if we
 have those, the provability-like notion need not be representable by a definable predicate.}
  We provide general conditions for obtaining {\sf S}4, {\sf K}45, and {\sf GL}. In case either
 {\sf B} or $A$ is sequential, the sufficient condition for obtaining 
L\"ob's Logic turns out to be fulfilled. In case {\sf B} sequential and the association of an interpreter to $A$ is
effective for $U$, we show that the proof of Solovay's Theorem can be given with few modifications. 
This is possible even if we do not have a definition of the provability-like modality corresponding to interpreters
as a predicate of {\sf B}. 
 
 We discuss the ins and outs of what is achieved and what is not achieved in the present paper in Section~\ref{corema}.
 
 \begin{remark}\label{moppersmurf}
 \emph{Caveat emptor.} What we do not try to do in this paper is solve the philosophical problem of when a sentence
 expresses a consistency statement.  The dialectic rather has this form. Show me your favourite arithmetisation of
 a consistency statement and I will characterise it, modulo  provable equivalence, in a coordinate-free manner.
 Thus, a mathematically acceptable notion is provided, even it, in the motivation phase,
arithmetisation with all its arbitrary choices still plays a role.
 \end{remark}

  \subsection{On Reading this Paper}
  Appendix~\ref{notarissmurf} gives basic definitions and basic facts and some references
  to further literature.
  
  We point out, at places, how our work links to elementary ideas from category theory
  (universal arrows and the like). The reader who wishes can skip this without losing the main thread
  of the paper. 
  
  Our main application to
  sequential theories demands some familiarity with \emph{sequentiality}. See, e.g., \cite{haje:meta91} and
  \cite{viss:what13}. However,
  the main development of Friedman-reflexivity and interpreter logics only asks for
  understanding predicate logic and translations/interpretations. The reader could elect
  to read the results concerning sequentiality, but ignore the proofs, and still get
  a good feeling for the main line of argument.

\section{The Main Ideas of the Paper}

We explain in more detail the main ideas of the paper.

\subsection{On the very Idea of a Base Theory}
To set the stage for our Friedman-style treatment of the Second Incompleteness Theorem, G2, and
of a variant of provability logic, we discuss the base theory/main theory distinction.

A formulation of the no-interpretation version of G2 looks like this:
\[U \nrhd ({\sf B}+{\sf Con}(U)),\] in other words, $U$ does not interpret the base theory plus its own 
consistency.\footnote{A first form of the no-interpretation version is due to Feferman. See \cite{fefe:arit60}. Feferman's
form was stated for extensions of {\sf PA} and for the case that main theory and base coincide.}
Here  ${\sf Con}(U)$ is the consistency statement, where we allow various further specifications of what it 
could be. Also, for a concrete formulation, there may be further conditions on $U$.

A first reason to set things up in this format is simply the nice general form of the statement.
E.g., we have Pudl\'ak-style G2:
$U \nrhd ({\sf Q}+ \oco_\alpha \top)$.\footnote{Pavel Pudl\'ak contributed the main ingredient of the result
in the present formulation. See \cite{pudl:cuts85}.} Here $\oco_\alpha\top$ is
a specific form of the consistency statement for $U$, where we formalised consistency in
arithmetic is a sufficiently good way and where $\alpha$ is a $\Sigma^0_1$-formula representing
the axiom set of $U$. The statement is general since there is no conditionalisation to `theories that
are sufficiently strong' and the like. Also, it is strong since {\sf Q} is very weak, so the
contribution of the base to the non-interpretability is minimal.
We note that Pudl\'ak's proof of this version of G2 does contain G\"odel's
original argument but extends it with new ideas like Solovay's method of shortening cuts. So perhaps, we can also say that
the Pudl\'ak-style formulation counts as a strengthening of G2.

A second reason is more proof-oriented.
Consider, for example, G2 for {\sf ZF}. 
We can easily formalise G2 in {\sf ZF} using the set-theoretical resources for coding sequences and the like.
In fact, this is easier than formalisation in {\sf PA} with only zero, successor, plus and times in the 
signature.  On the other hand, {\sf ZF} has a standard interpretation of {\sf PA} in the finite von Neumann
ordinals. We can simply import the arithmetisation of syntax as usually done in {\sf PA} in {\sf ZF} via
the von Neumann interpretation. The advantage of doing it like this that we see that G\"odel's
\emph{proof} works uniformly across theories as soon as we have an interpretation of a suitable base.
In this context, the best version is $U \nrhd ({\sf S}^1_2+ \oco_\alpha \top)$. Here ${\sf S}^1_2$ is
Buss's weak arithmetical theory for the study of p-time 
computability.\footnote{See \cite{buss:boun86} or \cite{haje:meta91} for the basic development of ${\sf S}^1_2$.}
Without any additional effort, as compared to, e.g., {\sf PA}, G\"odel's reasoning can be
 repeated in ${\sf S}^1_2$. In fact, ${\sf S}^1_2$ is better, since it prevents all kinds of
 silly and inefficient choices of doing the arithmetisation. Moreover, Pudl\'ak's argument for his strengthened version
 can be framed as first showing that $({\sf Q}+\oco_\alpha \top) \mutint ({\sf S}^1_2+ \oco_\alpha\top)$
 and, then, concluding the {\sf Q}-version from the ${\sf S}^1_2$-version. 
 We think that the ${\sf S}^1_2$-version should be viewed as G2 proper, since it
 is proved simply by G\"odel's original reasoning.
 
 A third reason is mathematico-philosophical. The usual versions of G2 depend on certain design choices.
 Can we choose the base theory in such a way that either design choices become so natural
 that they are, as it were, intrinsically given, or, in such a way that they are fully
 eliminated? 
 \begin{itemize}
 \item
 The first idea  is explored by Volker Halbach and Graham Leigh in a forthcoming book, \cite{halb:road21}.
 Let us point out how natural this idea is. Arithmetisation can be viewed as the development
 of a chain of interpretations ending in an appropriate syntax theory.
 Why not take this syntax theory as base? However, many syntax theories are possible. Which is the right one?
 Also, how do we get a syntax theory that really determines which choices to make, e.g., to represent a proof?
 \item
 The second idea is explored in this paper. We develop an idea of Harvey Friedman to eliminate
 design choices entirely in our base theory. Friedman's idea does not always deliver the usual consistency
 statements but also other things: `consistoids'.  
We elaborate on the idea in Subsection~\ref{powersmurf}.

In Subsection~\ref{moresmurf}, we will explain how the idea of main/base works out
when we consider provability-like logics based on consistoids.
\end{itemize}

Finally, there is a fourth reason. There has been philosophical discussion on the question:
when does a predicate logical sentence really express a consistency statement? 
One line of argument could be that we choose as base theory a meaningful theory and that
it is the semantics of the base theory that carries the main burden of meaning giving. 
See also \cite{viss:seco16} for some further discussion.

\subsection{Interpretation Power}\label{powersmurf}
What makes a sentence an analogue of a consistency statement? We zoom in on an answer proposed by Harvey Friedman, in \cite{frie:aspe21},
 to wit, that
the hallmark of consistency statements is the kind \emph{interpretation power} typical for consistency statements in virtue of
 the Interpretation Existence Lemma (see \cite{viss:inte18} for a detailed exposition of this lemma). The lemma says, roughly, that we can interpret
a theory $U$ in a suitable base theory plus a consistency statement for $U$.  

In case {\sf B} extends ${\sf S}^1_2$, $V$ is RE and $\alpha$  is $\Sigma^0_1$, we have the following formulation of Interpretation Existence: 
$({\sf B}+\oco_\alpha \top) \rhd V$.
We will say that any {\sf B}-sentence $B$ such that $({\sf B}+B) \rhd V$ is a \emph{pro-interpreter} of $V$ (over {\sf B}).
So, Interpretation Existence tells us that $\oco_\alpha\top$ is a pro-interpreter.
A study of pro-interpreters was undertaken in \cite{viss:seco12}.

A disadvantage of the idea of pro-interpreters is that they are not uniquely determined over the base theory.
To make them unique, we have to impose an extra demand. The obvious one is that we consider the
\emph{weakest} pro-interpreter with respect to {\sf B}-provability. Let us call such a weakest pro-interpreter simply \emph{an interpreter}.

In this paper, we will restrict ourselves to the case of consistency analogues for \emph{finitely axiomatised} theories $A$.
Needless to say that this assumption simplifies a lot.
We see that an interpreter of $A$ over {\sf B} is a {\sf B}-sentence $C$ such that
$({\sf B}+C)\rhd A$ and, for all {\sf B}-sentences $B$, we have, if $({\sf B}+B) \rhd A$, then ${\sf B}+B\vdash C$. 
Alternatively, we can say that $C$ is an interpreter of $A$ over {\sf B} iff, for all {\sf B}-sentences $B$,
$({\sf B}+B) \rhd A$ iff ${\sf B}+B \vdash C$. 

Unfortunately, over the base theory ${\sf S}^1_2$, this idea will not work. Consider any consistent $A$ such that
${\sf S}^1_2 \nrhd A$. E.g., $A$ could be Elementary Arithmetic {\sf EA}. Suppose we had an interpreter $C$ of $A$,
Then, we have, for any ${\sf S}^1_2$-cut $I$,\footnote{Our cuts are downward closed w.r.t. $<$ and
are closed under zero, successor, addition, multiplication and $\omega_1$, i.e., the function $x \mapsto 2^{({}^2{\sf log}(x))^2}$.} 
that $({\sf S}^1_2+ \oco^I_A \top) \rhd A$.
So, for all $I$, we find ${\sf S}^1_2+ \oco^I_A\top \vdash C$. In other words,
for all $I$, we find ${\sf S}^1_2+ \neg\, C  \vdash \opr^I_A\bot$. 
Since ${\sf S}^1_2 \nrhd A$, we have ${\sf S}^1_2 \nvdash C$ and, hence,
${\sf S}^1_2+ \neg\, C$ is consistent. By Theorem~\ref{hulpsmurf},
we find that $\opr_A\bot$ is true, and, thus, that $A$ is inconsistent, which contradicts our 
assumption.\footnote{A stronger version of this insight is
given in  Theorem~\ref{restraintsmurf}.}
Thus, Friedman's idea forces us towards other bases.

We call a theory that does have  the desirable property that each finitely axiomatised $A$ has an interpreter over the theory
  \emph{Friedman-reflexive}.\footnote{The ratio
behind the second name will become clear in the paper.}
Friedman's example of such a theory is Peano Arithmetic {\sf PA}. He shows that, indeed, {\sf PA} is Friedman-reflexive.
See Section~\ref{parapea}.
A disadvantage of the choice of {\sf PA} is that it is very strong.
 Our proposal for the ideal Friedman-reflexive theory is Peano Corto or $\paco$. We will show, in
 Subsection~\ref{cortoboven}, that $\paco$ has some good further properties as a base. 

\section{Basics}\label{basics}
In this section we give the basic definitions and state and prove some basic facts.

\subsection{Definitions}
 The variables $T,U,V,\dots$ range over theories in finite signature. 
These theories, generally, need not be RE. We allow inconsistent theories as values.

The variables $A,B,\dots$ range ambiguously over sentences and over
 finitely axiomatised theories. We confuse the finite conjunction of the finitely axiomatised
 theory $A$ with a single sentence $A$.
 
 There is a subtle point here. Let $U$ be any theory and let $\tau$ be an interpretation 
 from the $A$-language to the $U$-language. Let $E_A$ be the finite conjunction of the
 identity axioms for the signature of $A$.\footnote{Here it is essential that we consider
 the signature of $A$ \emph{qua theory}. Not all symbols of this signature need to actually occur in the
 conjunction of the axioms of $A$.}  We assume that $\exists x\, x=x$ is among these
 axioms. Then, $U+(E_A\wedge A)^\tau$ interprets $A$ with an interpretation based on $\tau$.
 The reason that we need $E_A$ for this is that, in first order logic, identity is treated as a
 logical constant but that in the definition of translation this is ignored and identity may be
 translated to some $U$-formula. The inclusion of $\exists x\, x=x$ in the identity axioms
 is needed since we, somewhat unnaturally, assume non-empty domains.

We say that $C$ is \emph{an interpreter of $A$ over $U$} iff, for all $B$ in the language of $U$, we have: $(U+B) \rhd A$ iff
$U+B \vdash C$.

 A theory $U$ is  \emph{Friedman-reflexive} iff all finitely axiomatised $A$ have an interpreter
 $C$ over $U$.
 
 Suppose $U$ is Friedman-reflexive. 
 We will use $\ccz(\cdot)$ for a function that selects, for each $A$, an interpreter $C$ of $A$ over $U$. 
 So, \ccz\ is only uniquely determined modulo $U$-provability.
 We write $\ccd A B$ for $\ccu {(A \wedge B)}$. If we want to make the dependence on $U$ explicit, we write
 $\ccz_{(U)}$, $\ccz_{(U),A}$, etcetera. However, we prefer to treat the dependence on $U$
 as contextually given as much as possible. We write $\ppz$ for $\neg\ccz \neg$ and $\ppz_A$ for $\neg\ccz_A \neg$.
 
 It is easy to see that $\ccu A$ and $\ccd A\top$ are equivalent over $U$. 
 We will use the second in contexts where we are interested in extensions of $A$.
 
The theory $U$ is \emph{effectively Friedman-reflexive} iff we can choose \ccz\ to be recursive. 
Note the existential quantifier here: there may very well be both recursive and non-recursive choices of
\ccz. In fact, we will see a salient example of that possibility.
 
\subsection{The Categorical Viewpoint}
We can view what is going on here in category theoretic terms as follows.
Let $\mathbb B_U$ be the partial order category of all finite extensions
of the base theory $U$ (in the same language) with order $\subseteq$, i.e. extension in the same language.
Let $\mathbb D$ be the partial pre-order category of theories ordered by interpretability $\lhd$.\footnote{In the light of our specific
application, we can replace $\lhd$ by $\lhd_{\sf loc}$.}  Let $\efun U$ be the projection functor
 from  $\mathbb B_U$ into $\mathbb D$. Then, $C$ is an interpreter of $A$ iff $A \lhd (U+C)$ is a universal arrow from $A$ to $\efun U$. 

 \begin{remark}
 We note that if we restrict $\mathbb D$ to a subcategory that contains both the finitely axiomatised theories and the
 finite extensions of $U$, but that preserves the arrows between the objects, then universality is preserved in both directions.
 
 So, e.g., in case $U$ is RE, we can restrict $\mathbb D$ to the category of RE theories with the same effect.
 \end{remark}
 
  We can view Friedman-reflexivity as follows. Let $\mathbb D_{\sf fin}$ be the  partial pre-order category of
 finitely axiomatised theories ordered by interpretability. Let {\sf emb} be the embedding functor from 
 $\mathbb D_{\sf fin}$ to $\mathbb D$. Friedman-reflexivity tells us that, for each $A$, there is a universal arrow from ${\sf emb}(A)$ to
 $\efun U$. 

 \subsection{Semi Normal Form}
 Interpreters have a kind of non-unique normal forms.
 
 \begin{theorem}
 Suppose $C$ is an interpreter of $A$ over $U$. Then, there is a translation $\tau$ of the $A$-language in the
 $U$-language, such that $U \vdash C \iff (E_A \wedge A)^\tau$.
 \end{theorem}
 
 \noindent
 We note that $\tau$ need not be unique.
 
 \begin{proof}
 Suppose $C$ is an interpreter of $A$ over $U$. 
 We have $(U+C) \rhd A$. Let $\tau$ be the translation underlying a witnessing interpretation.
 Then $U+C \vdash (E_A \wedge A)^\tau$. 
 
 On the other hand,
 $(U+(E_A \wedge A)^\tau) \rhd A$. So, by the defining property of interpreters, $U+(E_A \wedge A)^\tau\vdash C$.
 \end{proof}

 \subsection{Some Basic Facts}
We show that interpreters are unique.

\begin{theorem}
Suppose both $C$ and $C'$ are interpreters of  $A$ over $U$. Then, we have $U \vdash C \iff C'$.
\end{theorem}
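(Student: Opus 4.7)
The plan is to prove the two directions $U + C \vdash C'$ and $U + C' \vdash C$ separately, exploiting the biconditional characterisation of being an interpreter applied with a carefully chosen $B$.

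First I would observe a useful special case: if $C$ is an interpreter of $A$ over $U$, then $(U+C) \rhd A$. This follows by instantiating the defining biconditional with $B := C$, since $U + C \vdash C$ holds trivially, so the right-hand side of the equivalence is satisfied and therefore the left-hand side gives $(U+C) \rhd A$. In other words, an interpreter is itself a pro-interpreter, which is the natural sanity check one should make before anything else.

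Now to get $U + C \vdash C'$, I apply the defining biconditional for $C'$ with $B := C$. By the previous paragraph, $(U+C) \rhd A$, and so the biconditional yields $U + C \vdash C'$. By the completely symmetric argument, interchanging the roles of $C$ and $C'$, we obtain $U + C' \vdash C$. Combining these gives $U \vdash C \iff C'$, as required.

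There is no real obstacle here; the argument is essentially a one-line reshuffling of the definition, and I do not anticipate any subtlety concerning the identity axioms or the translation $\tau$, since we are working entirely inside the language of $U$ and comparing $U$-sentences via $U$-provability. The only thing to be a little careful about is the reading of the biconditional in the definition, namely that it quantifies over \emph{all} $B$ in the language of $U$, which is precisely what licenses the substitutions $B := C$ and $B := C'$.
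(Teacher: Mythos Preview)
Your proof is correct and follows essentially the same approach as the paper: instantiate the defining biconditional at $B:=C$ to get $(U+C)\rhd A$, feed this into the defining biconditional for $C'$ to obtain $U+C\vdash C'$, and then argue symmetrically.
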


\begin{proof}
Since $(U+C) \vdash C$, if follows that$(U+C) \rhd A$, and, hence $(U+C') \vdash C$. The converse direction is similar.
\end{proof}
 
 \begin{remark}
 The above uniqueness argument is  a special case
 of the usual argument for uniqueness of universal arrows.
 \end{remark}

 We show that \ccz\ is has a functorial property w.r.t. a Friedman-reflexive theory.
 
 \begin{theorem}\label{minismurf}
 Let $U$ be Friedman-reflexive and suppose $A \rhd B$. Then, we have $U \vdash \ccu A \to \ccu B$.
 \end{theorem}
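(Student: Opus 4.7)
The plan is to unfold both interpreter properties and exploit transitivity of interpretability.

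First, I would note that ${\sf C}(A)$, being an interpreter of $A$ over $U$, satisfies the characterising biconditional with the sentence ${\sf C}(A)$ itself plugged in: since trivially $U+{\sf C}(A) \vdash {\sf C}(A)$, the interpreter property yields $(U+{\sf C}(A)) \rhd A$. This is the ``canonical'' witness that the interpreter really does what it is supposed to do.

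Next, I would compose interpretations. From $(U+{\sf C}(A)) \rhd A$ and the hypothesis $A \rhd B$, the transitivity of $\rhd$ gives $(U+{\sf C}(A)) \rhd B$.

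Finally, I would apply the interpreter property of ${\sf C}(B)$ in the other direction: taking the sentence ${\sf C}(A)$ as the witness, $(U+{\sf C}(A)) \rhd B$ entails $U+{\sf C}(A) \vdash {\sf C}(B)$, which is the desired $U \vdash {\sf C}(A) \to {\sf C}(B)$.

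There is no real obstacle here; the statement is essentially the universal-arrow property of ${\sf C}(B)$ expressed concretely, and amounts to the observation that ${\sf C}$ is monotone with respect to the interpretability pre-order on finitely axiomatised theories. The only point worth being careful about is that composition of interpretations really does give $(U+{\sf C}(A)) \rhd B$ in the intended sense (i.e., that identity axioms $E_B$ are handled correctly), but this is built into the conventions set up at the start of Section~\ref{basics}.
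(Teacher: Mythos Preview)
Your proof is correct and follows essentially the same route as the paper: use $(U+{\sf C}(A)) \rhd A$, compose with $A \rhd B$, and then apply the interpreter property of ${\sf C}(B)$ to obtain $U+{\sf C}(A) \vdash {\sf C}(B)$. The paper simply compresses these three steps into the single line $(U+{\sf C}(A)) \rhd A \rhd B$, so $U \vdash {\sf C}(A) \to {\sf C}(B)$.
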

 
 \begin{proof}
 Let $U$ be Friedman-reflexive. Suppose $A\rhd B$. Then, $(U+\ccu A) \rhd A \rhd B$.
 So,   $U \vdash \ccu A \to \ccu B$.
 \end{proof}
 
 \begin{remark}\label{cateen}
 Theorem~\ref{minismurf} is a special case of an elementary category theoretic insight.
 Suppose we have categories $\mathcal A$, $\mathcal B$, $\mathcal C$ and
 functors $F:\mathcal A \to \mathcal C$, $G: \mathcal B \to \mathcal C$.
 Suppose further that, for every $a$ in $A$ there is a universal arrow from
 $F(a)$ to $G$. Then, there is a functor $H: \mathcal A \to \mathcal B$, such that
 our promised universal arrows are a natural transformation from $F$ to $G\circ H$.
 \end{remark}
 
Here is a fact that does not follow from the general categorical ideas but is specific to our setting.
 
 \begin{theorem}\label{entwederodersmurf}
  Let $U$ be Friedman-reflexive. Then \ccz\ commutes, modulo $U$-provable equivalence, with finite
  disjunctions of sentences in the same signature, including the empty one. In other words, $U \vdash \neg\, \ccu\bot$ and,
  if $A$ and $B$ have the same signature, then $U \vdash \ccu{(A\vee B)} \iff (\ccu A \vee  \ccu B)$.
 \end{theorem}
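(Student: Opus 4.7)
The plan is to treat the two claims separately, as the first follows almost directly from the universal property while the second requires a small but pretty trick.

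For $U\vdash \neg\,{\sf C}(\bot)$: by the defining property of an interpreter, $(U+{\sf C}(\bot))\rhd \bot$. But any theory that interprets the inconsistent theory is itself inconsistent (translations preserve the logical constant $\bot$, so $\bot^\tau=\bot$). Thus $U+{\sf C}(\bot)$ is inconsistent, i.e.\ $U\vdash\neg\,{\sf C}(\bot)$.

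For the disjunction equivalence, the easy direction ($\leftarrow$) is immediate from Theorem~\ref{minismurf}: since $A$ and $B$ share the signature of $A\vee B$ and each of $A,B$ proves $A\vee B$, we have $A\rhd (A\vee B)$ and $B\rhd (A\vee B)$ via the identity interpretation, whence $U\vdash {\sf C}(A)\to {\sf C}(A\vee B)$ and $U\vdash {\sf C}(B)\to {\sf C}(A\vee B)$.

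The substantive direction is ($\to$). By the universal property, $(U+{\sf C}(A\vee B))\rhd (A\vee B)$; fix a witnessing interpretation $\tau$, so that $U+{\sf C}(A\vee B)\vdash A^\tau\vee B^\tau$ (translations commute with propositional connectives, and the identity axioms $E_{A\vee B}^\tau=E_A^\tau=E_B^\tau$ are already provable). Now observe that the same $\tau$ witnesses $(U+{\sf C}(A\vee B)+A^\tau)\rhd A$: the needed identity axioms for $A$'s signature are already in place, and $A^\tau$ is now an axiom. Applying the universal property of ${\sf C}(A)$ gives $U+{\sf C}(A\vee B)+A^\tau\vdash {\sf C}(A)$, and symmetrically $U+{\sf C}(A\vee B)+B^\tau\vdash {\sf C}(B)$. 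Combining these with $A^\tau\vee B^\tau$ above yields $U+{\sf C}(A\vee B)\vdash {\sf C}(A)\vee {\sf C}(B)$, as required.

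The main thing to get right is this last step: one must notice that the single interpretation $\tau$ witnessing the disjunctive interpretability can be reused as a witness for interpretability of each disjunct \emph{after} adding the corresponding translated sentence as a hypothesis, thereby reducing the disjunction of interpretability statements to a disjunction inside $U+{\sf C}(A\vee B)$.
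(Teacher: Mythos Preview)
Your proof is correct and follows essentially the same approach as the paper's: both use the single witnessing translation $\tau$ and split on the disjuncts to reduce to the interpreter property of ${\sf C}(A)$ and ${\sf C}(B)$. The only cosmetic difference is that the paper carries the identity axioms $E$ explicitly (writing $(E\wedge A)^\tau$, $(E\wedge B)^\tau$) and does the case split by contraposition on $\neg(E\wedge B)^\tau$, whereas you observe once that $E^\tau$ is already provable in $U+{\sf C}(A\vee B)$ and split directly on $A^\tau$ versus $B^\tau$.
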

 
 \begin{proof}
 We have $(U+ \ccu{\bot}) \rhd \bot$. So, $U \vdash \neg \ccu{\bot}$.
 
 We have $A \vdash (A\vee B)$ and, hence, $A \rhd (A\vee B)$. It follows, by Theorem~\ref{minismurf},
 that $U \vdash \ccu A \to  \ccu{(A\vee B)}$. Similarly, $U \vdash  \ccu B \to  \ccu{(A\vee B)}$.
 Ergo,  $U \vdash  (\ccu A\vee  \ccu B) \to  \ccu{(A\vee B)}$.
 
We have $(U+ \ccu{(A\vee B)}) \rhd (A\vee B)$. Let $\tau$ be the underlying translation of some witnessing interpretation.
 We have:\qedright
\begin{eqnarray*}
 U+ \ccu{(A\vee B)} &\vdash& (E_A\wedge (A\vee B))^\tau \\
 &\vdash & (E_A\wedge A)^\tau\vee (E_A\wedge B)^\tau \\
 & \vdash & \ccu A \vee  \ccu B
 \end{eqnarray*}
 \end{proof}
 
 We define $W:=U \owedge V$ as follows. The signature of $W$ is the disjoint union of the signatures of $U$ and $V$ 
plus two unary domain predicates $\triangle_0$ and $\triangle_1$. We have the axioms of $U$ relativised to $\triangle_0$,
the axioms of $V$ relativised to $\triangle_1$ plus axioms that say that the $\triangle_i$ form a partition of the domain.

The following fact again follows from the categorical framework alone in combination with the fact
that $\owedge$ is a supremum operator in $\mathbb D_{\sf fin}$.

\begin{theorem}
Suppose $U$ is Friedman-reflexive and $A$ and $B$ are finitely axiomatised.
Then, $U \vdash  \ccu{(A \owedge B)} \iff (\ccu A \wedge  \ccu B)$.
\end{theorem}

\begin{proof}\qedright
\begin{eqnarray*}
U+ D \vdash  \ccu{(A \owedge B)} & \Iff & (U+D)  \rhd (A\owedge B) \\
& \Iff & (U+D) \rhd A \text{ and }  (U+D) \rhd B \\
& \Iff & U+D \vdash \ccu A \text { and }  U+D \vdash  \ccu B \\
& \Iff & U+D \vdash \ccu A \wedge \ccu B
\end{eqnarray*}
\end{proof}
 
 \noindent
 We will meet $\owedge$ again in Corollary~\ref{insmurf}.
 
 \subsection{Alternative Characterisation}
 We have an alternative characterisation of Friedman-reflexivity that gives us a full adjunction.
 We write $U \subseteq V$ for $V$ is an extension of $U$ in the same language.
 
 \begin{theorem}
 A theory $U$ is Friedman-reflexive iff \textup{(\dag)} for every theory $W$, there is a theory $\mathfrak C(W) \supseteq U$,
 such that, for all $V\supseteq U$, we have $V \rhd_{\sf loc} W$ iff $V \supseteq \mathfrak C (W)$.
 \end{theorem}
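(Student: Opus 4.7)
For the forward direction (Friedman-reflexive $\To$ (\dag)), given an arbitrary theory $W$, the natural candidate is
\[\mathfrak C(W) \;:=\; U + \{{\sf C}(A) : A \text{ is a finite conjunction of axioms of }W\},\]
which sits in the language of $U$ as required. I then need to show, for every $V \supseteq U$, that $V \rhd_{\sf loc} W$ iff $V \supseteq \mathfrak C(W)$. Unfold $V \rhd_{\sf loc} W$ as: for every finite conjunction $A$ of axioms of $W$, $V \rhd A$. The key observation, proved by compactness, is that for $V \supseteq U$ and finitely axiomatised $A$, $V \rhd A$ iff $V \vdash {\sf C}(A)$. Indeed, $V \rhd A$ yields via compactness a sentence $B$ with $V \vdash B$ and $(U+B) \rhd A$, so Friedman-reflexivity gives $U+B \vdash {\sf C}(A)$, hence $V \vdash {\sf C}(A)$; the converse is immediate from $(U + {\sf C}(A)) \rhd A$. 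Applied uniformly in $A$, this yields $V \rhd_{\sf loc} W$ iff $V \vdash {\sf C}(A)$ for all relevant $A$, iff $V \supseteq \mathfrak C(W)$.

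For the converse ((\dag) $\To$ Friedman-reflexive), given finitely axiomatised $A$, apply (\dag) with $W := A$ to obtain $\mathfrak C(A) \supseteq U$. Since $\mathfrak C(A) \supseteq \mathfrak C(A)$, we get $\mathfrak C(A) \rhd_{\sf loc} A$, which collapses to $\mathfrak C(A) \rhd A$ because $A$ is a single sentence. By compactness there is a finite conjunction $C$ of axioms of $\mathfrak C(A)$ with $(U + C) \rhd A$. I claim this $C$ is an interpreter. For any sentence $B$ in the language of $U$, apply (\dag) to $V := U+B$: we have $(U+B) \rhd A$ iff $(U+B) \rhd_{\sf loc} A$ iff $U+B \supseteq \mathfrak C(A)$. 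Since $C$ is a conjunction of axioms of $\mathfrak C(A)$, the latter inclusion forces $U+B \vdash C$. Conversely, from $(U+C) \rhd A$ and (\dag) we get $U+C \supseteq \mathfrak C(A)$, so $U+B \vdash C$ entails $U+B \supseteq \mathfrak C(A)$, hence $(U+B)\rhd A$. Thus $C = {\sf C}(A)$.

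The only non-routine step is the compactness bridge linking the sentence-based formulation of Friedman-reflexivity (quantifying over a single sentence $B$) with the theory-based formulation in (\dag) (quantifying over arbitrary $V \supseteq U$). Once one notices that $\rhd_{\sf loc}$ already builds finiteness into its definition on the right-hand side, and that compactness lets us reduce $V \rhd A$ to $U + (\bigwedge V_0) \rhd A$ for some finite $V_0 \subseteq V$, both directions reduce to bookkeeping. A minor care point is that $\supseteq$ here is up to $U$-provable equivalence of axiom sets rather than literal set inclusion, but this is immediately absorbed by the use of $\vdash$ throughout.
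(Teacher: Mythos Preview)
Your proof is correct and follows essentially the same route as the paper. The only cosmetic differences are that the paper takes $\mathfrak C(W) := U + \{{\sf C}(A) : W \vdash A\}$ (ranging over all $W$-provable sentences rather than finite conjunctions of axioms, which gives the same theory), and in the converse the paper phrases the key step as ``$C$ axiomatises $\mathfrak C(A)$ over $U$'' before running the final equivalence chain, whereas you verify the two inclusions $U+C \supseteq \mathfrak C(A)$ and $\mathfrak C(A) \vdash C$ inline.
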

 
 \begin{proof}
 Suppose $U$ is Friedman-reflexive. We prove (\dag). Consider any $W$.
 We define $\mathfrak C(W) := U + \verz{\ccu A \mid W \vdash A}$.
Let $V\supseteq U$. 
 We have:
 \begin{eqnarray*}
 V \rhd_{\sf loc} W & \Iff & \forall A\, (W \vdash A \To V \rhd A) \\
 & \Iff &  \forall A\, (W \vdash A \To \exists D\, (V\vdash D \text{ and } (U+D) \rhd A)) \\
 & \Iff & \forall A\, (W \vdash A \To \exists D\, (V\vdash D \text{ and } U+D \vdash \ccu A) )\\
  & \Iff & \forall A\, (W \vdash A \To V \vdash \ccu A) \\ 
  & \Iff & V \supseteq \mathfrak C(W)
 \end{eqnarray*}
 
 Suppose (\dag). Consider any finitely axiomatised $A$. 
 We have $\mathfrak C(A) \rhd A$. It follows that for some $C$, we have $\mathfrak C(A) \vdash C$ and $C \rhd A$.
  Since $(U+C) \rhd A$, we have $(U+C) \supseteq \mathfrak C(A)$. Thus, $C$ axiomatises $\mathfrak C(A)$ over $U$.
  It follows that:
  \begin{eqnarray*}
  (U+B) \rhd A & \Iff &   (U+B) \rhd_{\sf loc} A \\
 & \Iff & (U+B) \supseteq \mathfrak C(A)\\
 & \Iff & (U+B) \vdash C
 \end{eqnarray*}
So, we can take
$\ccu A := C$. 
\end{proof}

\noindent
 We translate our alternative characterisation in categorical terms.
 Let $\mathbb B^+_U$ be the category of all extensions of $U$ in the same language
with as arrows $\subseteq$.
 Let $\mathbb D_{\sf loc}$ be the category of all theories with the  local interpretability relation as arrows.
Let ${\sf proj}^+_U$ be the projection functor of $\mathbb B^+_U$ into $\mathbb D_{\sf loc}$. 
Then $U$ is Friedman-reflexive iff  ${\sf proj}^+_U$ has a left adjoint $\mathfrak C$.
 
 \subsection{Polyglotticity}
 A theory $U$ is \emph{polyglot} or \emph{polyglottic} if, for every consistent finitely axiomatised  $A$, there is
 a pro-interpreter $B$ of $A$ such that $U+B$ is consistent.
 
 We remind the reader that $T$ locally tolerates $V$ if, for every finite sub-theory $A$ of $V$, there is a translation
 $\tau$ such that $T$ is consistent with $(E_A \wedge A)^\tau$.

 \begin{theorem}\label{polyq}
 $U$ is polyglot iff $U$ locally tolerates the theory ${\sf Q}$ plus the true $\Pi^0_1$-sentences.
 \end{theorem}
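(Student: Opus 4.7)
The plan is to use the Interpretation Existence Lemma (already invoked in Section~\ref{powersmurf}) as a bridge between the two notions. Polyglotness asks, for each consistent finitely axiomatised $A$, for a consistent extension of $U$ that interprets $A$; local tolerance of ${\sf Q}$ plus the true $\Pi^0_1$-sentences asks, for each finite fragment $W = {\sf Q}+P$ of this theory, for a translation $\tau$ such that $U+(E_W\wedge W)^\tau$ is consistent. Since consistent finitely axiomatised theories are exactly those interpretable from ${\sf Q}$ together with an appropriate true $\Pi^0_1$-sentence, the two should match up.

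For the forward direction, I would pick an arbitrary finite sub-theory $W$ of ${\sf Q}$ plus the true $\Pi^0_1$-sentences, which is of the form ${\sf Q}+P$ with $P$ a conjunction of true $\Pi^0_1$-sentences. Then $W$ is a finitely axiomatised theory that holds in the standard model, hence consistent. Polyglotness supplies a pro-interpreter $B$ of $W$ over $U$ with $U+B$ consistent. Unpacking the interpretation $(U+B)\rhd W$ produces a translation $\tau$ with $U+B \vdash (E_W \wedge W)^\tau$, so $U+(E_W\wedge W)^\tau$ is consistent, giving local tolerance of $W$.

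For the converse, let $A$ be consistent and finitely axiomatised. By the Interpretation Existence Lemma, there is a $\Sigma^0_1$ (indeed $\Sigma^{\sf b}_1$) representation $\alpha$ of the axioms of $A$ such that $({\sf Q}+\oco_\alpha\top)\rhd A$. Since $A$ is consistent, $\oco_\alpha\top$ is a true $\Pi^0_1$-sentence, so $W := {\sf Q}+\oco_\alpha\top$ is a finite sub-theory of ${\sf Q}$ plus the true $\Pi^0_1$-sentences. Local tolerance supplies $\tau$ with $B := (E_W\wedge W)^\tau$ consistent with $U$. Then $U+B$ interprets $W$ via an interpretation based on $\tau$, and $W\rhd A$, so by composition $(U+B)\rhd A$. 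Hence $B$ is a pro-interpreter of $A$ with $U+B$ consistent, and $U$ is polyglot.

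I do not expect a serious obstacle: the two directions are essentially bookkeeping once one has the Interpretation Existence Lemma and the convention about identity axioms $E_A$ from Section~\ref{basics}. The only care needed is to ensure that the representation $\alpha$ chosen for $A$ in the converse direction is simple enough for Interpretation Existence to apply over ${\sf Q}$, which is standard.
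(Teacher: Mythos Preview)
Your proof is correct and follows essentially the same route as the paper's. One tiny quibble: a finite sub-theory of ${\sf Q}$ plus the true $\Pi^0_1$-sentences need not contain all of ${\sf Q}$, so it is not literally of the form ${\sf Q}+P$; but this does not affect your argument, since any such sub-theory is still true in the standard model and hence consistent.
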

 
 \begin{proof}
 Suppose $U$ is polyglot. Let $A$ be a finite sub-theory of  ${\sf Q}$ plus the true $\Pi^0_1$-sentences.
 Then, for some $B$, we have $U+B$ is consistent and $(U+B) \rhd A$.
 Let $\tau$ be the translation on which the interpretation of $A$ in $U+B$ is based. 
 Then, $U$ is consistent with $(E_A+A)^\tau$.
 
 Conversely, suppose $U$ locally tolerates the theory ${\sf Q}$ plus the true $\Pi^0_1$-sentences.
 Consider any finitely axiomatised consistent theory $D$. Then, $\oco_D\top$ is true.
 So, for some $\tau$, we have $U+(E_{\sf Q}\wedge{\sf Q}\wedge \oco_D\top)^\tau$ is consistent.
 By the Interpretation Existence Lemma, we find that  
 $(U+(E_{\sf Q}\wedge{\sf Q}\wedge \oco_D\top)^\tau) \rhd ({\sf Q}+\oco_D\top) \rhd D$.
 \end{proof}
 
  \noindent Since {\sf Q} interprets ${\sf S}^1_2$ on a cut, we have the same result with ${\sf S}^1_2$ substituted for {\sf Q}.

 \subsection{A Computational Insight}
 We end this section with two computational results about effectively Friedman-reflexive theories.
 
  \begin{theorem}\label{luiesmurf}
 If $U$ is consistent and effectively Friedman-reflexive, then $U$ is essentially undecidable.
 \end{theorem}
 
 \begin{proof}
 Suppose $U$ is consistent and effectively Friedman reflexive. Suppose $U$ had a consistent and decidable extension $V$.
 By Theorem~\ref{upsmurf}, the theory $V$ is effectively Friedman reflexive with recursive \ccz.
  
 Let $[S]$ be a theory of a witness of $\Sigma^0_1$-sentence $S$. See \cite{viss:onq17} for details.
 If $S$ is true, then $[S]$ has a finite model and, thus,
 any theory interprets $[S]$. It follows that $V \vdash  \ccu{[S]}$.
  On the other hand, if $[S]$ is false, then it  extends {\sf R} and, thus, since $V$ is decidable,  $V \nrhd [S]$.
  Ergo, $V \nvdash  \ccu{[S]}$.
It follows that using the decidability of $V$, we can solve the halting problem. \emph{Quod non.}
 \end{proof}
 
 We remind the reader that $T$ tolerates $U$ iff, for some translation $\tau$, the theory $T+E_U^\tau + U^\tau$ is consistent.
 In other words, $T$ tolerates $U$, if some consistent extension of $T$ interprets $U$.
 We define:
 \begin{itemize}
 \item
 A theory $U$ is \emph{strongly essentially undecidable} iff every theory $T$ that tolerates $U$ is undecidable.
 \end{itemize} 
 If a finitely axiomatised theory is essentially undecidable it is easily seen that is is also strongly essentially undecidable.
 The same does not have to hold for RE theories. See, e.g., \cite{shoe:degr58} for examples. Cobham showed that
 the Tarski-Mostowski-Robinson theory {\sf R} is strongly essentially undecidable. See \cite{vaug:theo62} or  \cite{viss:onq17} . We have:
 
 \begin{theorem} 
 Suppose $U$ is consistent, RE, and effectively Friedman-reflexive. Then, $U$ is strongly essentially undecidable.
 \end{theorem}
 
 \begin{proof}
 Let $U$ be consistent, RE, and effectively Friedman-reflexive.
 Suppose $T$ is decidable and suppose $V := T+E_U^\tau + U^\tau$ is consistent.
 In case $S$ is true, we have $V \vdash \ccdu{\tau}{[S]}$. Now suppose $[S]$ is false.
 In case $V+ \ccdu{\tau}{[S]}$ were consistent, it would follow that  $U$ tolerates $[S]$,
 contradicting the fact that $[S]$ is finitely axiomatised and essentially undecidable.
 So, $V\vdash \neg\, \ccdu{\tau}{[S]}$. Since $V$ is RE, this gives us a procedure to decide
 the halting problem. 
 \end{proof}
 
 \begin{question}\label{q1}
 Is there a theory $U$ that is consistent, effectively Friedman-reflexive and not strongly essentially undecidable?
 \end{question}
 
 \section{The Paradigm Case of Peano Arithmetic}\label{parapea}
 Peano Arithmetic is a paradigmatic theory that is Friedman-reflexive. This is the theory for which Harvey's original 
 observation was made. Our Theorem~\ref{friedmansmurf} is Theorem~2.7 of \cite{frie:aspe21}
 
\begin{theorem}[Friedman]\label{friedmansmurf}
The sentence $\oco_A\top$ is an interpreter of $A$ over {\sf PA}.
So, {\sf PA} is effectively Friedman-reflexive.
\end{theorem}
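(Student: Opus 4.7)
The statement packages two claims: (a) $(\mathsf{PA}+\oco_A\top) \rhd A$, and (b) for every PA-sentence $B$, if $(\mathsf{PA}+B) \rhd A$, then $\mathsf{PA}+B\vdash \oco_A\top$. Claim (a) is an instance of the Interpretation Existence Lemma: since PA extends $\mathsf{S}^1_2$ and $A$ is finitely axiomatised, a Henkin construction carried out inside $\mathsf{PA}+\oco_A\top$ yields the desired interpretation of $A$. Effectiveness of the assignment $A \mapsto \oco_A\top$ is then immediate from standard G\"odel arithmetisation, so there is nothing further to verify for that clause.

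The substance lies in (b). Fix a translation $\tau$ witnessing $(\mathsf{PA}+B) \rhd A$, so that $\mathsf{PA}+B \vdash (E_A \wedge A)^\tau$. Because $A$ is a single fixed sentence, its axioms have syntactic complexity bounded by some specific $n$. PA possesses a partial truth predicate $\mathsf{Tr}_n$ satisfying Tarski's clauses for all sentences of complexity $\le n$; relativising along $\tau$ yields a predicate $\mathsf{Tr}_n^\tau$, and PA proves $(E_A \wedge A)^\tau \to \mathsf{Tr}_n^\tau(\gn{A})$. I then invoke formalised soundness at bounded complexity: any derivation of $\bot$ from $A$ can be converted, provably in PA, by cut elimination into a cut-free derivation whose formulas, being subformulas of axioms of $A$, remain of complexity $\le n$; soundness of such cut-free derivations with respect to $\mathsf{Tr}_n^\tau$ is then an internal induction on derivation length using only the Tarskian clauses at level $n$, and its conclusion is incompatible with $\mathsf{Tr}_n^\tau(\gn{A})$. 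Thus PA proves $\mathsf{Tr}_n^\tau(\gn{A}) \to \oco_A\top$; chaining the two implications inside $\mathsf{PA}+B$ delivers $\oco_A\top$, as required.

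The main obstacle is coordinating three formalisations that live at different levels: the externally fixed bound $n$ on the complexity of $A$'s axioms, the partial truth predicate at that level, and the PA-formalisation of cut elimination (or, equivalently, of the Herbrand expansion) restricted to that level. A cleaner alternative replaces cut elimination by a direct appeal to PA's internal $\Pi_n$-reflection for its own proof predicate, relativised to $\tau$ via $\mathsf{Tr}_n^\tau$; both routes are standard in the arithmetisation literature, but need care because $A$ is external to the formalisation while the quantification over $A$-derivations inside PA ranges over all lengths.
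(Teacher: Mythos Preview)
Your argument is correct, but the paper's proof of (b) takes a different and considerably shorter route. Rather than building a truth predicate for the $\tau$-model and invoking formalised cut elimination, the paper uses compactness plus essential reflexiveness: from $(\mathsf{PA}+B)\rhd A$ one extracts a finite subtheory $D\subseteq\mathsf{PA}$ with $(D+B)\rhd A$; this is a true $\Sigma^0_1$ fact, so $\mathsf{PA}\vdash (D+B)\rhd A$, whence $\mathsf{PA}\vdash \oco_D B \to \oco_A\top$. Essential reflexiveness of $\mathsf{PA}$ (reflection over each finite subtheory) then gives $\mathsf{PA}+B\vdash \oco_D B$, and the two implications combine.

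The difference is one of packaging. Your truth-predicate-plus-cut-elimination argument is essentially what underlies the proof of essential reflexiveness itself, so you are unpacking a black box the paper simply cites. The paper's approach is more modular and, as becomes visible later (Theorem~\ref{erfr}), it is this abstract shape---finite subtheory, formalised interpretability, sentential reflection---that generalises directly to any essentially sententially reflexive base, not just $\mathsf{PA}$. Your route, by contrast, makes the dependence on $\mathsf{PA}$'s strength (totality of superexponentiation for cut elimination, full induction for the soundness lemma) explicit, which is informative but less portable. One minor point of phrasing: what you call ``relativising $\mathsf{Tr}_n$ along $\tau$'' is really constructing a fresh satisfaction predicate for $A$-formulas of bounded depth with atomic clauses supplied by $\tau$; it is not the $\tau$-translate of a $\mathsf{PA}$-language truth predicate.
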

 
 \begin{proof}
 Suppose $({\sf PA}+B) \rhd A$. Then, for some finite sub-theory $D$ of {\sf PA}, we have $(D+B)\rhd A$. If follows that
 ${\sf PA} \vdash ((D+B) \rhd A)$ and, so, ${\sf PA} \vdash \oco_DB \to \oco_A\top$. By the essential reflexiveness of
 {\sf PA}, we may conclude ${\sf PA}+B \vdash \oco_A\top$.
 
 For the other direction, suppose ${\sf PA}+B \vdash \oco_A\top$. Then, by the Interpretation Existence Lemma (see \cite{viss:inte18}), 
 we find $({\sf PA}+B)\rhd A$. 
 \end{proof}
 
 We note that we have characterised the consistency statement $\oco_A\top$ among arihmetical sentences modulo {\sf PA}-provable equivalence.
We will later discuss how to improve this to {\sf EA}-provable equivalence---see Theorem~\ref{pichar}.

\section{Closure Properties}
In this section, we study various closure properties of Friedman reflexive theories.

\begin{theorem}\label{upsmurf}
Suppose $U \subseteq V$, where $V$ is in the same language as $U$, and $U$ is \sbra{effectively} Friedman-reflexive.
Then, $V$ is \sbra{effectively} Friedman-reflexive with the same interpreters. In other words, \ccz\ can be chosen the same for $U$ and $V$.
\end{theorem}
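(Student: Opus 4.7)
The plan is to show that the function ${\sf C}$ witnessing the Friedman-reflexivity of $U$ also witnesses the Friedman-reflexivity of $V$. So fix a finitely axiomatised $A$, let $C := {\sf C}(A)$, and let $B$ be any sentence in the shared language of $U$ and $V$; the goal is to verify that $(V+B) \rhd A$ iff $V+B \vdash C$.

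For the easy direction, suppose $V+B \vdash C$. Since $C$ is an interpreter of $A$ over $U$, we have $(U+C) \rhd A$, witnessed by some translation $\tau$ with $U+C \vdash (E_A \wedge A)^\tau$. Because $V$ extends $U$ in the same language, the same $\tau$ shows $(V+C) \rhd A$, and since $V+B \vdash C$, we get $(V+B) \rhd A$.

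For the other direction, suppose $(V+B) \rhd A$, via some translation $\tau$, so $V+B \vdash (E_A \wedge A)^\tau$. Since this is a single sentence, by compactness only finitely many $V$-axioms $\sigma_1,\dots,\sigma_k$ together with $B$ are used in its proof. Let $B' := \sigma_1 \wedge \cdots \wedge \sigma_k \wedge B$. Because $V$ shares its language with $U$, the sentence $B'$ is in the language of $U$, and $U+B' \vdash (E_A \wedge A)^\tau$, so $(U+B') \rhd A$. By the Friedman-reflexivity of $U$, we conclude $U+B' \vdash C$, and therefore $V+B \vdash C$ since $V+B \vdash B'$.

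The effective case is immediate: the very same recursive function ${\sf C}$ serves $V$. There is no real obstacle here; the only point requiring care is that $V$ is assumed to be in the same language as $U$, which is precisely what lets us treat a $V$-sentence as a $U$-sentence when appealing to the interpreter property of ${\sf C}$ over $U$.
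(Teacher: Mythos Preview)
Your proof is correct and follows essentially the same approach as the paper. The paper presents the argument as a compact chain of equivalences, passing through ``for some $D$ with $V\vdash D$, we have $(U+(D\wedge B))\rhd A$'' and using the interpreter property of $U$ at the middle step; your version unpacks the two directions separately and makes the compactness step explicit, but the content is the same.
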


\begin{proof}
Suppose $U \subseteq V$ and $U$ is Friedman-reflexive. 
We have:
\begin{eqnarray*} 
(V+B) \rhd A & \Iff & \text{for some $D$, we have $V \vdash D$ and $(U+(D\wedge B)) \rhd A$} \\
& \Iff & \text{for some $D$, we have $V \vdash D$ and $(U+(D\wedge B)) \vdash \ccu A$}  \\
& \Iff & (V+B) \vdash \ccu A
\end{eqnarray*}
Since \ccz\ is preserved, we, \emph{ipso facto}, preserve effectivity.
\end{proof}

We note that polyglotticity is definitely \emph{not} preserved to consistent extensions.

\begin{theorem}\label{kwaaksmurf}
Suppose $U$ is  Friedman-reflexive. Suppose $A$ is consistent and $U \nrhd A$.
Then, $U+ \neg \ccu A$ is consistent and  not polyglot.
\end{theorem}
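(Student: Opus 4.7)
\emph{Proof plan.} The statement splits cleanly into two parts, and both rest on the interpreter property of ${\sf C}(A)$ plus the closure result Theorem~\ref{upsmurf}.

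For consistency of $U+\neg\,{\sf C}(A)$, I would argue contrapositively: if $U \vdash {\sf C}(A)$, then since $(U+{\sf C}(A)) \rhd A$ by the defining property of an interpreter, we would have $U \rhd A$, contradicting the hypothesis $U \nrhd A$. Hence $U \nvdash {\sf C}(A)$, so $U+\neg\,{\sf C}(A)$ is consistent.

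For non-polyglotticity, the natural witness is $A$ itself: it is finitely axiomatised and consistent by hypothesis. By Theorem~\ref{upsmurf}, the extension $V := U+\neg\,{\sf C}(A)$ is again Friedman-reflexive with the \emph{same} interpreter function, so ${\sf C}(A)$ is still an interpreter of $A$ over $V$. Now if $B$ were any pro-interpreter of $A$ over $V$, i.e.\ $(V+B) \rhd A$, then by the interpreter property $V+B \vdash {\sf C}(A)$. But $V$ already contains $\neg\,{\sf C}(A)$, so $V+B$ is inconsistent. Thus no pro-interpreter of $A$ over $V$ is consistent with $V$, showing $V$ is not polyglot.

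No step looks like a real obstacle; the only subtlety is making sure the definitions line up, in particular that ``pro-interpreter of $A$ over $V$'' refers to sentences $B$ with $(V+B) \rhd A$, so that Theorem~\ref{upsmurf} applies to collapse the question back to $U$-provability of ${\sf C}(A)$. The whole argument is then a one-line application of the universal property combined with the inherited Friedman-reflexivity of $V$.
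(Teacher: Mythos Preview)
Your proposal is correct and follows essentially the same approach as the paper: both parts use the interpreter property of ${\sf C}(A)$ together with Theorem~\ref{upsmurf} to transfer the same interpreter function to $V=U+\neg\,{\sf C}(A)$, and then observe that ${\sf C}(A)$---hence every pro-interpreter of $A$---is inconsistent with $V$. The paper's proof is merely a terser version of what you wrote.
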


\begin{proof}
We assume the conditions of the theorem.
Since $U \nrhd A$, we have $U \nvdash \ccu A$ and, so, $U+\neg \ccu A$ is consistent.

By Theorem~\ref{upsmurf}, we may choose \ccz\ for $U+ \neg \ccu A$ the same as for $U$,
but, $\ccu A$ is inconsistent with  $U+ \neg \ccu A$.
\end{proof}
An example of our theorem is the fact that no consistent extension in the same language of ${\sf PA}+ \opr_{{\sf ACA}_0}\bot$ interprets ${\sf ACA}_0$.

Our next closure property is categorical in nature.
Let $\mathbb E := {\sf INT}^+_3$ be the category of theories in finite signature and interpretations, where  
two interpretations $K,K': T \to W$ are the same iff, for all
$T$-sentences $A$, we have $W \vdash A^K \iff A^{K'}$. We note that we do not demand that the theories are RE.
In a sense, $\mathbb E$ in combination with its sub-category of all theories with as arrows 
theory-extensions-in-the-same-language is the natural home for the study of Friedman-reflexivity.\footnote{For the case of finitely axiomatised theories,
the interaction between interpretation and extension was studied in \cite{viss:exte21}.}

\begin{theorem}\label{terugtreksmurf}
Suppose $V$ is an $\mathbb E$-retract of $U$ and
suppose  $U$ is \sbra{effectively} Friedman-reflexive.
Then, $V$ is \sbra{effectively} Friedman-reflexive. 
\end{theorem}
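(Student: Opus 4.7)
The plan is to transport the interpreter function from $U$ back to $V$ along the retraction. Let $K : V \to U$ and $L : U \to V$ be interpretations witnessing the $\mathbb E$-retract structure, so that $L \circ K = \mathrm{id}_V$ in $\mathbb E$; concretely this means that $V \vdash \varphi \iff (\varphi^K)^L$ for every $V$-sentence $\varphi$. Given a finitely axiomatised $A$, I would simply define ${\sf C}_V(A) := ({\sf C}_U(A))^L$, which is a $V$-sentence because $L$ is an interpretation of $U$ in $V$.

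For the easy direction, suppose $V+B \vdash ({\sf C}_U(A))^L$. Since $L$ already interprets all $U$-axioms in $V$, it extends to an interpretation of $U+{\sf C}_U(A)$ in $V+B$; composing with the witness of $U+{\sf C}_U(A) \rhd A$ guaranteed by the Friedman-reflexivity of $U$ yields $(V+B) \rhd A$.

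For the reverse direction, suppose $(V+B) \rhd A$. Applying $K$, which interprets $V$ in $U$, gives $(U+B^K) \rhd (V+B) \rhd A$. Friedman-reflexivity of $U$ then yields $U+B^K \vdash {\sf C}_U(A)$. Applying $L$ (which interprets $U$ in $V$) delivers $V+(B^K)^L \vdash ({\sf C}_U(A))^L$. The retract identity $L \circ K = \mathrm{id}_V$ tells us that $V \vdash B \iff (B^K)^L$, so $V+B \vdash ({\sf C}_U(A))^L = {\sf C}_V(A)$, as required. Effectiveness is then immediate: if ${\sf C}_U$ is recursive, so is $A \mapsto ({\sf C}_U(A))^L$, since the translation underlying the fixed interpretation $L$ is a computable function on formulas.

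The main obstacle is really just bookkeeping: keeping the variances of $K$ and $L$ straight, and invoking the retract identity at the very last step to collapse $(B^K)^L$ back to $B$. Conceptually, this is simply the standard category-theoretic transport of a universal arrow along a retract, mirroring the pattern in Remark~\ref{cateen} and Theorem~\ref{upsmurf}.
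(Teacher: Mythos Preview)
Your proof is correct and follows essentially the same approach as the paper: define ${\sf C}_V(A) := ({\sf C}_U(A))^L$, push $B$ through $K$, apply Friedman-reflexivity of $U$, pull back through $L$, and use the retract identity to collapse $(B^K)^L$ to $B$. The paper presents the argument as a single cyclic chain of implications rather than splitting it into two directions, but the content is identical.
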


\begin{proof}
Let $K:V \to U$ and $M:U \to V$ witness the retraction.
So, $M\circ K$ is the same, in the sense of $\mathbb E$, as ${\sf Id}_V$.
We have:
\begin{eqnarray*} 
(V+B) \rhd A & \To &  (U+B^K) \rhd A \\
& \To & U+ B^K \vdash \ccu A \\
 & \To & V+B^{KM} \vdash \ccdu M A \\
  & \To & V+B \vdash \ccdu MA \\
  & \To & (V+B) \rhd (U+\ccu A) \\
  & \To & (V+B) \rhd A
\end{eqnarray*}

\noindent
It follows that $\ccdu MA$ is the desired interpreter of $A$ over $V$. Clearly, composition of \ccz\ with $(\cdot)^M$ preserves
effectiveness.
\end{proof}

\begin{theorem}
Suppose $U+D$ and $U+\neg\, D$ are both \textup(effectively\textup) Friedman-reflexive. Then, $U$ is also \textup(effectively\textup) Friedman-reflexive.
\end{theorem}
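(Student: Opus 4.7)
Write ${\sf C}_D$ and ${\sf C}_{\neg D}$ for the interpreter functions of $U+D$ and $U+\neg\, D$ respectively. The plan is, given a finitely axiomatised $A$, to take
\[{\sf C}(A) \;:=\; (D \to {\sf C}_D(A)) \wedge (\neg\, D \to {\sf C}_{\neg D}(A))\]
as the candidate interpreter of $A$ over $U$, and then to verify the two directions of the defining biconditional.

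The easy direction is: if $(U+B) \rhd A$, then both $(U+D+B) \rhd A$ and $(U+\neg\, D+B) \rhd A$, so by Friedman-reflexivity of the two extensions, $U+D+B \vdash {\sf C}_D(A)$ and $U+\neg\, D+B \vdash {\sf C}_{\neg D}(A)$, giving $U+B \vdash {\sf C}(A)$. The hard direction is the converse: assuming $U+B \vdash {\sf C}(A)$, we get $(U+D+B) \rhd A$ and $(U+\neg\, D+B) \rhd A$, and we must glue these two interpretations into a single interpretation of $A$ in $U+B$.

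The main obstacle is precisely this glueing. The plan is to perform a case-split by the sentence $D$ inside the interpretation. Let $K_1$ and $K_2$ be the witnessing interpretations of $A$ in $U+D+B$ and $U+\neg\, D+B$, with domain formulas $\delta_1(x)$ and $\delta_2(x)$, and, for each relation symbol $R$ of $A$, translations $R^{K_1}$ and $R^{K_2}$. Define a new interpretation $K$ of $A$ in $U+B$ by taking as domain
\[\delta(x) \;:\Iff\; (D \wedge \delta_1(x)) \vee (\neg\, D \wedge \delta_2(x)),\]
and, for each symbol $R$,
\[R^K(\vec x) \;:\Iff\; (D \wedge R^{K_1}(\vec x)) \vee (\neg\, D \wedge R^{K_2}(\vec x)).\]
Since $D$ is a sentence, in any model of $U+B$ exactly one of $D$ or $\neg\, D$ holds, so $K$ collapses to $K_1$ or to $K_2$ on that model; routine checks, by induction on formulas and on axioms of $A$, show $K$ is a legitimate interpretation of $A$ in $U+B$. (If one allows parameters or multi-dimensional interpretations, the same gluing works componentwise.)

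Finally, for the effective part: if ${\sf C}_D$ and ${\sf C}_{\neg D}$ are both recursive, the map $A \mapsto (D \to {\sf C}_D(A)) \wedge (\neg\, D \to {\sf C}_{\neg D}(A))$ is manifestly recursive, so effective Friedman-reflexivity transfers as claimed.
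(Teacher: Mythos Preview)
Your proof is correct and essentially identical to the paper's. The paper uses the same candidate interpreter (written in the equivalent form $(D\wedge C)\vee(\neg D\wedge C')$) and the same chain of equivalences; it simply treats the gluing step $(U+D+B)\rhd A$ and $(U+\neg D+B)\rhd A$ $\Rightarrow$ $(U+B)\rhd A$ as immediate, since piecewise interpretations are part of the standing framework, whereas you spell out the construction of the case-split interpretation explicitly.
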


\begin{proof}
Consider any $A$ and let $C$ and $C'$ be the interpreters of $A$ over $U+D$, resp. $U+\neg D$. Let $C\tupel{D} C'$ be
$(D\wedge C) \vee (\neg\, D \wedge C')$.
We have:\qedright
\begin{eqnarray*}
(U+B) \rhd A & \Iff & (U+D+B) \rhd A \text{ and } (U+\neg\, D+B) \rhd A \\
& \Iff & (U+D+B) \vdash C \text{ and } (U+\neg\, D+B) \vdash C' \\
& \Iff &  (U+D+B) \vdash C\tupel{D}C' \text{ and } (U+\neg\, D+B) \vdash C \tupel{D} C' \\
& \Iff & (U+B) \vdash C\tupel{D}C'
\end{eqnarray*}
\end{proof}

In the next two theorems, we verify general insights for universal arrows in our
specific case.

Given theories $U$ and $V$, we define $W:= U \ovee V$ as follows. The signature of $W$ is the
disjoint union of the signatures of $U$ and $V$ plus a new 0-ary predicate symbol $P$.
The axioms of $W$ are $P \to A$, for axioms $A$ of $U$ and $\neg P \to B$ for axioms $B$ of $V$.
We have: 
\begin{theorem}
Suppose $U$ and $V$ are  \sbra{effectively}\ Friedman-reflexive. Then, $U\ovee V$ is \sbra{effectively} Friedman-reflexive.
\end{theorem}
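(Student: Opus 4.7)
I propose $C := (P \to C_U) \wedge (\neg P \to C_V)$ as the interpreter of $A$ over $U \ovee V$, with $C_U := {\sf C}_U(A)$ and $C_V := {\sf C}_V(A)$ the interpreters of $A$ over $U$ and $V$ provided by their Friedman-reflexivity; effectiveness transfers in the obvious way. The verification will follow the case-splitting template of the preceding theorem.

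For the $(\Leftarrow)$ direction, assume $(U \ovee V + B) \vdash C$. Under $P$, this yields $C_U$ together with the $U$-axioms (which hold under $P$), and so provides an $L(U)$-translation $\tau_U$ witnessing $A \lhd U + C_U$; symmetrically $\tau_V \in L(V)$ under $\neg P$. I glue them by case-splitting each clause on $P$, for instance $\delta_\tau(x) := (P \wedge \delta_{\tau_U}(x)) \vee (\neg P \wedge \delta_{\tau_V}(x))$, and analogously for the other clauses, giving a single $L(U \ovee V)$-translation $\tau$ with $(U \ovee V + B) \vdash A^\tau$ by case analysis on $P$.

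For the $(\Rightarrow)$ direction, given a witness $\tau$ for $(U \ovee V + B) \rhd A$, I split on $P$ and aim to show $(U \ovee V + B + P) \vdash C_U$ together with its symmetric counterpart for $\neg P$. Applying the trivialising interpretation $M : (U \ovee V) + P \to U$ that sends $V$-symbols to trivial values and $P \mapsto \top$ (extended to incorporate $B$) yields $(U + M(B)) \rhd A$ via $M \circ \tau$, whence $U \vdash M(B) \to C_U$ by Friedman-reflexivity of $U$; lifting via the canonical $K : U \to (U \ovee V) + P$ gives $(U \ovee V + P) \vdash M(B) \to C_U$.

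The main obstacle will be closing the gap between $M(B)$ and $B$ inside $(U \ovee V) + P$, since the $V$-symbols can behave non-trivially in models of $B$ and so $M(B)$ need not follow from $B$. I expect this to be handled either by running the trivialisation argument uniformly over every $L(U)$-definable assignment to the $V$-symbols and exploiting their freedom in $(U \ovee V) + P$, or by invoking the preceding disjunction theorem with $D := P$ after a separate argument that Friedman-reflexivity is preserved when passing from $U$ to $U$ augmented by the free $V$-signature. The symmetric argument handles the $\neg P$-case, and stringing the equivalences together yields $(U \ovee V + B) \rhd A \iff (U \ovee V + B) \vdash C$ exactly as in the chain of iffs in the preceding theorem.
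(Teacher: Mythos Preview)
Your proposed interpreter $(P \to C_U) \wedge (\neg P \to C_V)$ is exactly the paper's ${\sf C}_U(A)\tupel{P}{\sf C}_V(A)$, and your case-split on $P$ is the paper's strategy. The paper compresses the whole argument into the chain
\begin{eqnarray*}
(W+B) \rhd A & \Iff & (U+B[P:= \top]) \rhd A\text{ and } (V+B[P:= \bot]) \rhd A \\
& \Iff & (U+B[P:= \top]) \vdash {\sf C}_U(A) \text{ and } (V+B[P:= \bot]) \vdash {\sf C}_V(A)\\
& \Iff & W + B \vdash {\sf C}_U(A)\tupel{P}{\sf C}_V(A),
\end{eqnarray*}
where the syntactic substitution $B \mapsto B[P:=\top]$ plays the role of your map $M$ (except that the paper only substitutes for $P$, not for the $V$-symbols).

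You are right to isolate the ``main obstacle'', and in fact you have been more careful here than the paper. After replacing $P$ by $\top$, the sentence $B[P:=\top]$ still lives in $\Sigma_U \cup \Sigma_V$, not in $\Sigma_U$; so the middle $\Iff$ above is an appeal to Friedman-reflexivity of $U$ \emph{in the extended signature}. The paper writes this step without comment, but it is not automatic. Take $U$ to be the complete theory of an infinite pure set (Friedman-reflexive by completeness) and let the $V$-signature be arithmetical; with $B[P:=\top]$ equal to Robinson's {\sf Q} written in the $V$-symbols one has $(U + B[P:=\top]) \rhd {\sf Q}$ via the identity, yet $U + B[P:=\top]$ is consistent while ${\sf C}_U({\sf Q}) = \bot$ (since $U$ is decidable and {\sf Q} essentially undecidable). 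So the middle $\Iff$ fails as stated. Your second proposed repair---showing that Friedman-reflexivity is preserved under free signature expansion---is exactly what the paper is tacitly assuming, and neither you nor the paper supplies that argument; your first repair (substituting $U$-definable values for the $V$-symbols) runs into the usual schema-versus-universal gap and the same example blocks it. In short, you have located a genuine lacuna shared with the paper rather than introduced one of your own.
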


\begin{proof}
Let $W:= U \ovee V$.
We have:
\begin{eqnarray*}
(W+B) \rhd A & \Iff & (U+B[P:= \top]) \rhd A\text{ and } (V+B[P:= \bot]) \rhd A \\
& \Iff & U+B[P:= \top] \vdash \ccd{(U)}A \text{ and } V+B[P:= \bot] \vdash \ccd{(V)}A\\
& \Iff & W + B \vdash (\ccd{(U)}A)\tupel{P}(\ccd{(V)}A)  
\end{eqnarray*}
So, we can take $\ccd{(W)}A :=(\ccd{(U)}A)\tupel{P}(\ccd{(V)}A)$.
\end{proof}

 \section{Interpreter Logics}\label{intlog}
 Can something like a modal logic be based on interpreters as an analog of provability logic? 
 Since we only consider interpreters for finitely axiomatised theories, this should be a modal logic
 interpreted in a finitely axiomatised theory. We first give the definitions and then some motivating remarks.
 
 \subsection{Definitions}
 An \emph{FM-frame} is a pair $\tupel{A,U}$, where $A$ is finitely axiomatised, $U$ is Friedman-reflexive and $A \rhd U$.\footnote{`FM'
 stands for `Feferman and Montague' who initiated the idea of looking at the combination of G2 and interpretability.}
 The interpretation $K:A \rhd U$, where $\tupel{A,U}$ is an FM-frame, is an FM-interpretation. 
 We consider $U$ and $A$ as part of the data for $K$.

Consider an FM-interpretation $K:U \lhd A$.
 We define $\graydi_{K,B} C := \cct{K}{(U),B}C$. 
 Our default case is where $A $ is identical to $B$. In this case we write $\graydi_{K} C$. 
Also, in many cases, we treat $K$ as contextually given and simply write $\graydi$. As usual, we set $\graysq := \neg \graydi\neg$. 

We consider the usual modal language with possibility operator $\oco$ and with necessity  defined by $\neg\oco\neg$.
Let $\sigma$ be a function from the propositional atoms to the $A$-language. We define $\phi^{(\sigma,K)}$ as follows.
\begin{itemize}
\item
$p^{(\sigma,K)} := \sigma(p)$.
\item
$(\cdot)^{(\sigma,K)}$ commutes with the truth-functional connectives.
\item
$(\oco \psi)^{(\sigma,K)} := \graydi_{K,A}\psi^{(\sigma,K)}$.
\end{itemize}
 
 We will call the logic of $\graydi_{K,A}$ over $A$: $\Lambda^{\sf fr}_K$.
 So, 
 \begin{itemize}
 \item
$\Lambda^{\sf fr}_K = \verz{\phi \mid \text{ for all $\sigma$, we have }A \vdash \phi^{(\sigma,K)}}$.
 \end{itemize}
 
 We also define the logic of a frame $\tupel{A,U}$.
   \begin{itemize}
 \item
 $\Lambda^{\sf fr}_{A,U} = \verz{\phi \mid \text{ for all $M: A\rhd U$ and $\sigma$, we have }A \vdash \phi^{(\sigma,M)}}$.\\
 In other words,  $\Lambda^{\sf fr}_{A,U} = \bigcap_{M:A\rhd U}\Lambda^{\sf fr}_M$. 
 \end{itemize}
 
 We consider one further notion in Appendix~\ref{funo}.
 
 \subsection{Motivating Remarks}\label{moresmurf}
Let us first think about ordinary provability logic.
What is the provability logic of a given theory $V$?
The arithmetisation of provability is provided by some base-theory {\sf B}.
Let us say this is $\opr_\alpha B$, where $\alpha$ is a suitable presentation of the
axioms of $V$. The base theory is `in' $V$ via an interpretation $K:V \rhd {\sf B}$.
So, $V$-provability gets the form $\opr^K_\alpha B$  in $V$.

If we switch to interpreter logic, the idea is precisely the same: the necessity operator gets
the form $\ppt K A B$, for main theory $A$.
We note that here we have $K: A \rhd {\sf B}$, on the one hand,  and that, on the other hand, the $\ccd AB$ are defined using interpretations
of $(A \wedge B)$ in finite extensions of  {\sf B}. So, both an interpretation \emph{of {\sf B} in $A$}  and interpretations of extensions of
$A$ \emph{in} extensions of {\sf B} play a role.

Both in the case of ordinary provability logic and of interpreter logic, we can quantify out the
interpretations of the base theory {\sf B} in the main theory $V$, resp. $A$.
This leads to the frame provability/interpreter logic. A frame is the pair $\tupel{A,U}$ with
$A \rhd U$. So, we abstract away from the specific interpretation.
The ordinary provability logic of a frame
was studied in \cite{viss:arit15}. In the present paper, we will consider the interpreter logic of a frame.

 \subsection{The L\"ob Conditions}
 
 We will show that every $\Lambda^{\sf fr}_K$ satisfies the L\"ob Conditions, in other words,
 it is a normal modal logic extending {\sf K}4.
 
 We will first verify a set of conditions that are equivalent to the L\"ob Conditions and then
 prove the equivalence.
  
  \begin{lemma}\label{loebsmoerf}
  Let $K:U\lhd A$ be an FM-interpretation. We write $\graydi := \graydi_{K,A}$.
We have:
\begin{enumerate}[a.]
\item
$A \vdash B\to C$ implies $A \vdash \graydi B \to \graydi C$ and $A \vdash \graysq B \to \graysq C$. 
\item
$A \vdash \neg\graydi\bot$ and $A \vdash \graysq \top$.  
\item
$A \vdash \graydi(B\vee C) \iff (\graydi B \vee \graydi C)$ and $A \vdash \graysq(B\wedge C) \iff (\graysq B \wedge \graysq C)$
\item
 $A \vdash \graydi\graydi B \to \graydi B$ and
 $A \vdash \graysq B \to \graysq\graysq B$.
\end{enumerate}
\end{lemma}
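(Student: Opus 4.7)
The plan is to reduce each of the four claims to a property of the interpreter map $\sf C$ over $U$ (using Theorems~\ref{minismurf} and \ref{entwederodersmurf}) and then push the resulting $U$-provable statement forward along the interpretation $K:U\lhd A$, using that any $U$-theorem has its $K$-translation provable in $A$. The $\graysq$-halves of (a)--(d) will all follow from the corresponding $\graydi$-halves by the definition $\graysq := \neg\graydi\neg$ and propositional reasoning, so the work is concentrated on the $\graydi$-forms.

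For (a), from $A\vdash B\to C$ I obtain $A\wedge B \vdash A\wedge C$, hence $(A\wedge B)\rhd (A\wedge C)$ via the identity interpretation. Theorem~\ref{minismurf} then gives $U\vdash {\sf C}(A\wedge B)\to {\sf C}(A\wedge C)$, and translating along $K$ produces $A\vdash \graydi B \to \graydi C$. For (b), note that $A\wedge\bot$ is the inconsistent theory in the signature of $A$, and any $U+B$ interpreting it is inconsistent, so by the argument of Theorem~\ref{entwederodersmurf} we have $U\vdash \neg\,{\sf C}(A\wedge\bot)$; translating along $K$ yields $A\vdash\neg\graydi\bot$, and dually $A\vdash\graysq\top$. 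For (c), I use that $A\wedge(B\vee C)$ and $(A\wedge B)\vee(A\wedge C)$ are propositionally equivalent in the same signature, so Theorem~\ref{entwederodersmurf} applies to give $U \vdash {\sf C}(A\wedge(B\vee C)) \iff {\sf C}(A\wedge B)\vee{\sf C}(A\wedge C)$; translating along $K$ delivers the disjunction principle, whence the conjunction principle for $\graysq$ follows by duality.

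The substantive part is (d), the transitivity of $\graydi$. The key observation is that
\[ (A\wedge \graydi B) \rhd (A\wedge B).\]
Indeed, $\graydi B$ is by definition ${\sf C}(A\wedge B)^K$, so $K:U\lhd A$ lifts to an interpretation $K:(U+{\sf C}(A\wedge B))\lhd (A\wedge \graydi B)$; composing this with the interpretation $(U+{\sf C}(A\wedge B))\rhd (A\wedge B)$ guaranteed by the definition of an interpreter yields the desired $(A\wedge\graydi B)\rhd (A\wedge B)$. Theorem~\ref{minismurf} then gives $U\vdash {\sf C}(A\wedge\graydi B) \to {\sf C}(A\wedge B)$, and translating along $K$ produces $A\vdash \graydi\graydi B\to \graydi B$. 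The $\graysq$-form follows by contraposition.

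The main obstacle I expect is pinpointing the right composition in (d): one must see that the sentence added to $A$ on the left-hand side is precisely the $K$-translate of an interpreter, so that $K$ itself extends to an interpretation of $U+{\sf C}(A\wedge B)$ and hence, by composition with the universal arrow defining ${\sf C}(A\wedge B)$, of $A\wedge B$. Everything else is bookkeeping that rides on the two earlier theorems and on the functoriality of translation along $K$.
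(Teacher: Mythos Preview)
Your proposal is correct and follows essentially the same route as the paper: both reduce each clause to a property of ${\sf C}$ over $U$ (via Theorems~\ref{minismurf} and~\ref{entwederodersmurf}) and then push along $K$. Your (d) is exactly the paper's chain $(U+{\sf C}(A\wedge\graydi B))\rhd(A+\graydi B)\rhd(U+{\sf C}(A\wedge B))\rhd(A\wedge B)$, only you package the first and last steps into a direct appeal to Theorem~\ref{minismurf} after establishing $(A\wedge\graydi B)\rhd(A\wedge B)$; the paper unfolds the same composition explicitly. The one nicety the paper spells out and you leave implicit is that the duality between $\graydi$ and $\graysq$ is not purely propositional: $\neg\graysq\neg B$ is literally $\neg\neg\graydi\neg\neg B$, and one needs clause~(a) to cancel the inner double negation before the $\graysq$-forms follow---but since you establish (a) first, this is harmless.
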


\begin{proof}
Ad (a). Suppose $A \vdash B\to C$. Then, \[(U + \ccd AB) \rhd (A\wedge B)  \rhd (A\wedge C).\]
So, $U + \ccd AB \vdash \ccd AC$, and, thus, $A \vdash \graydi B \to \graydi C$.

We note that $\neg\graysq\neg B$ is $\neg\neg \graydi \neg\neg B$. So, by the first conjunct of (a), we have
$U \vdash \graydi B \iff \neg\graysq\neg B$. As a consequence, we may switch between $\graydi$-versions of principles
 and their dual $\graysq$-formulations
 in a confident way. Thus, we omit the verification of the second conjuncts of (a-d).

Principles (b) and (c) are immediate from Theorem~\ref{entwederodersmurf}. 

Ad (d).
We have \[  (U+ \ccd A{\graydi B}) \rhd (A + \graydi B) \rhd (U+ \ccd AB) \rhd (A \wedge B).\]
So, $U+ \ccd A {\graydi B} \vdash \ccd AB$. If follows that $A \vdash \graydi\graydi B \to \graydi B$.
\end{proof}
 
 \begin{theorem}\label{lobsmurf}
 Let $K:U\lhd A$ be an FM-interpretation. Then, $\Lambda^{\sf fr}_{K}$ is a normal modal logic extending
 {\sf K4}, in other words, $\Lambda^{\sf fr}_{K}$ satisfies the L\"ob Conditions.
 \end{theorem}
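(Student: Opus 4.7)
The plan is to unpack what it means for $\Lambda^{\sf fr}_K$ to be a normal modal logic extending \textsf{K4} and match each requirement with the appropriate clause of Lemma~\ref{loebsmoerf}. A normal modal logic is axiomatised by: (i) all propositional tautologies together with modus ponens, (ii) the rule of necessitation ($\vdash \phi$ yields $\vdash \graysq \phi$), and (iii) the distribution axiom $\mathsf{K}$: $\graysq(\phi \to \psi) \to (\graysq \phi \to \graysq \psi)$. To extend to \textsf{K4}, one additionally requires the transitivity axiom $\graysq \phi \to \graysq\graysq \phi$. So I would aim to derive necessitation, $\mathsf{K}$, and $4$ from clauses (a)--(d) of the lemma.

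For necessitation, suppose $A \vdash \phi$. Then $A \vdash \top \to \phi$, so by the $\graysq$-half of clause (a) we get $A \vdash \graysq \top \to \graysq \phi$. Combined with $A \vdash \graysq \top$ from clause (b), this yields $A \vdash \graysq \phi$. For the $\mathsf{K}$ axiom, observe that $\graysq \phi \wedge \graysq(\phi \to \psi) \iff \graysq(\phi \wedge (\phi \to \psi))$ is a direct instance of the $\graysq$-half of clause (c). Since $A \vdash \phi \wedge (\phi \to \psi) \to \psi$, clause (a) gives $A \vdash \graysq(\phi \wedge (\phi \to \psi)) \to \graysq \psi$. Chaining these two facts and rearranging with propositional logic produces $A \vdash \graysq(\phi \to \psi) \to (\graysq \phi \to \graysq \psi)$. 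The transitivity axiom is precisely the $\graysq$-half of clause (d).

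Finally, I would note that the arithmetical/interpretability interpretation $\phi \mapsto \phi^\ast$ of modal formulae that underlies $\Lambda^{\sf fr}_K$ respects boolean connectives by construction and sends $\graysq \phi$ to $\neg \graydi \neg \phi^\ast$; so the above $A$-derivable schemata are exactly the axioms and rules we need under that translation. All of this is essentially bookkeeping, and I do not expect any real obstacle: the work is already done in Lemma~\ref{loebsmoerf}, and the theorem is merely a repackaging of (a)--(d) into the standard presentation of \textsf{K4}. The only small care-point is to use the $\graysq$-halves of (a)--(d) uniformly (permitted because, as noted inside the proof of Lemma~\ref{loebsmoerf}, $\graydi$ and $\neg \graysq \neg$ are $U$-provably equivalent), and to remember that necessitation is a derivation rule rather than an internal $A$-implication.
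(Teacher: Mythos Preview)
Your proof is correct and follows essentially the same route as the paper: necessitation from (a) and (b), the {\sf K}-axiom from (a) together with the $\graysq$-conjunct of (c), and transitivity directly from (d). The paper's version is only slightly more terse, but the derivations are identical in substance.
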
 
 
 \begin{proof}
 It is sufficient to show that (a-d) of Lemma~\ref{loebsmoerf} imply the L\"ob Conditions.
 clearly, {\sf L}1 or necessitation follows from (a,b). 
 
 We verify {\sf L}2. We have $A \vdash (B \wedge (B\to C)) \to C$. Ergo, by (a), we find
 $A \vdash \graysq(B \wedge (B\to C)) \to \graysq C$. Applying the second conjunct of (c), we obtain
  $A \vdash (\graysq B \wedge \graysq(B\to C)) \to \graysq C$.
  
  Finally, {\sf L}3 is identical to (d).
 \end{proof}
 
 \noindent
 We note that, conversely, the L\"ob Conditions imply (a-d) of Lemma~\ref{loebsmoerf}.
 
We will see that it is possible to get as extensions L\"ob's Logic and {\sf K}45 and {\sf S}4. 

\subsection{L\"ob's Logic}
In what circumstances we do have interpreter logics that extend L\"ob's Logic {\sf GL}? We provide a
basic result concerning that question.

We say that theories $U$ and $V$ are \emph{reconcilable} iff there are consistent finite extensions-in-the-same-language
$U'\supseteq U$ and $V' \supseteq V$, such that $U'\mutint V'$. The theories $U$ and $V$ are \emph{irreconcilable} iff they are
not reconcilable.

The simplest case of irreconcilability is when one of $U$, $V$ is inconsistent.
The second simplest case is when one of $U$, $V$ is RE and essentially undecidable and the
other is decidable and complete. Theorem~\ref{smartsmurf} will tell us that, if  one of $A$, $U$  of an FR-frame $\tupel{A,U}$ is sequential, then 
$A$ and $U$ are irreconcilable. The next theorem directly connects irreconcilability to L\"ob's Principle. 
\begin{theorem}\label{whenloeb}
Consider an FM-frame $\tupel{A,U}$.
 Then,  $A$ and $U$ are irreconcilable iff $\Lambda^{\sf fr}_{A,U}$ extends L\"ob's Logic. 
\end{theorem}

\begin{proof}
Let $\tupel{A,U}$ be an FM-frame.

\emph{We prove the left-to-right direction.} Suppose $A$ and $U$ are irreconcilable. 
 Consider any $K$ such that $K:A\rhd U$.
It is sufficient to show that $\Lambda^{\sf fr}_K$ extends L\"ob's Logic.
It is well-known that, over {\sf K}4, L\"ob's Principle and L\"ob's Rule are equivalent.\footnote{See \cite[Chapter 1, Section 1, Exercise 5(iii), p75]{smor:self85}.
Smory\'nski attributes the result to Macintyre and Simmons. Alternatively, see \cite[Chapter 3, p59]{bool:logi93}.}
So, it suffices to prove closure of  $\Lambda^{\sf fr}_K$ under L\"ob's Rule.
 Suppose $A \vdash B \to  \graydi_{K,A}B$. Then,
\[ (A+B) \rhd (U+\ccd AB) \rhd (A+B).\]
So,  $(A+B) \mutint  (U+\ccd AB)$ and, thus, by
irreconcilability, $A+B$ is inconsistent, i.e., $A \vdash \neg\, B$.

\emph{We prove the right-to left direction.} Suppose $A$ and $U$ are reconcilable. Suppose
$(A+B) \mutint (U+C)$, where $A+B$ is consistent. 

Suppose that  $K_0: A \rhd U$ and $K_1: (A+B) \rhd (U+C)$.
We define $K:= K_1\tupel{B}K_0$, i.e., the interpretation that
is $K_1$ if $B$ and $K_0$ otherwise. Clearly, $K:A \rhd U$ and $K': (A+B) \rhd (U+C)$, where $K'$ has the same
underlying translation as $K$.
Since $(U+C) \rhd (A+B)$,
we have $U+C \vdash \ccd AB$ and, so, $A+B \vdash \graydi_{K,A}B$. We also have $A\nvdash \neg \, B$.
So, $A$ is not closed under L\"ob's Rule.
We have: \[A \nvdash \graysq_{K,A}(B \to \graydi_{K,A}B) \to \graysq_{K,A} \neg\, B,\] by the fact that
 L\"ob's Rule follows  from L\"ob's Principle.
So, we do not have 
L\"ob's Principle in $\Lambda_K^{\sf fr}$.\footnote{We note that the detour over L\"ob's Principle is necessary here.
Closure of $A$ under a rule implies closure of the associated logic under the same rule, but not \emph{vice versa}.
So, we need to show that a principle has a counter-instance.}
\end{proof}

\begin{remark}
We note that Theorem~\ref{whenloeb} is a correspondence result for L\"ob's Principle.
Here the FM-frame $\tupel{A,U}$, is the analogue of a Kripke frame. The interpretation $K:A \rhd U$ in combination with a mapping $\sigma$ from
the propositional atoms to the sentences of the language of $A$ is the analogue of a 
model on the frame.
\end{remark}

When we have L\"ob's Principle, we also have  an analogue of Feferman's Theorem of the interpretability of inconsistency.
\begin{theorem}\label{fefersmurf}
  Suppose $\tupel{A,U}$ is an FM-frame and $A$ and $U$ are irreconcilable. We have:
\begin{enumerate}[a.]
\item
Suppose $K:A \rhd U$. Then $A \rhd (A+\graysq_{K,A} \bot)$.
\item
Suppose $M:B \rhd U$ and $A \rhd B$. Then $A \rhd (B+\graysq_{M,A} \bot)$.
\end{enumerate}
\end{theorem}

\begin{proof}
Ad (a). We have:
\begin{eqnarray*}
 (A+\graydi_{K,A} \top)  & \vdash & (A+\graydi_{K,A} \graysq_{K,A} \bot) \\
 & \rhd & (A+ \graysq_{K,A} \bot)
 \end{eqnarray*}
Trivially, $(A+ \graysq_{K,A} \bot) \rhd (A+ \graysq_{K,A} \bot)$. So, by a disjunctive interpretation, we find
$A \rhd (A+\graysq_{K,A} \bot)$.

Ad (b). Suppose $P:A \rhd B$. By (a), we have:\qedright
\[A \rhd (A+ \graysq_{M \circ P, A} \bot) \rhd (B+\graysq_{M,A} \bot).\]
\end{proof}

\subsection{{\sf S}4}
An FM-interpretation $K:A\rhd U$ is \emph{companionable} iff, for every $B$ of the $A$-language, there is a $C$ of the $U$-language such that
$(A+B) \mutint (U+C)$, where the interpretation of $U+C$ in $A+B$ has the same underlying translation as $K$. 

We can define  companionship in terms of the category $\mathbb E$ enriched by designated arrows
for finite extensions as indicated in the diagram below.

 \[
 \begin{tikzcd}
 U+C \arrow[dotted,leftarrow]{r}{M} & A+B \\
 U+C \arrow[dotted]{r}{K'} & A+B \\
 U \rar{K}\arrow[dotted]{u}{\subseteq} & A \arrow[swap]{u}{\subseteq} 
 \end{tikzcd}
 \]
 
 \medskip\noindent
 Here we  require no commutation for $M$.

\begin{theorem}\label{wijsneussmurf}
Consider an FM-interpretation $K:A \rhd U$. Then, $K$ is companionable iff $\Lambda_K^{\sf fr}$ extends {\sf S}4.
\end{theorem}

\begin{proof}
Suppose $K$ is companionable. Consider any $B$ and suppose $K': (A+B) \rhd (U+C)$, where $K'$ is based on the 
same translation as $K$ and that $(U+C) \rhd (A+B)$. It follows that $U+C \vdash \ccd AB$.
Hence, $A+B \vdash \graydi_{K,A}B$.

Conversely, suppose $\Lambda_K^{\sf fr}$ contains the reflection principle, aka {\sf M}.
We have that $A+B \vdash \graydi_{K,A} B$, so, there is an interpretation
$K'$ based on the same translation as $K$, such that $K':(A+B) \rhd (U+ \graydi_{K,A}B)$.
Conversely, $(U+ \graydi_{K,A}B) \rhd (A+B)$.
\end{proof}

\begin{remark}
We note that the characterisation provided by Theorem~\ref{wijsneussmurf} is rather different in nature
from the one given of L\"ob's Principle in Theorem~\ref{whenloeb}. First, in Theorem~\ref{wijsneussmurf},
we consider a property of interpretations rather than a property of frames as in Theorem~\ref{whenloeb}. Secondly, we use more notions
to formulate companionship than for irreconcilability. In Appendix~\ref{funo}, we prove a result for the reflection principle that is more
in the spirit of Theorem~\ref{wijsneussmurf}. However, to do that we need to consider local interpreter logics of an FM-frame rather than
the unique (global) interpreter logic of the frame.
\end{remark}

\begin{example}\label{losersmurf}
Let $A$ be e.g. the theory of the ordering of the natural numbers. The theory $A$ is finitely axiomatisable.
Theorem~\ref{smurferella} will tell us that $A$ is Friedman-reflexive. The identical interpretation ${\sf Id}_A$ of
$A$ in itself is clearly companionable. So, $\Lambda^{\sf fr}_{{\sf Id}_A}$ extends ${\sf S}_4$. 

In fact, we can show that the modality trivialises for this example. Consider any $B$ in the $A$-language.
The sentence  $\graydi_{{\sf Id}_A,A} B$ is either provable or refutable in $A$. If it is refutable,
we have $A \vdash \graydi_{{\sf Id}_A,A} B \to B$. Suppose it is provable. So,
$A \vdash \graydi_{{\sf Id}_A,A} B$. It follows that $A \rhd (A+B)$. So, $A+B$ is consistent, and, hence,
$B$ is provable in $A$. Thus, $A \vdash \graydi_{{\sf Id}_A,A} B \to B$. So, in both cases,
we have $A \vdash \graydi_{{\sf Id}_A,A} B \to B$. 
\end{example}

\begin{question}\label{q1A}
Can we find a more inspiring example of a theory with logic {\sf S}4 than Example~\ref{losersmurf}?
Is it perhaps possible to find an FM-interpretation with interpreter logic precisely {\sf S}4?
\end{question}

We remind the reader that, in a category, a morphism $a\stackarrow f b$ is a \emph{retraction} or \emph{split epimorphism} iff
there is a $b\stackarrow g a$, such that $f \circ g = {\sf id}_b$. Here $g$ is called \emph{section}, \emph{co-retraction}, or
\emph{split monomorphism}.

\begin{corollary}
Suppose the FM-interpretation $U\stackarrow K A$ is a retraction in $\mathbb E$. Then, $K$ is companionable and, hence, $\Lambda_K^{\sf fr}$ extends {\sf S}4.
\end{corollary}

\begin{proof}
Suppose  $U\stackarrow K A$ is an FM-interpretation which is a retraction in $\mathbb E$. Let $M$ be the corresponding section, i.e.,
 $K \circ M = {\sf Id}_A$.
Consider any $B$ in the $A$-language. We have: $(U+B^M) \rhd (A+B)$. 
Moreover, writing $\equiv$ for having the same theorems, we have:
\begin{eqnarray*}
 (U+B^M)   & \stackarrow{K'} & (A+B^{MK})\\
 & \equiv &  (A+B)
\end{eqnarray*}

\noindent
Here $K': (A+B) \rhd (U+B^M)$ has the same underlying translation as $K$. 
\end{proof}
 
 \subsection{Relations between Logics}
 The notion of sameness of theories that is relevant in the present paper is
 sentential congruence or $\mathbb E$-isomorphism (see Appendix~\ref{notarissmurf}, for more on these notions).
 In the case of interpreter logics, the relevant notion of sameness of interpretations is as follows.
 Suppose $V_0 \stackarrow{K_0} W_0$, $V_1 \stackarrow{K_1} W_1$.
 \begin{itemize}
 \item
$K_0 \approx K_1$ iff, there are $V_0\stackarrow{M} V_1$, 
 $V_1\stackarrow{\breve M} V_0$, $W_0\stackarrow{P} W_1$, $W_1\stackarrow{\breve P} W_0$,
 such that $M,\breve M$ and $P,\breve P$ are pairs of inverses in $\mathbb E$ and 
$ K_1\circ M = P \circ K_0$ in $\mathbb E$.
 \end{itemize}
 
 \[
 \begin{tikzcd}
 V_0 \rar{K_0}\arrow[leftrightarrow]{d}[swap]{M, \breve M} & W_0 \arrow[leftrightarrow]{d}{P,\breve P} \\
 V_1 \rar{K_1} & W_1
 \end{tikzcd}
 \]
 
 \noindent We note that $\approx$ is simply isomorphism in the arrow category ${\sf Arr}(\mathbb E)$.
 
 We have the following theorem.
 \begin{theorem}\label{gelijkheidssmurf}
 Suppose $U_0$ is Friedman-reflexive and $A_0$, $A_1$ are finitely axiomatised.
 Suppose further that $U_0\stackarrow{K_0}A_0$, $U_1\stackarrow{K_1}A_1$ and
 $K_0 \approx K_1$. Then $U_1$ is Friedman-reflexive and $\Lambda^{\sf fr}_{K_0} = \Lambda^{\sf fr}_{K_1}$. 
 \end{theorem}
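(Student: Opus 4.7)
The plan is to establish two things: that $U_1$ is Friedman-reflexive, and that $\Lambda^{\sf fr}_{K_0} = \Lambda^{\sf fr}_{K_1}$.

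For Friedman-reflexivity of $U_1$, I would invoke Theorem~\ref{terugtreksmurf}. Since $M, \breve M$ are mutual inverses in $\mathbb E$, the theory $U_1$ is an $\mathbb E$-retract of $U_0$ (with $\breve M$ as embedding and $M$ as retraction). Thus $U_1$ is \sbra{effectively} Friedman-reflexive, and the proof of that theorem lets us canonically choose ${\sf C}_{U_1}(B) := ({\sf C}_{U_0}(B))^M$.

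The heart of the argument for logic-equality is the commutation identity
\[ (\graydi_{K_0} \chi)^P \;\equiv_{A_1}\; \graydi_{K_1}(\chi^P) \]
for every $A_0$-sentence $\chi$. To prove it, expand the left-hand side as $({\sf C}_{U_0}(A_0 \wedge \chi))^{P \circ K_0}$, apply the identity $P \circ K_0 = K_1 \circ M$ (which holds in $\mathbb E$), and use the above choice of ${\sf C}_{U_1}$ to rewrite as $({\sf C}_{U_1}(A_0 \wedge \chi))^{K_1}$. It then remains to check that ${\sf C}_{U_1}(A_0 \wedge \chi) \equiv_{U_1} {\sf C}_{U_1}(A_1 \wedge \chi^P)$. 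By the defining property of interpreters, this reduces to $(U_1 + D) \rhd (A_0 \wedge \chi) \Iff (U_1 + D) \rhd (A_1 \wedge \chi^P)$ for every $U_1$-sentence $D$. This is a routine back-and-forth: composition with $\breve P$ (respectively $P$) converts an interpretation of one side into an interpretation of the other, using $\breve P \circ P = \mathrm{Id}_{A_0}$ and $P \circ \breve P = \mathrm{Id}_{A_1}$ in $\mathbb E$ to absorb the translated $\chi$-conjunct.

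From the commutation identity, a routine induction on modal formulas yields $(\phi[\sigma])^P \equiv_{A_1} \phi[P \circ \sigma]$ for any substitution $\sigma$ of $A_0$-sentences into the propositional variables. Since $P, \breve P$ are $\mathbb E$-inverses we have $A_0 \vdash \theta \Iff A_1 \vdash \theta^P$, and since $\sigma \mapsto P \circ \sigma$ is a bijection modulo sentential congruence between $A_0$- and $A_1$-substitutions, we conclude $\phi \in \Lambda^{\sf fr}_{K_0} \Iff \phi \in \Lambda^{\sf fr}_{K_1}$. The main technical obstacle will be the bookkeeping in the commutation identity: equalities such as $P \circ K_0 = K_1 \circ M$ and $\breve P \circ P = \mathrm{Id}_{A_0}$ hold only in $\mathbb E$, i.e., modulo sentential congruence, so translated sentences must be tracked up to the appropriate provable equivalence throughout; once that is in order, the inductions and substitution-bijection arguments are routine.
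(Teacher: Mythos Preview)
Your proposal is correct and follows essentially the same route as the paper: Friedman-reflexivity of $U_1$ via the retract theorem, a commutation lemma between $\graydi_{K_0}$ and $\graydi_{K_1}$ transported along $P$, an induction on modal formulas, and the final passage via the $\mathbb E$-isomorphism $P,\breve P$. The only cosmetic differences are that the paper states the commutation lemma on the $A_0$-side (as $A_0 \vdash \graydi_{K_0} B \iff (\graydi_{K_1} B^P)^{\breve P}$) and proves it by chaining interpretations directly rather than by first fixing ${\sf C}_{U_1} = ({\sf C}_{U_0})^M$ and then reducing to $(A_0\wedge\chi)\mutint (A_1\wedge\chi^P)$; your reduction is arguably a shade cleaner, since that mutual interpretability is immediate from $P,\breve P$ and then Theorem~\ref{minismurf} finishes the job.
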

 
 \noindent
 The theorem is one of these examples where the truth is immediately clear but it still requires some work to
 really prove it. We give the proof in Appendix~\ref{verismurf}.
 
 We have a second preservation theorem.
 \begin{theorem}\label{machtigesmurf}
 Let $F$ be an endofunctor $F$ of $\mathbb D$. Here we suppose that $F$ is specified on concrete theories and interpretations.
  We assume that:
\begin{itemize}
\item
$F$ preserves finite axiomatisability.
\item
 For each theory $V$, there is a faithful interpretation  $V \stackarrow{\eta_V} F(V)$.
 \item
 Suppose $\Gamma$ is a set of sentences in the $V$-language.
 Then, $F(V+\Gamma) = F(V) + \Gamma^{\eta_V}$.\footnote{This condition does not
 look very `categorical', since it goes into the hardware. In Appendix~\ref{verismurf}, we 
discuss a more categorical formulation.}
\end{itemize}
Let $U$ be Friedman-reflexive and suppose $F(V) \stackarrow{M_V} V$, for all extensions
$V$ of $U$ in the same language. Here there is no further constraint on the $M_V$.
Let $A$ be finitely axiomatised and suppose  $U \stackarrow{K} A$.

We have: $\Lambda^{\sf fr}_{\eta_A \circ K} \subseteq \Lambda^{\sf fr}_K$.
\end{theorem}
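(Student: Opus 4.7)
The plan is to prove, by induction on a modal formula $\phi(p_1,\dots,p_n)$, a \emph{translation lemma}: for all $A$-sentences $C_1,\dots,C_n$,
\[
F(A)\;\vdash\;\bigl(\phi[C_i, \graydi_{K,A}]\bigr)^{\eta_A} \iff \phi\bigl[C_i^{\eta_A}, \graydi_{\eta_A\circ K,\,F(A)}\bigr].
\]
Given this, if $\phi \in \Lambda^{\sf fr}_{\eta_A\circ K}$, the right-hand side is $F(A)$-provable for every choice of substituents, hence so is the left-hand side; faithfulness of $\eta_A$ then transports provability down to $A$ and yields $\phi \in \Lambda^{\sf fr}_K$, as required.

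The Boolean cases of the induction are automatic since translation commutes with Boolean connectives, and the base case is trivial. The modal step reduces to the \emph{central claim}: for every $A$-sentence $D$,
\[
\bigl(\graydi_{K,A}\, D\bigr)^{\eta_A} \;\equiv_{F(A)}\; \graydi_{\eta_A\circ K,\,F(A)}\, D^{\eta_A}.
\]
Writing $K' := \eta_A \circ K$ and unfolding both sides, both are $K'$-translations of interpreter sentences: respectively ${\sf C}_U(A\wedge D)$ and ${\sf C}_U(F(A)\wedge D^{\eta_A})$. It therefore suffices to prove the $U$-equivalence $U \vdash {\sf C}_U(A\wedge D) \iff {\sf C}_U(F(A)\wedge D^{\eta_A})$ and then translate under $K'$.

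This $U$-equivalence will be established by matching the defining universal properties of the two interpreters. Using the third hypothesis in the form $F(A+D) = F(A) + D^{\eta_A}$, it is enough to show that, for every $U$-sentence $B$,
\[
(U+B) \rhd (A+D) \iff (U+B) \rhd F(A+D).
\]
The converse direction is immediate from the interpretation $\eta_{A+D}:(A+D)\to F(A+D)$, which gives $F(A+D) \rhd (A+D)$. For the forward direction, functoriality of $F$ sends $(U+B)\rhd (A+D)$ to $F(U+B)\rhd F(A+D)$; the third hypothesis rewrites $F(U+B) = F(U) + B^{\eta_U}$; and, crucially, since $U+B$ is itself an extension of $U$ in $U$'s own language, the hypothesis furnishes $M_{U+B}:F(U+B)\to U+B$, i.e.\ $(U+B)\rhd F(U+B)$. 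Composing delivers $(U+B)\rhd F(A+D)$.

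The main obstacle is precisely this last step. The hypothesis on $M$ is granted only for extensions of $U$ in $U$'s language, whereas the symmetric-looking interpreter equivalence might tempt one to want an interpretation $A\rhd F(A)$, which is \emph{not} supplied. The argument succeeds exactly because the universal quantifier in the defining property of interpreters ranges over $U$-sentences $B$, where $M_{U+B}$ is indeed available. Once the central claim is in hand, the translation lemma and hence the theorem follow routinely, with faithfulness of $\eta_A$ used once at the end to descend from $F(A)$-provability back to $A$-provability.
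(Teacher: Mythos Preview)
Your proposal is correct and follows essentially the same approach as the paper: establish the $U$-equivalence ${\sf C}(A\wedge D)\iff{\sf C}(F(A)\wedge D^{\eta_A})$, lift it via $K'=\eta_A\circ K$ to the central claim, prove the translation lemma by induction on modal formulas, and invoke faithfulness of $\eta_A$ at the end. The only cosmetic difference is that you derive the $U$-equivalence by showing $(U+B)\rhd(A+D)\Leftrightarrow(U+B)\rhd F(A+D)$ for all $B$ (via $M_{U+B}$, functoriality, and $\eta_{A+D}$), whereas the paper exhibits the two implications directly by composing the chains $F(A+B)\stackrel{F(P)}{\to}F(U+{\sf C}(A\wedge B))\stackrel{M'}{\to}U+{\sf C}(A\wedge B)$ and $(A+B)\stackrel{\eta_{A+B}}{\to}F(A+B)\stackrel{Q}{\to}U+{\sf C}(F(A)\wedge B^{\eta_A})$; these are the same argument in slightly different packaging.
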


\noindent
We give the proof in Appendix~\ref{verismurf}.

 Let ${\sf in}^{V_0,V_1}_i$ be the obvious interpretation of $V_i$ in $V_0\owedge V_1$.

\begin{corollary}\label{insmurf}
Suppose $K: A \rhd U$ is an FM-interpretation.
Then, we have $\Lambda^{\sf fr}_{{\sf in}_0^{A,B} \circ K} \subseteq \Lambda^{\sf fr}_K$.
\end{corollary}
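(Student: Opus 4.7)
The strategy is to apply Theorem~\ref{machtigesmurf} with the endofunctor $F(V):=V\owedge B$ on $\mathbb D$ and the family of interpretations $\eta_V:={\sf in}_0^V$. Since $\eta_A={\sf in}_0^A$, the theorem's conclusion $\Lambda^{\sf fr}_{\eta_A\circ K}\subseteq\Lambda^{\sf fr}_K$ is literally the statement of the corollary.

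First I would discharge the three structural hypotheses on $F$. Preservation of finite axiomatisability is immediate: when $V$ is finitely axiomatised, $V\owedge B$ is axiomatised by the finitely many relativisations of the axioms of $V$ (to $\triangle_0$) and of $B$ (to $\triangle_1$), together with the partition axioms for $\triangle_0,\triangle_1$. Faithfulness of $\eta_V$ follows by a standard model-theoretic argument: every $V$-model can be glued with any model of $B$---which exists because $U\rhd B$ ensures the consistency of $B$---into a $V\owedge B$-model whose $\triangle_0$-reduct is the given $V$-model, so $\eta_V$ forces no $V$-sentence beyond the $V$-theorems. The identity $F(V+\Gamma)=F(V)+\Gamma^{\eta_V}$ holds on the nose, since both sides are axiomatised by the axioms of $V\owedge B$ together with the $\triangle_0$-relativised members of $\Gamma$.

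The main work is to produce, for each extension $V\supseteq U$ in the language of $U$, an interpretation $V\owedge B\stackarrow{M_V} V$. Here I use the hypothesis $U\rhd B$: since $V\supseteq U$, also $V\rhd B$ via some interpretation $L$ with domain formula $\delta_L$. I would then build a multi-dimensional interpretation of $V\owedge B$ in $V$ whose domain splits into two definable pieces---the first, serving as $\triangle_0$, interpreting the $V$-symbols as themselves via the identity, and the second, serving as $\triangle_1$, interpreting the $B$-symbols via $L$ with domain $\delta_L$---the partition being realised through an auxiliary coordinate distinguishing the two pieces. This splitting step is the main technical obstacle: some care is needed to guarantee that both pieces are definably nonempty, but since any consistent $V\owedge B$ already demands two distinct elements there is no loss in arranging this on the $V$-side, and if $V\owedge B$ is outright inconsistent then $M_V$ exists trivially.

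With all hypotheses in place, Theorem~\ref{machtigesmurf} applied to the given $U\stackarrow{K}A$ delivers $\Lambda^{\sf fr}_{{\sf in}_0^A\circ K}\subseteq\Lambda^{\sf fr}_K$, as required.
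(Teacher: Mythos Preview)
Your proof is correct and follows exactly the paper's intended route: the corollary is stated immediately after Theorem~\ref{machtigesmurf} precisely as a direct application with $F(V)=V\owedge B$ and $\eta_V={\sf in}_0^V$, and your verification of the hypotheses is the natural one. One small imprecision worth tightening: the claim that ``$U\rhd B$ ensures the consistency of $B$'' is not true on its own---you need $U$ consistent as well---but if $U$ is inconsistent then so is $A$ (via $K$), whence both interpreter logics are the full modal language and the inclusion holds trivially.
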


\noindent
We will see a further application of Theorem~\ref{machtigesmurf} in Subsection~\ref{wijsneuzigesmurf}.
 
 We end with two simple observations. 
 
 \begin{theorem}
  Suppose $U$ is Friedman-reflexive and $K:U \lhd A$. Let $B$ be a sentence in the $A$-language.
 Then, $A \vdash \graydi_{K,A}(B\wedge C) \iff \graydi_{K,A+B}C$.
 \end{theorem}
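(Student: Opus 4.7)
The plan is to unfold both sides of the biconditional using the definition $\graydi_{K,D}E := ({\sf C}_U(D\wedge E))^K$ and observe that the two resulting expressions are $K$-translations of interpreters of the same finitely axiomatised theory, differing only in how we parenthesise the conjunction.

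First, by definition, $\graydi_{K,A}(B\wedge C) = ({\sf C}(A \wedge (B \wedge C)))^K$. On the other hand, the finitely axiomatised theory $A+B$ is axiomatised by the single sentence $A \wedge B$, so $\graydi_{K,A+B}C = ({\sf C}((A\wedge B) \wedge C))^K$. The two arguments $A\wedge(B\wedge C)$ and $(A\wedge B)\wedge C$ are logically equivalent, hence they axiomatise the same finitely axiomatised theory. Two interpreters of the same theory over $U$ are $U$-provably equivalent by the uniqueness result stated at the beginning of Section~\ref{basics} (applied with $A\wedge(B\wedge C)$ and $(A\wedge B)\wedge C$ as equivalent presentations of the same theory, or equivalently, using Theorem~\ref{minismurf} in both directions). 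Therefore $U \vdash {\sf C}(A\wedge(B\wedge C)) \iff {\sf C}((A\wedge B)\wedge C)$.

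Finally, since $K:U\lhd A$, translating a $U$-theorem along $K$ gives an $A$-theorem, and biconditionals are preserved by translation. Thus $A \vdash ({\sf C}(A\wedge(B\wedge C)))^K \iff ({\sf C}((A\wedge B)\wedge C))^K$, which is the desired conclusion $A \vdash \graydi_{K,A}(B\wedge C) \iff \graydi_{K,A+B}C$.

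There is essentially no obstacle here: the theorem reduces to associativity of conjunction combined with the categorical uniqueness of interpreters as universal arrows. The only conceptual point worth flagging is that the second subscript of $\graydi_{K,D}$ is precisely a sentence that gets conjoined with the modal argument, so the shift from decorating $\graydi$ with $A$ to decorating it with $A+B$ amounts exactly to moving a conjunct from the argument into the ambient theory parameter.
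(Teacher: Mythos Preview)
Your argument is correct and is exactly the intended one: the paper explicitly leaves this proof to the reader, and unfolding the definition $\graydi_{K,D}E = ({\sf C}_U(D\wedge E))^K$ together with uniqueness of interpreters (or Theorem~\ref{minismurf} applied in both directions to the logically equivalent theories $A\wedge(B\wedge C)$ and $(A\wedge B)\wedge C$) is precisely what is called for.
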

  
 Suppose $K:A \rhd U$.
 Let ${\sf Th}(K) := \verz{B \mid A \vdash B^K}$.
 
 \begin{theorem}
 Suppose $U$ is Friedman-reflexive and $K:U \lhd A$.
 Let $K^\ast: {\sf Th}(K) \lhd A$ be the interpretation based on $\tau_K$ the translation given with $K$. Then,
 $\Lambda^{\sf fr}_K = \Lambda^{\sf fr}_{K^\ast}$.
 \end{theorem}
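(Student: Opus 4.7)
The plan is to reduce the equality of the logics to a simple sentence-level coincidence, exploiting the fact that ${\sf Th}(K)$ is an extension of $U$ in the same language and that $K^\ast$ uses exactly the translation $\tau_K$ underlying $K$.

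First I would observe that $U \subseteq {\sf Th}(K)$ as theories in the language of $U$: indeed, if $U \vdash B$ then $A \vdash B^K$, so $B \in {\sf Th}(K)$. Since $U$ is Friedman-reflexive, Theorem~\ref{upsmurf} applies and tells us that ${\sf Th}(K)$ is Friedman-reflexive as well, with the same choice of interpreter function. Concretely, we may set ${\sf C}_{{\sf Th}(K)}(X) := {\sf C}_U(X)$ for every finitely axiomatised $X$. This is the one substantive ingredient of the argument.

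Next, because $K^\ast$ is defined as the interpretation based on the very same translation $\tau_K$, the syntactic operation $(\cdot)^{K^\ast}$ on formulas in the signature of ${\sf Th}(K)$ (which is the signature of $U$) coincides with the operation $(\cdot)^K$. Combining this with the previous step, for every sentence $C$ in the $A$-language we have, as literal $A$-sentences,
\[
\graydi_{K^\ast,A} C \;=\; \bigl({\sf C}_{{\sf Th}(K)}(A \wedge C)\bigr)^{K^\ast} \;=\; \bigl({\sf C}_U(A\wedge C)\bigr)^K \;=\; \graydi_{K,A} C.
\]
Even if we picked different but still valid interpreter functions on the two sides, the results would be ${\sf Th}(K)$-provably equivalent (hence $U$-provably equivalent), so after applying $(\cdot)^K = (\cdot)^{K^\ast}$ they would at least be $A$-provably equivalent. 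In either case $\graydi_{K^\ast,A}$ and $\graydi_{K,A}$ define the same modality on $A$, so by induction on the complexity of modal formulas $\Lambda^{\sf fr}_K = \Lambda^{\sf fr}_{K^\ast}$.

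There is no real obstacle here; the only thing to check carefully is that one is entitled to transport the interpreter from $U$ up to ${\sf Th}(K)$, and this is exactly the content of Theorem~\ref{upsmurf}. The rest is just noting that the $\graydi$-operator depends on the interpretation only through its underlying translation together with a choice of ${\sf C}$-function.
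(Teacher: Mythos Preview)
Your proof is correct and is exactly the natural unwinding the paper has in mind (the paper leaves the proof to the reader): apply Theorem~\ref{upsmurf} to transport {\sf C} from $U$ to ${\sf Th}(K)\supseteq U$, and observe that $K$ and $K^\ast$ share the same underlying translation, so the two $\graydi$-operators coincide. One harmless slip: your parenthetical ``${\sf Th}(K)$-provably equivalent (hence $U$-provably equivalent)'' has the implication the wrong way round, since ${\sf Th}(K)\supseteq U$; but this remark is never used --- what you actually need, and correctly derive, is $A$-provable equivalence, which follows from ${\sf Th}(K)$-provability directly via the definition of ${\sf Th}(K)$.
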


 \noindent
 We leave the proofs to the reader. 
 
 \begin{question}\label{q2}
 Suppose $K,M:U \lhd A$ are FM-interpretations and  ${\sf Th}(K)={\sf Th}(M)$. Do we have  $\Lambda^{\sf fr}_K = \Lambda^{\sf fr}_{M}$,
 or is there a counter-example?
 \end{question}
 
 In the rest of this paper, we will have a closer look at interpreter logics
 over certain special classes of base theories.

 \section{Complete Theories}\label{compunde}

In this section, we discuss complete theories.

\begin{theorem}\label{smurferella}
Suppose $U$ is a complete theory. Then, $U$ is Friedman-reflexive.
\end{theorem}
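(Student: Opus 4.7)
The plan exploits the fact that completeness collapses the category $\mathbb B_U$ of finite extensions to essentially two objects. First I would observe that if $U$ is complete, then for every sentence $B$ in the language of $U$, either $U\vdash B$, in which case $U+B$ is deductively equivalent to $U$, or $U\vdash \neg\, B$, in which case $U+B$ is inconsistent. So, modulo $U$-provable equivalence, the only candidate interpreter values we need to distinguish are $\top$ and $\bot$.

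Given this, for each finitely axiomatised $A$ I would simply define
\[
{\sf C}(A) := \begin{cases} \top & \text{if } U \rhd A, \\ \bot & \text{otherwise.} \end{cases}
\]
To verify that this choice works, I would do a case split on whether $U\rhd A$.

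In the case $U\rhd A$, the required equivalence $(U+B)\rhd A \Iff U+B\vdash\top$ reduces to showing that $(U+B)\rhd A$ always holds. If $U\vdash B$, then $U+B$ is $U$ itself (up to provable equivalence) and interprets $A$ by assumption; otherwise $U+B$ is inconsistent and trivially interprets $A$. In the case $U\nrhd A$, I need $(U+B)\rhd A \Iff U+B \vdash \bot$. If $U+B$ is inconsistent, both sides hold. If $U+B$ is consistent, then by completeness $U\vdash B$, so $U+B\equiv U$, and the left side fails by assumption on $A$, while the right side fails because $U+B$ is consistent.

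There is no real obstacle; the only subtlety worth flagging is that this argument does not yield effective Friedman-reflexivity, since checking $U\rhd A$ is an existential statement about translations and need not be decidable even when $U$ is. This is consistent with the remark that decidable consistent theories will be shown Friedman-reflexive but not effectively so.
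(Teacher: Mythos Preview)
Your proposal is correct and takes essentially the same approach as the paper: define ${\sf C}(A)$ to be $\top$ or $\bot$ according to whether $U\rhd A$, and observe that completeness collapses every $U+B$ to either $U$ or the inconsistent theory. The paper compresses your case analysis into the single remark that $C$ is an interpreter of $A$ iff $(U\rhd A \Leftrightarrow U\vdash C)$; your version simply spells this out, and your closing observation about non-effectivity is also in line with the paper's subsequent discussion.
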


\begin{proof}
Suppose $U$ is complete. So, modulo $U$-provable equivalence, we only
have propositions $\top$ and $\bot$. So, $A$ has as pro-interpreters, modulo provable equivalence,
either just $\bot$ or both $\bot$ and $\top$. In the first case, the desired interpreter
is $\bot$, in the second it is $\top$.
\end{proof}
 
  \begin{example}
  Presburger Arithmetic is complete and decidable. So, it is Friedman-reflexive but not effectively so.
  
  True arithmetic ${\sf Th}(\mathbb N)$ extends {\sf PA} and is, hence, effectively Friedman-reflexive. Thus, we have an example
 of a complete theory that is effectively Friedman-reflexive. We note that the \ccz\ that takes $\top$ and $\bot$ as values 
 cannot be recursive, providing an example of a salient non-recursive choice of \ccz.
 \end{example}
 
 We consider the interpreter logic for a complete base. We note that if 
 $\tupel{U,A}$ is an FM-frame and $U$ is complete, then $U$ must be decidable and, hence
 any choice for \ccz\ must be non-computable.
 
 We show that the interpreter logics for complete bases extend {\sf K}45.
 
 \begin{theorem}
 Suppose $K:A\rhd U$ is an FM-interpretation and $U$ is complete. Then, $A \vdash \graydi_{K,A}B \to \graysq_{K,A}\graydi_{K,A}B$. 
 \end{theorem}
 
 \begin{proof}
  Suppose $K:A\rhd U$ is an FM-interpretation and $U$ is complete. Consider any $A$-sentence $B$.
  If $A+ \graydi_{K,A}B$ is inconsistent, we are done. If not, it follows that $U+\ccd AB$ is consistent.
  Hence, by completeness, $U \vdash \ccd AB$. So, $A \vdash \graydi_{K,A}B$ and, hence,
 $ A \vdash \graysq_{K,A}\graydi_{K,A}B$. We may conclude that in both cases, we have
  $ A \vdash\graydi_{K,A}B \to  \graysq_{K,A}\graydi_{K,A}B$.
 \end{proof}
  
 We note that, if $U \nrhd A$, then $U \nvdash \ccu A$ and, so, $U \vdash \neg\, \ccu A$.
 It follows that $A \vdash \graysq_{K,A}\bot$. So, the interpreter logic for $A$ trivialises.
 Suppose, on the other hand, that $U \rhd A$. Then, $A \vdash \graydi_{K,A} \top$.
 It follows that $U$ is mutually interpretable with a finite sub-theory.
 
 We suspect that most non-finitely axiomatisable complete and decidable theories in the literature
 have the property that they
  are \emph{not} interpretable in a finite sub-theory. This has been verified for Presburger
 Arithmetic. See \cite{pakh:mult20b}. (So, the interpreter frame logic of Presburger Arithmetic
 trivialises.)
 
 There are, of course, examples of consistent, finitely axiomatised, 
 complete and decidable theories like the theory of dense linear orderings without end-points and the
 theory of the ordering of the natural numbers.
  
 \begin{question}\label{q2A}
 Is there an example of an FM-interpretation $K:A\rhd U$, where $U$ is complete, with an interesting interpreter logic?
 \end{question}
 
 \section{Finitely Axiomatised Theories} \label{finax}
If the Friedman-reflexive base is finitely axiomatised, we 
 can view the embedding functor as an embedding of the finite extensions of the base $A$ in the finitely axiomatised theories.
 So, we can view the Friedman-reflexivity of the base as the existence of an adjoint of this functor.
 
 There are plenty of consistent complete finitely axiomatised theories, so we do not lack
 examples of the phenomenon of a consistent finitely axiomatised Friedman-reflexive theory.
 However, these examples cannot be effectively Friedman-reflexive, since they, clearly, cannot
 be essentially undecidable. 
 
  \begin{question}\label{q3}
 Is there a consistent finitely axiomatised theory that is effectively Friedman-reflexive?
 \end{question}
 
 In the case of a finitely axiomatised base, there is, of course, the salient interpreter logic of
 $A$ over $A$ via the identical interpretation. This logic satisfies {\sf S}4, since ${\sf Id}_A$ is
 clearly companiable.
  
 \section{Essentially Sententially Reflexive Theories}
 In this section we study essential sentential reflexiveness.
 A theory $U$ is \emph{essentially sententially reflexive} if, for some $N$, we have $N:U \rhd {\sf S}^1_2$ and, for all
 $U$-sentences $A$,
 $U \vdash \opr^N_n A \to A$. Here $A$ ranges over $U$-sentences and $\opr_n$ means provability in
 predicate logic using only involving formulas with depth of quantifier alternations $\leq n$.  As a default
 we assume in our notation that $n$ exceeds $\rho(A)$, the  depth of quantifier alternations of $A$.
 
 We will write $\apr_AB$ for $\opr_{\rho(A \to B),A}B$ and $\aco_AB$ for $\oco_{\rho(A \wedge B),A}B$.
 
 \subsection{A Basic Fact}
 We have the following theorems. The second result provides a coordinate-free characterisation of Essentially Sententially Reflexive Theories
 in the sequential case.
 
 \begin{theorem}\label{essereuno}
Suppose $U$ is essentially sententially reflexive. Then, there is an $N:U \rhd {\sf S}^1_2$, such that,
for all $\Sigma^0_1$-sentences $S$ and for all $M:U \rhd {\sf S}^1_2$, we have $U \vdash S^N \to S^M$.
 \end{theorem}
 
 \begin{proof}
Suppose that $U$ is essentially sententially reflexive with witness $N_0:U \rhd {\sf S}^1_2$. We start with the observation that, for all $U$-sentences
$A$ and for all $m \geq \rho(A)$, we have
$U \vdash \opr^{N_0}_m A \to A$. This follows immediately by replacing $A$ by $(A\wedge B)$, where $B$ is a tautology with $\rho(B) = m$.
Let $N $ be a  logarithmic cut of $N_0$. Consider any $M: U \rhd {\sf S}^1_2$. We have, for a sufficiently large $m$ and for some
$U$-theorem $D$, that
$U \vdash S^N \to \opr^{N_0}_{m, D} S^M$ (see Theorem~\ref{sicosmurf}). Here we can take $D := (E_{{\sf S}^1_2} \wedge \bigwedge {\sf S}^1_2)$. 
It follows that $U \vdash  S^N \to S^M$.
\end{proof}

\begin{theorem}\label{essere}
Suppose $U$ is sequential. Then the following conditions are equivalent.
\begin{enumerate}[a.]
\item
$U$ is essentially sententially reflexive.
\item
There is an $N:U \rhd {\sf S}^1_2$, such that
 for all $\Sigma^0_1$-sentences $S$ and all $M:U \rhd {\sf S}^1_2$, we have $U \vdash S^N \to S^M$.
 \item
 Consider any $N^\ast:U \rhd {\sf S}^1_2$. There is an $N^\ast$-cut $I$, such that, for all 
  $\Sigma^0_1$-sentences $S$ and all $N^\ast$-cuts $J$, we have  $U \vdash S^I \to S^J$.
\end{enumerate}
\end{theorem}

\begin{proof}
Suppose $U$ is sequential. 

Theorem~\ref{essereuno} tells us that (a) implies (b). We prove the other direction.
Suppose $N$ witnesses (b). Consider any  $U$-sentence $A$.
Since $U$ is sequential, there is an $N$-cut $I$, such that $U \vdash \apr^IA \to A$. Since
$U \vdash \apr^N A \to \apr^I A$, we find $U \vdash \apr^NA \to A$.  So $N$ witnesses the
essential sentential reflexivity of $U$.

We prove the implication from (b) to (c). Let $N$ witness (b). Consider any $N^\ast:U \rhd {\sf S}^1_2$.
By a result of Pudl\'ak, there is an $N$-cut $I$ and an $N^\ast$-cut $I^\ast$, such that $I$ and $I^\ast$ are
$U$-definably, $U$-provably isomorphic. We take $I^\ast$ as our witness for (c). Reason in $U$. Let $J$ be any
$N^\ast$-cut.
Suppose $S^{I^\ast}$. Then, $S^I$, and, hence $S^N$. It follows that $S^J$.

We prove the implication from (c) to (b). We take as witness for (a), the $N^\ast$-cut $I$ promised by (c).
Consider any $M:U \rhd {\sf S}^1_2$. Let $J$ and $J^\ast$ be $U$-definably, $U$-provably isomorphic cuts
of $M$ and $I$. Reason in $U$. Suppose $S^I$. Then, $S^{J^\ast}$. So, $S^J$, and, hence, $S^M$.  
\end{proof}

 \subsection{Essential Sentential Reflexiveness implies Friedman-reflexiveness}
 We have the following theorem.
  
 \begin{theorem}\label{erfr}
 Suppose $U$ is essentially sententially reflexive with witnessing interpretation $N$. Then,
 $\ccu A := \frcr{A}{N}$ witnesses that $U$ is effectively Friedman-reflexive.
 \end{theorem}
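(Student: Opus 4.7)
The plan is to establish the biconditional $(U+B) \rhd A \iff U+B \vdash \frcr{A}{N}$ for every $U$-sentence $B$ and every finitely axiomatised $A$; this is precisely the condition defining $\frcr{A}{N}$ as an interpreter, and the function $A \mapsto \frcr{A}{N}$ is manifestly recursive, so effective Friedman-reflexivity follows.

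For the right-to-left direction, I would apply the Interpretation Existence Lemma inside ${\sf S}^1_2$ to get $({\sf S}^1_2 + \frc{A}) \rhd A$; relativising this interpretation along $N$ yields $(U + \frcr{A}{N}) \rhd A$, whence $(U+B) \rhd A$ whenever $U + B \vdash \frcr{A}{N}$. This half parallels the easy direction of Theorem~\ref{friedmansmurf}.

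For the left-to-right direction, I would mimic Friedman's {\sf PA}-argument but replace full essential reflexiveness with its bounded variant. Assume $(U+B) \rhd A$. By compactness, some finite subtheory $U_0 \subseteq U$ already satisfies $(U_0 + B) \rhd A$; this is a $\Sigma^0_1$-fact. The standard ${\sf S}^1_2$-formalisation of the transfer of bounded consistency along an interpretation yields, for suitably matched depth parameters, $\frc{U_0\wedge B} \to \frc{A}$, which, relativised through $N$, gives $U \vdash \frcr{U_0\wedge B}{N} \to \frcr{A}{N}$. It remains to derive $U + B \vdash \frcr{U_0\wedge B}{N}$. Here I would instantiate essential sentential reflexiveness at the $U$-sentence $\neg(U_0\wedge B)$: the principle $U \vdash \opr^N_n \neg(U_0\wedge B) \to \neg(U_0\wedge B)$, valid once $n$ is above the ambient complexity, contraposes to $U + U_0 + B \vdash \frcr{U_0\wedge B}{N}$, and $U \vdash U_0$ collapses $U + U_0 + B$ to $U + B$. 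Concatenating the two implications delivers $U + B \vdash \frcr{A}{N}$.

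The principal obstacle I foresee is the bookkeeping of quantifier-alternation depths: one must pick the reflection index $n$ large enough both to dominate $\rho(U_0 \wedge B)$ and to absorb the complexity overhead introduced by the formalised interpretation-existence argument linking $\frc{U_0\wedge B}$ to $\frc{A}$. Once this complexity accounting is verified, the two halves combine to pin $\frcr{A}{N}$ down, up to $U$-provable equivalence, as the unique interpreter of $A$ over $U$.
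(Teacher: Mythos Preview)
Your proposal is correct and follows essentially the same route as the paper's proof: compactness to pass to a finite subtheory $U_0$ (the paper's $D$), formalised transfer of bounded consistency along the interpretation, and then an application of essential sentential reflexiveness to obtain $U+B \vdash \oco^N_{m,U_0}B$, which feeds into the implication. The paper sidesteps the depth-bookkeeping worry you raise by writing $\oco^N_{m,D}B$ for ``sufficiently large $m$'' rather than fixing $m = \rho(U_0\wedge B)$ as your notation $\frcr{U_0\wedge B}{N}$ does, but this is exactly the adjustment you anticipate in your final paragraph.
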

 
  \noindent The argument is, of course, just Harvey Friedman's argument for the case of {\sf PA}.
 
  \begin{proof}
  Suppose $N$ witnesses the sentential essential reflexiveness of
$U$.

 Suppose $(U+B) \rhd A$. Then, for some finite sub-theory $D$ of $U$, we have that $(D+B)\rhd A$. If follows that
 $U \vdash ((D+B) \rhd A)^N$ and, so, $U \vdash \oco^N_{m,D}B \to \frcr AN$, for sufficiently large $m$. 
 By the  sentential essential reflexiveness of
$U$, it follows that $U+B \vdash  \frcr{A}{N}$.
 
 For the opposite direction, suppose $U+B \vdash \frcr{A}{N}$. Then, by the Interpretation Existence Lemma, we find $(U+B)\rhd A$. 
 \end{proof}

\subsection{Peano Corto}\label{cortoboven}
In this subsection, we discuss  the sententially essentially reflexive theory $\paco$. 
However, since we mainly want to illustrate the idea of a weak  sententially essentially reflexive theory, it
seemed good to zoom in on one.

In our paper \cite{viss:pean14}, we used ${\sf PA}^-$ as starting point.
Here,  we use ${\sf S}^1_2$ as starting point since it fits the set-up of the present paper better. 

We introduce $\paco$ and its little brother $\paoo$ and its
big brother $\paba$.

\begin{itemize}
\item
Peanissimo or $\paoo$ is the theory
\[{\sf S}^1_2 + \verz{(S \to S^I) \mid \text{$S$ is an $\exists\Sigma^{\sf b}_1$-sentence and
  $I$ is an ${\sf S}^1_2$-cut}}.\] 
  This theory is identical to ${\sf S}^1_2+ \verz{\apr A \to A \mid \text{$A$ is an arithmetical sentence}}$.
  \item
 Peano Corto or $\paco$ is the theory 
\[{\sf S}^1_2 + \verz{(S \to S^I) \mid \text{$S$ is a $\Sigma^0_1$-sentence and
  $I$ is an ${\sf S}^1_2$-cut}}.\] 
  Here $\Sigma^0_1$ means a block of existential quantifiers followed by a $\Delta_0$-formula.
  In fact, Peano Corto is Peanissimo plus the scheme \[S \to \exists x\,\exists z\, (2^x = z \wedge S_0(x)),\] where
  $S = \exists x\, S_0(x)$.
   \item
   Peano Basso or $\paba$ is the theory \[{\sf S}^1_2 + \verz{(S \to S^I) \mid \text{$S$ is a $\Sigma^0_{1,\infty}$-sentence and
  $I$ is an ${\sf S}^1_2$-cut}}.\] 
  Here $\Sigma^0_{1,\infty}$ is the class of formulas given by a block of existential quantifiers and bounded 
  universal quantifiers, where both sorts may occur in an alternating way, followed by a $\Delta_0$-formula.
  In  \cite{viss:pean14}, it is shown that Peano Basso is Peano Corto plus $\Sigma^0_1$-collection.
   \end{itemize}
   
    In  \cite{viss:pean14}, it is shown that Peano Corto and Peano Basso are essentially sententially reflexive w.r.t.\ the identical cut.
    A similar argument shows the same for Peanissimo.
    
   All three theories are locally cut-interpretable in ${\sf S}^1_2$, i.o.w., ${\sf S}^1_2 \rhd_{\sf cut,loc} \paoo$ and ${\sf S}^1_2 \rhd_{\sf cut,loc} \paco$
   and ${\sf S}^1_2 \rhd_{\sf cut,loc} \paba$. Also all three theories are mutually cut-interpretable.
  
  We remind the reader that each theory is recursively axiomatisable, since we can replace the
  cuts $I$ in our formulation by $E\tupel{{\sf cut}_x(E)}(x=x)$, where $E$ ranges over formulas with at most
  the free variable $x$. Here  ${\sf cut}_x(E)$ is the ${\sf S}^1_2$-sentence that expresses 
  `$\verz{x\mid E(x)}$ is a cut' and $F\tupel{G}H$ is $((G\to F) \wedge (\neg\, G \to H))$. 
  
  Since, the identical cut is the designated cut, we can, by Theorem~\ref{erfr}, take $\ccu A := \frc{A}$ in each of our theories.
 
 \begin{theorem}\label{pichar}
 Suppose $P$ is a $\Pi^0_1$ interpreter of $A$ over Peano Corto.
 Then, ${\sf EA} \vdash P \iff \frc{A}$.
 \end{theorem}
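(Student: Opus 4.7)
The plan is to combine the uniqueness of interpreters (modulo ${\sf PA}^{\sf corto}$-provable equivalence) with the local cut-interpretability of Peano Corto in ${\sf S}^1_2$ to descend the equivalence from ${\sf PA}^{\sf corto}$ down to ${\sf EA}$.

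First, by Theorem~\ref{erfr}, $\frc{A}$ is itself an interpreter of $A$ over ${\sf PA}^{\sf corto}$. Since $P$ is by hypothesis also an interpreter of $A$ over ${\sf PA}^{\sf corto}$, the uniqueness clause for interpreters (the opening theorem of Section~\ref{basics}) yields ${\sf PA}^{\sf corto} \vdash P \iff \frc{A}$.

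Next I would pick a finite sub-theory $D \subseteq {\sf PA}^{\sf corto}$ that proves this equivalence and invoke the local cut-interpretability ${\sf S}^1_2 \rhd_{\sf cut,loc} {\sf PA}^{\sf corto}$ recalled in Subsection~\ref{cortoboven} to secure a cut $I$ of ${\sf S}^1_2$ with ${\sf S}^1_2 \vdash D^I$. Relativising the ${\sf PA}^{\sf corto}$-proof to $I$ gives ${\sf S}^1_2 \vdash (P \iff \frc{A})^I$.

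Finally I would pass to ${\sf EA}$. The cut $I$ produced by the standard shortening constructions is defined by a formula that is bounded in the EA-language with $\exp$ and is provably closed under $+$, $\cdot$, $\#$; hence EA's bounded induction yields ${\sf EA} \vdash \forall x\, I(x)$. Consequently ${\sf EA} \vdash \phi \iff \phi^I$ for every arithmetical $\phi$, and in particular for $P$ and for $\frc{A}$. Combining this with ${\sf S}^1_2 \subseteq {\sf EA}$, we obtain ${\sf EA} \vdash P \iff \frc{A}$.

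The main obstacle is confirming that the cut $I$ furnished by the invoked cut-interpretability is actually EA-trivial. This is a standard feature of shortening-style constructions in which the cuts close under the polynomial-time operations of ${\sf S}^1_2$, all of which EA verifies to be total on $\mathbb N$, but a careful proof would need to unpack the construction in~\cite{viss:pean14}. I note in passing that the $\Pi^0_1$ hypothesis on $P$ is not essential for this route; it is natural because the canonical witness $\frc{A}$ is itself $\Pi^0_1$, and the theorem isolates the characterisation within that class.
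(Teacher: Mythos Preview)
Your first two steps match the paper exactly: uniqueness of interpreters gives ${\sf PA}^{\sf corto} \vdash P \iff \frc{A}$, and local cut-interpretability then yields ${\sf S}^1_2 \vdash (P \iff \frc{A})^I$ for some ${\sf S}^1_2$-cut $I$. The divergence is in the final step, and the ``obstacle'' you flag there is a genuine gap.

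The paper does \emph{not} argue that $I$ is {\sf EA}-provably full. Instead it invokes a meta-theorem of Paris and Wilkie (\cite{wilk:sche87}, \cite{viss:insi92}; cf.\ the Remark following Theorem~\ref{frigar}): for $\Pi^0_1$-sentences $R,R'$ one has ${\sf PA}^{\sf corto} \vdash R \to R'$ iff ${\sf EA} \vdash R \to R'$. This is exactly where the $\Pi^0_1$ hypothesis on $P$ (together with the $\Pi^0_1$ form of $\frc{A}$) is used, contrary to your closing remark that it is inessential.

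Your alternative route, if it worked without the $\Pi^0_1$ restriction, would show that every ${\sf PA}^{\sf corto}$-theorem is an {\sf EA}-theorem. But the axioms of ${\sf PA}^{\sf corto}$ include $S \to S^I$ for \emph{every} ${\sf S}^1_2$-cut $I$, and such $I$ may have arbitrarily high quantifier complexity; {\sf EA}, with only $\Delta_0$-induction, cannot in general prove $\forall x\, I(x)$ for such $I$. Correspondingly, the cut witnessing the local cut-interpretation of a finite $D \subseteq {\sf PA}^{\sf corto}$ inherits complexity from the cuts appearing in the axioms of $D$, and there is no reason for it to be $\Delta_0(\exp)$. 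The claim that ``{\sf EA}'s bounded induction yields ${\sf EA} \vdash \forall x\, I(x)$'' is therefore unjustified, and the shortcut through {\sf EA}-triviality of the cut does not go through.
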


  \begin{proof}
  Suppose $P$ is a $\Pi^0_1$ interpreter of $A$ over $\paco$.
  Then, by uniqueness, we have $\paco \vdash P \iff \frc{A}$.
  So, for some ${\sf S}^1_2$-cut $J$, we have
  ${\sf S}^1_2 \vdash (P \iff \frc{A})^{J}$. Hence, by a meta-theorem of Paris and Wilkie,
 we have ${\sf EA} \vdash P \iff \frc{A}$. See \cite{wilk:sche87} and \cite{viss:insi92}.
 \end{proof}
   
 So, we have characterised $\frc{A}$ as a $\Pi^0_1$-sentence up to {\sf EA}-provable equivalence in
 a coordinate-free way. This improves the result of \cite{viss:seco11}, where this was only done for
 finitely axiomatised \emph{sequential} theories.
 
 We have a version of the Friedman characterisation over Peano Corto. 
 
 \begin{theorem}\label{frigar}
 Suppose $A$ is sequential. Then, 
 \[A\rhd B \;\;\text{ iff }\;\;\paco \vdash \frc{A} \to  \frc{B}.\]
 \end{theorem}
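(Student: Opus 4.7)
The plan is to treat the two directions separately. The forward direction follows immediately from what is already established. Since, by Theorem~\ref{erfr} together with the discussion in Subsection~\ref{cortoboven}, ${\sf PA}^{\sf corto}$ is Friedman-reflexive with ${\sf C}(X) := \frc{X}$, assuming $A \rhd B$ the minimality Theorem~\ref{minismurf} directly yields ${\sf PA}^{\sf corto} \vdash \frc{A} \to \frc{B}$. Sequentiality plays no role in this direction.

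For the converse I would proceed as follows. Suppose ${\sf PA}^{\sf corto} \vdash \frc{A} \to \frc{B}$. By compactness there is a finite sub-theory $T_0 \subseteq {\sf PA}^{\sf corto}$ with $T_0 \vdash \frc{A} \to \frc{B}$. Invoking ${\sf S}^1_2 \rhd_{\sf cut,loc} {\sf PA}^{\sf corto}$ from Subsection~\ref{cortoboven}, pick an ${\sf S}^1_2$-cut $J$ such that ${\sf S}^1_2 \vdash T_0^J$; relativising the external derivation along $J$ then yields ${\sf S}^1_2 \vdash \frcr{A}{J} \to \frcr{B}{J}$.

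Now sequentiality enters. Pudl\'ak's cut construction, applied to the sequential finitely axiomatised $A$, furnishes an interpretation $N$ of ${\sf S}^1_2$ in $A$ on a definable cut in $A$'s numbers, with the property $A \vdash \frcr{A}{N}$. Writing $N \circ J$ for the composite interpretation of ${\sf S}^1_2$ in $A$ obtained by restricting $N$ to its sub-cut defined by $J$, relativising the previous step along $N$ gives $A \vdash \frcr{A}{N \circ J} \to \frcr{B}{N \circ J}$. Since $\frcr{A}{\cdot}$ is $\Pi^0_1$ and hence downward-preserved along cuts, $A \vdash \frcr{A}{N}$ delivers $A \vdash \frcr{A}{N \circ J}$; modus ponens then yields $A \vdash \frcr{B}{N \circ J}$. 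The bounded form of the Interpretation Existence Lemma (as used in the proof of Theorem~\ref{erfr}), applied to the pair $(N \circ J, \frcr{B}{N \circ J})$, produces the desired interpretation $A \rhd B$.

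The principal technical point is the compatibility of the two cut manipulations on the shared sub-cut $N \circ J$. Local cut-interpretability supplies the ${\sf S}^1_2$-cut $J$ that internalises $T_0$, while Pudl\'ak's construction produces the cut $N$ on which $A$ proves its own bounded consistency; we need both properties to survive on the sub-cut $N \circ J$. Interpretation of $T_0$ on $N \circ J$ is immediate from ${\sf S}^1_2 \vdash T_0^J$ and $A \vdash ({\sf S}^1_2)^N$, and the $\Pi^0_1$-downward preservation of $\frcr{\cdot}{\cdot}$ is precisely what transports self-bounded-consistency from $N$ down to the shortened sub-cut. Neither step is difficult in isolation, but the proof depends on their joint applicability.
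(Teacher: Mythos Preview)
Your proof is correct and follows essentially the same approach as the paper. The paper presents the converse direction more tersely as a chain of interpretations $A \rhd ({\sf S}^1_2 + \frc{A}) \rhd ({\sf S}^1_2 + \frc{B}) \rhd B$, where the first step is exactly your Pudl\'ak input, the middle step is your relativisation along the local cut $J$, and the last is Interpretation Existence; you have simply unfolded this chain to work explicitly inside $A$ via the composite cut $N\circ J$.
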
 
 
 \begin{proof}
 The left-to-right direction is just Theorem~\ref{minismurf} in combination with the fact that we can take
  $\ccu A := \frc{A}$ over Peano Corto.
  
  From right to left: suppose $\paco \vdash \frc{A} \to  \frc{B}$.
  Then, ${\sf S}^1_2 \vdash  (\frc{A} \to  \frc{B})^I$, for some cut $I$.
  So, ${\sf S}^1_2 \vdash  \frc{A} \to  \frcr{B}{I}$. We have:
  \[ A \rhd ({\sf S}^1_2 +  \frc{A}) \rhd   ({\sf S}^1_2 + \frc{B}) \rhd B.\]
  The first step uses the sequentiality of $A$.
 \end{proof}
 
 \begin{remark}
 The original Friedman characterisation had {\sf EA} in place of Peano Corto. A  result due to 
 Paris and Wilkie (see \cite{wilk:sche87} and \cite{viss:insi92}) shows that we have, for $\Pi^0_1$-sentences $P$ and $Q$:
 \[ \paco \vdash P \to Q \;\;\;\Iff \;\;\; {\sf EA} \vdash P \to Q.\]
 So, the connection between the two results is obvious.
 However, the internal version of the characterisation in {\sf EA} needs cut-free {\sf EA}-provability and
 ordinary $\paco$-provability.
 \end{remark}
 
 \begin{remark}\label{cattwee}
The theory ${\sf Seq}(V)$ is specified as follows. We add a unary predicate $\dom$ and a binary predicate
 $\in$ to the signature of $V$, we relativise $V$ to $\dom$ and we add the (unrelativised) axioms for Adjunctive Set Theory {\sf AS}
 plus an axiom that states that every element of $\dom$ is an empty set. We can show that {\sf Seq} supports
 a functor from $\mathbb D$ to  $\mathbb D_{\sf seq}$ and from  $\mathbb D_{\sf fin}$ 
 to  $\mathbb D_{\sf fin,seq}$. See Appendix~\ref{notarissmurf} for details.
It is easily seen that we have ${\sf Seq}(A) \mutint ({\sf S}^1_2+ \frc A)$.

We see that we can split the functor $H$ of Remark~\ref{cateen} in two stages. First, we have a projection $\pi$ of
$\mathbb D_{\sf fin}$ to $\mathbb D_{\sf fin,seq}$. This can be either $A \mapsto {\sf Seq}(A)$ or
$A \mapsto ({\sf S}^1_2+ \frc A)$. Then, we have a (lax)  embedding of $\mathbb D_{\sf fin,seq}$
into ${\mathbb B}_{\paco}$.
 \end{remark}
  
Suppose $K:A \rhd \paco$.
Since, we can apply the G\"odel Fixed Point Theorem in the usual way because \ccz\ can be represented by a predicate, 
we have L\"ob's Logic.
This also follows from Corollary~\ref{tuinmansmurf} in combination with the fact that $\paco$ is sequential.
That theorem, however, has a much more involved proof.

If $K$ is $\Sigma^0_1$-sound,  $\Lambda_K^{\sf fr}$ is
 precisely L\"ob's Logic. In the case of $\paco$  we can verify Solovay's Theorem simply using Solovay's proof.
 The reason is that $\paco$ proves that
 $\exists x,y\, (2^{2^x}=y \wedge S_0(x))$ from $\exists x\, S_0(x)$, where $S_0$ is $\Delta_0$ or $\Delta_0(\omega_1)$.
 This delivers $\Sigma^0_1$-completeness. This argument is not present for $\paoo$. However, we still have Solovay's Theorem
 for $\paoo$ as a special case of Theorem~\ref{solosmurf}.

 We can see that Peano Corto has some definite advantages over ${\sf S}^1_2$ in the role
 of base theory. We have a coordinate-free representation of the interpreter variant of provability.
 Moreover, we have the insights contained in Theorems~\ref{pichar} and \ref{frigar} and the good properties
 of the interpreter logics over Peano Corto. However, there is a down-side too.
 \begin{enumerate}[I.]
 \item
 Peano Corto is not finitely axiomatisable. 
 \item
 Peano Corto is not  interpretable in ${\sf S}^1_2$. If it were it would be mutually interpretable with
 ${\sf S}^1_2$ and this contradicts Theorem~\ref{smartsmurf} that we will prove later. In fact, no sequential Friedman-reflexive theory
 is interpretable in ${\sf S}^1_2$ by the same argument. As a consequence, there are no interpreter
 logics for ${\sf S}^1_2$ with Peano Corto as base (or, with any sequential Friedman-reflexive base). 
 \item
Even if Peano Corto is interpretable in some reasonably weak concrete $A$, like {\sf EA}, it is not always clear that
we can find an interpretation that does not involve arithmetisation. We discuss this kind of problem in Section~\ref{corema}.
 \end{enumerate}

 \section{Friedman-reflexivity meets Sequentiality}
 We already met  some specific sequential theories that are essentially sententially reflexive.
 In this section, we look at sequential theories that are Friedman-reflexive in general.
 Moreover, we look at interpreter logics for sequential $A$, also in cases where the base is not
 itself sequential.
 
 \subsection{Characterisation}
In this subsection, we provide  characterisations both of the interpreters provided by sequential Friedman-reflexive bases
and of such bases themselves.

We show that $\ccu A$ always has the form of a restricted consistency statement of $A$ on some cut.

\begin{theorem}\label{crc}
Suppose $U$ is sequential and Friedman-reflexive. Let $N:{\sf S}^1_2 \lhd U$. Then, for some $N$-cut $I$, we have
$U \vdash \ccu A \iff \frcr{A}{I}$. 

In case $U$ is RE and effectively Friedman-reflexive, we can find $I$ effectively.
\end{theorem}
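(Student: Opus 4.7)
The plan is to prove the two halves of the biconditional separately and then address effectivity. The easy direction $U \vdash \frcr{A}{I} \to {\sf C}(A)$ holds for \emph{any} $N$-cut $I$: the Interpretation Existence Lemma applied inside $N$ on the cut $I$ yields $(U+\frcr{A}{I}) \rhd A$, so $\frcr{A}{I}$ is a pro-interpreter of $A$ over $U$, and the weakest-property of ${\sf C}(A)$ forces the implication. This step uses nothing beyond the definition of interpreter together with the Interpretation Existence Lemma on a cut.

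The core of the argument is the converse. Set $V := U + {\sf C}(A)$; then $V$ is sequential with the same arithmetisation $N$, and $V \rhd A$ by the defining property of ${\sf C}$. I would then invoke the sequential converse of the Interpretation Existence Lemma, essentially a theorem of Pudl\'ak (compare the treatment in \cite{viss:seco11}): whenever a sequential theory $V$ interprets a finitely axiomatised $A$, there is a cut $J$ in the arithmetic of $V$ such that $V \vdash \frcr{A}{J}$. Applied here with the arithmetic supplied by $N$, this produces an $N$-formula $J$ with $U+{\sf C}(A)\vdash \frcr{A}{J}$ and $U+{\sf C}(A)\vdash {}$``$J$ is an $N$-cut''. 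The weak point is that $J$ is a cut only conditional on ${\sf C}(A)$, whereas the statement asks for an $N$-cut in $U$ outright.

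To repair this I would absorb the assumption into the cut itself by setting
\[ I(x) \;:=\; J(x) \vee \neg\,{\sf C}(A). \]
Under ${\sf C}(A)$ the predicate $I$ coincides with $J$ on the $N$-domain, hence is a cut since $J$ is; under $\neg\,{\sf C}(A)$ it holds throughout the $N$-domain and is trivially a cut. Thus $U$ itself proves that $I$ is an $N$-cut. Moreover, under ${\sf C}(A)$ the sentences $\frcr{A}{I}$ and $\frcr{A}{J}$ coincide, giving $U \vdash {\sf C}(A) \to \frcr{A}{I}$. Combined with the easy direction, this yields the desired equivalence.

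For the effective version, assume $U$ is RE and ${\sf C}$ is recursive. Given $A$, compute ${\sf C}(A)$; since $V$ is RE and $V \rhd A$, search through candidate translations $\tau$ and $V$-proofs of $(E_A \wedge A)^\tau$ to locate explicit interpretation data witnessing $V \rhd A$; feed that data into the effective construction underlying Pudl\'ak's theorem to extract $J$; then form $I$ mechanically from $J$ and ${\sf C}(A)$. The main obstacle I anticipate is not the easy direction nor the cut-repair trick, but the appeal to the sequential converse of the Interpretation Existence Lemma, and in particular its effective content for the last clause of the theorem.
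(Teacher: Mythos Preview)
Your proof is correct and follows the same line as the paper's: the easy direction via Interpretation Existence and the hard direction via the Pudl\'ak-style soundness argument with a partial truth predicate are exactly what the paper does, and the effectivity clause is handled in the same way (search for the interpretation data, then read off the cut).

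The one point of divergence is your cut-repair trick $I(x) := J(x) \vee \neg\,{\sf C}(A)$. The paper does not need it, because it unfolds the construction rather than using Pudl\'ak's result as a black box: the cut it produces is determined purely by the complexity bound $m = \rho(A)+\rho(K)$ and by the $N$-cut $I^\ast$ on which ${\sf true}_m$ lives, and such cuts are already ${\sf S}^1_2$-cuts, hence $N$-cuts provably in $U$, independent of ${\sf C}(A)$. Your repair is harmless and has the virtue of making the argument go through even when the Pudl\'ak theorem is invoked opaquely.
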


\begin{proof}
We have $(U+\ccu A) \rhd A$. Let $K$ be a witnessing interpretation.
 It follows, for some $N$-cut $I$, that $(U+\ccu A) \vdash \frcr{A}{I }$.
We can see that by choosing $I$ so short that we can verify reflection for proofs
involving only formulas of $\rho$-complexity $\leq m := \rho(A)+\rho(K)$ w.r.t. a truth-predicate for formulas
of $\rho$-complexity $\leq m$. This truth-predicate 
works on an appropriate $N$-cut $I^\ast$. We choose $I$ smaller than $I^\ast$.
So, we have $U+\ccu A \vdash \frcr{A}{I}$. 

Conversely, since, $(U+\frcr{A}{I}) \rhd A$, it follows
that $U+\frcr{A}{I} \vdash \ccu A$.

Trivially, $I$ can be effectively found when $\ccu A$ is given and $U$ is RE.
\end{proof}

We provide a characterisation of Friedman-reflexivity in the sequential case.

\begin{theorem}\label{characunus}
Suppose $U$ is sequential. 
\begin{enumerate}[A.]
\item
The following are equivalent.
\begin{enumerate}[a.]
\item
$U$ is Friedman-reflexive.
\item
For all $\Sigma^0_1$-sentences $S$, there is an $N:U \rhd {\sf S}^1_2$, such that,
for all $M:U \rhd {\sf S}^1_2$, we have $U \vdash S^N \to S^M$.
\item
Consider any $N^\ast:{\sf S}^1_2 \lhd U$. Then,  for all $\Sigma^0_1$-sentences $S$,
there is an $N^\ast$-cut $I$ such that for all $N^\ast$-cuts $J$, we have $U \vdash S^I \to S^J$.
\end{enumerate}
\item
The following are equivalent.
\begin{enumerate}[a.]
\item
$U$ is effectively Friedman-reflexive.
\item
There is a recursive function $F$ such that, for all $\Sigma^0_1$-sentences $S$, we have $F(S) =N:U \rhd {\sf S}^1_2$ and,
for all $M:U \rhd {\sf S}^1_2$, we have $U \vdash S^N \to S^M$.
\item
Consider any $N^\ast:{\sf S}^1_2 \lhd U$. There is a recursive function $G$ such that,
 for all $\Sigma^0_1$-sentences $S$,
we have $G(S)= I$, where $I$ is an $N^\ast$-cut such that for all $N^\ast$-cuts $J$, we have $U \vdash S^I \to S^J$.
\end{enumerate}
\end{enumerate}
\end{theorem}

\begin{proof}
We will just prove the equivalence between (Aa) and (Ac). The equivalence
between (Ab) and (Ac) is immediate using the fact that any two interpretations of ${\sf S}^1_2$ in $U$
have  $U$-definably, $U$-verifiably isomorphic cuts. The proof of (B) is by inspection of the proof of (A).

Suppose that $U$ is Friedman-reflexive and $N^\ast:U \rhd {\sf S}^1_2$. 
Consider any $\Sigma^0_1$-sentence $S$. 
We note that $A :={\sf S}^1_2+\neg\,S$ is finitely axiomatised. 
Let $I_0$ be the $N$-cut such that $\aco^{I_0}_{{\sf S}^1_2}\neg S$ is $U$-provably equivalent to
$\ccu A$. We find $U \vdash \aco^J_{{\sf S}^1_2}\neg S\to \aco^{I_0}_{{\sf S}^1_2}\neg S$, for all
$N$-cuts $J$. Thus, 
 $U \vdash \apr^{I_0}_{{\sf S}^1_2} S\to \apr^J_{{\sf S}^1_2} S$, for all $N$-cuts $J$.
 
 Let $I$ be an $N$-cut so that $U \vdash S^I \to  \apr^{I_0}_{{\sf S}^1_2} S$: see Theorem~\ref{sicosmurf}.
 Consider any $N$-cut $J$. By sequentiality, 
 we can find a $J$-cut $J_0$ so that $U \vdash  \apr^{J_0}_{{\sf S}^1_2} S \to S^J$.
 This uses again a soundness proof involving a truth-predicate.
 Putting everything together we find:
 \begin{eqnarray*}
 U \vdash S^I & \to & \apr^{I_0}_{{\sf S}^1_2} S \\
 & \to & \apr^{J_0}_{{\sf S}^1_2} S \\
 & \to & S^J
 \end{eqnarray*}
 
 Conversely, suppose $U$ satisfies (c). 
  Let $I$ be the $N^\ast$-cut guaranteed by (c) for $S:= \apr_{A}\bot$.
  Suppose $(U+B) \rhd A$. Then, for some $N$-cut $J$, we have $U+B \vdash \frcr{A}{J}$
  and hence $U+B \vdash  \frcr{A}{I}$. The other direction is immediate by Interpretation Existence.
\end{proof}

\begin{remark}
We note that a complete and consistent sequential theory will automatically have property (c) of Theorem~\ref{characunus}.
Of course, it should, by Theorems~\ref{smurferella} and \ref{characunus}.
\end{remark}

\begin{remark}
Our result is rather robust for the precise notion of $\Sigma^0_1$ used. 
The result works both for smaller classes and for larger ones.

It works for $\Sigma_1^{\sf b}$ and even for Diophantine sentences consisting of a block of existential
quantifiers followed by an equation $t=u$.

In the other direction, the result also applies when we admit $\omega_1$-terms in our definition of
$\Sigma^0_1$. Finally, it works when we define our class $X$ as follows:
\begin{itemize}
\item
$X ::= \top \mid \bot \mid t=u \mid \neg Y \mid
(X\wedge X) \mid (X\vee X) \mid (Y \to X) \mid \forall x < t\, X \mid \exists x\, X$
\item  
$Y ::= \top \mid \bot \mid t=u \mid \neg X \mid
\,(Y\wedge Y) \mid \,(Y\vee Y) \mid \,(X \to Y) \mid \exists x < t\, Y \mid \forall x\, Y$
\end{itemize}
\end{remark}

We give a slightly modified version of our characterisation.

\begin{theorem}\label{mocha}
Suppose $U$ is sequential and let $N:U\rhd {\sf S}^1_2$. Then,
 $U$ is Friedman-reflexive iff \textup{(\dag)} for all $\Sigma^0_1$-sentences $S$, there is an $U$ sentence $A$, such that,
 for all $U$-sentences $B$, we have $U +\verz{S^I\mid \text{$I$ is an $N$-cut}} \vdash B$ iff $U+A \vdash B$.
\end{theorem}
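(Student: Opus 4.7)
The plan is to reduce this to the already-proved characterisation in Theorem~\ref{characunus}, by viewing condition (\dag) here as a finite-axiomatisability version of the ``smallest $N$-cut'' condition there. The key translation is: for a fixed $\Sigma^0_1$-sentence $S$, asking that $U + \verz{S^I \mid I \text{ an }N\text{-cut}}$ be axiomatisable over $U$ by a single sentence $A$ is essentially the same as demanding a single $N$-cut that dominates all the others with respect to $S$.

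For the forward direction, I would assume $U$ is Friedman-reflexive and invoke Theorem~\ref{characunus} to obtain, for each $\Sigma^0_1$-sentence $S$, an $N$-cut $I_0$ such that $U \vdash S^{I_0} \to S^J$ for every $N$-cut $J$. Then I take $A := S^{I_0}$: clearly $U + A \vdash S^J$ for every $N$-cut $J$, so $U + A$ proves every sentence that $U + \verz{S^I \mid I \text{ an }N\text{-cut}}$ proves; conversely, since $I_0$ is itself an $N$-cut, $U + \verz{S^I \mid I \text{ an }N\text{-cut}} \vdash A$, so the two theories are identical.

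For the reverse direction, I would assume (\dag) and verify the condition of Theorem~\ref{characunus}. Given a $\Sigma^0_1$-sentence $S$, let $A$ be the sentence promised by (\dag). Taking $B := A$ in (\dag), we have $U + \verz{S^I \mid I \text{ an }N\text{-cut}} \vdash A$, so by compactness there exist $N$-cuts $I_1, \dots, I_k$ with $U + S^{I_1} + \cdots + S^{I_k} \vdash A$. Using sequentiality of $U$, I would pick a common $N$-subcut $I \subseteq I_1 \cap \cdots \cap I_k$. Since $\Sigma^0_1$-sentences are preserved when passing from a smaller cut to a larger one (a witness in $I$ is still a witness in $I_i \supseteq I$), one gets $U \vdash S^I \to S^{I_i}$ for each $i$, and hence $U \vdash S^I \to A$. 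Taking any $N$-cut $J$ and $B := S^J$ in (\dag), we have $U + A \vdash S^J$, so $U \vdash S^I \to S^J$ for every $N$-cut $J$. Theorem~\ref{characunus} then delivers Friedman-reflexivity.

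The only non-trivial ingredient is the existence of the common subcut $I$ together with the implications $S^I \to S^{I_i}$; this is standard for sequential theories, where intersections of finitely many definable cuts can always be refined to a single cut, and $\Sigma^0_1$-sentences are upward-preserved along inclusions of cuts. No further obstacle seems to arise, since the rest is essentially a repackaging of compactness and Theorem~\ref{characunus}.
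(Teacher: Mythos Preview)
Your proposal is correct and follows essentially the same route as the paper: both directions reduce to Theorem~\ref{characunus}, using $A:=S^{I_0}$ for the forward direction and, for the converse, compactness to extract finitely many cuts whose intersection furnishes the required witness cut. The only cosmetic difference is that the paper phrases the converse as obtaining $U\vdash A\iff S^J$ for the intersection cut $J$, whereas you proceed directly to $U\vdash S^I\to S^J$; the content is the same.
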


\begin{proof}
Suppose $U$ is Friedman-reflexive. Let $I^\ast$ be the cut guaranteed  for $S$ by Theorem~\ref{characunus}(c).
Then, it is easy to see that $S^{I^\ast}$ can be chosen as our $A$ to satisfy (\dag).

Conversely, suppose (\dag). It is clear that $U+A$ proves all $S^I$. On the other hand, taking $B:=A$, we see that some finite
conjunction of the $S^I$ will imply $A$ over $U$. We now take $J$ the intersection of all cuts occurring in this finite conjunction.
We find that $U \vdash A \iff S^J$. We take $J$ as witness for satisfaction of the characterisation of Theorem~\ref{characunus}(c).
\end{proof}

We give a final version of our characterisation that is both useful and enlightening.
We need the notion of intersection of all cuts.
Consider a sequential model $\mathcal M$. We define $\mathfrak I_{\mathcal M}$ as follows.
First, we choose an internal model $\mathcal N$ of ${\sf S}^1_2$ and then we take $\mathfrak I_{\mathcal M}$ to be the
intersection of all $\mathcal M$-definable $\mathcal N$-cuts. Using elementary facts about sequentiality, one can
easily show that $\mathfrak I_{\mathcal M}$ is independent of the choice of $\mathcal N$ in the sense that
all versions are isomorphic by $\mathcal M$-definable isomorphism. Moreover, this isomorphism is unique when restricted to
the intersection. See also \cite[Section 5.1]{viss:smal19}. 

Consider a sequential theory $U$ and let $N:{\sf S}^1_2 \lhd U$. We extend the language of $U$ with a new unary predicate $\mathfrak I$
that is interpreted in each $U$-model $\mathcal M$ as $\mathfrak I_{\mathcal M}$. Here we think of $\mathfrak I_{\mathcal M}$
as given by $\mathcal N := N^{\mathcal M}$. 
Let $U^{\sf e}$ be the set of all sentences in the extended language true in all $\mathcal M, \mathfrak I_{\mathcal M}$, where
$\mathcal M$ is an $U$-model.
Let $\mathcal I_U$ be the set of arithmetical sentences $A$ such that $U^{\sf e} \vdash A^{\mathfrak I}$.

An important insight is that $\mathcal I_U$ contains ${\sf EA}+\mathrm B\Sigma_1$. See \cite[Section 5.1]{viss:smal19}.

\begin{theorem}
Suppose $U$ is sequential and let $N: U \rhd {\sf S}^1_2$. 
Then, $U$ is Friedman-reflexive iff, for all $\Sigma^0_1$-sentences $S$, there is an $U$ sentence $B$, such that we have
$U^{\sf e} \vdash S^{\mathfrak I} \iff B$. Moreover, $B$ can always be taken to be of the form $S^I$ for some $N$-cut $I$.

$U$ is effectively Friedman-reflexive iff we can find $B$ \textup(or, if you wish, $I$\textup) effectively.
\end{theorem}

\begin{proof}
Suppose $U$ is sequential.
 If $U$ is Friedman-reflexive, then,  the $S^I$ provided by Theorem~\ref{crc}
 gives us the $B$ we are looking for.

Suppose $U^{\sf e} \vdash S^{\mathfrak I} \iff B$. Then, $U+\verz{S^I \mid \text{$I$ is an $N$-cut}} \vdash B$
and, conversely, $U+B \vdash S^I $, for all $N$-cuts $I$. By Theorem~\ref{mocha}, it follows that $U$ is Friedman-reflexive.
Clearly, if we can find the $B$ effectively, then $U$ is effectively Friedman-reflexive.
\end{proof}

So if we view the $S^{\mathfrak I}$ as a second-order or as an infinitary statement, then Friedman-reflexiveness means
a reduction to first-order or finitary statements. 

\subsection{An Example: the Theory \desca}
We provide an example of an effectively Friedman reflexive theory that is not essentially sententially reflexive.
We call the theory of our example \desca\ (Descending Arithmetic). Giving it a name does make it seem like a definite thing. So, it is good to point out
that the theory does depend on two arbitrarily chosen enumerations. 

Let $S_0,S_1,\dots$ enumerate the $\Sigma^0_1$-sentences and let $I_0,I_1,\dots$ be an effective enumeration of ${\sf S}^1_2$-cuts
such that ${\sf S}^1_2 \vdash I_{n+1} \subseteq I_n$ and such that, for each ${\sf S}^1_2$-cut $J$, we can find
a $k$ such that ${\sf S}^1_2\vdash I_k \subseteq J$. Briefly said, $(I_k)_{k\in \omega}$ is effective, descending, and co-initial with all cuts. 

We note that, because, in sequential theories, we have truth-predicates for formulas with $\rho$-complexity below a given number,
we can take $I_n$ to be the intersection of all definable cuts with $\rho$-complexity $\leq n$.

Let \desca\ be ${\sf S}^1_2 + \verz{S^{I_i}_i \to S_i^{J} \mid i\in \omega \text{ and $J$ is a definable cut}}$.
Clearly, \desca\ is effectively Friedman-reflexive.
We note that \desca\ is a sub-theory of $\paco$ and, thus, locally cut-interpretable in ${\sf S}^1_2$.  

Let us say that a theory $V$ is \emph{restrictedly} Friedman-reflexive iff there is an $n$ and a mapping $A \mapsto \ccu A$, where
$\rho(\ccu A) \leq n$, for all $A$. 
It is easy to see that in case $V$ is a \emph{sequential} restrictedly Friedman-reflexive theory, then, for any $N:V \rhd {\sf S}^1_2$, there is
a formula $C(x)$ such that, for all $A$, we have $V \vdash \ccu A \iff C(\gn A)$, where the G\"odel numbers are chosen
w.r.t. $N$. Another immediate insight is that, if $V$ is essentially sententially reflexive, then $V$ is restrictedly Friedman-reflexive. 



\begin{theorem}\label{norep}\label{checkchecksmurf}
The theory \desca\ is not restrictedly Friedman-reflexive and, hence, not essentially sententially reflexive.
\end{theorem}

\begin{proof}
Suppose \desca\ were restrictedly Friedman-reflexive with bound $k_0$.
Let $\rho({\sf S}^1_2) = k_1$.
Suppose $I_p$ is the first logarithmic cut in the sequence. And let $k_2$ be the
maximum of the $\rho$-complexities of the $S_i$ for $i<p$. Finally, let $k_3$ be the complexity
of a standard $\Sigma^0_1$-truth predicate {\sf true}. Let $k$ be the maximum of $k_0$, $k_1$, $k_2$, $k_3$.
 We pick
$n$ so large that $I_n$ is $\Sigma^0_1$-sound for every consistent extension 
of ${\sf S}^1_2$ with complexity $\leq k$. The existence of such a cut is guaranteed by Theorem~\ref{hulpsmurf}.
We may assume that $n>p$.
Let 
\begin{multline*}
A := {\sf S}^1_2+ \verz{\neg S_i \mid \text{$i< p$ and $S_i$ is false}}+ \\
\verz{\neg\, {\sf true}(S_j) \mid \text{$p \leq j < n$ and $S_j$ is false}}
\end{multline*}
and let $B := {\sf S}^1_2 + \oco_A \top$.
We note that $\rho(A) \leq k$. 

We claim that $A +\neg\, C(\gn B)$ is consistent. Suppose it were not. Then, we would have $A \vdash C(\gn B)$. 
It follows that $\desca +A \vdash  \ccu B$, and, hence, $(\desca+A) \rhd B$. Since, $A$ locally cut-interprets $\paco+A$ and, hence, 
$\desca+A$, we find that
$A \rhd B$. \emph{Quod non}, by the usual no-interpretation version of G2.

By the special property of $I_n$, it follows that \[V:= A +\neg\, C(\gn B)+ \verz{\neg S^{I_n}_i \mid \text{$i\geq  n$ and $S_i$ is false}}\] is
consistent. By Theorem~\ref{sicosmurf},  we have ${\sf S}^1_2 \vdash \neg\, {\sf true}(S) \to \neg\, S^{I_p}$, for any $\Sigma^0_1$-sentence $S$. It follows that
$V$ extends \desca. Hence, $V$ is Friedman-reflexive with \ccz\ as selection function and
$V \vdash \neg \ccu B$.
Since $V$ proves every true $\Pi^0_1$-sentence, including $\oco_B\top$, on $I_n$,
we find $V \rhd B$ and, hence $V \vdash  \ccu B$. A contradiction.
\end{proof}

\begin{question}\label{zeursmurf}\label{q5}
\begin{enumerate}[i.]
\item
Is \desca\ reflexive?
If, against expectation, it turns out to be reflexive, 
can we modify the construction to find a non-reflexive, Friedman-reflexive, sequential theory?
\item
 Is there 
a finitely axiomatised $A$ and $K: A\rhd \desca$, such that, for no $D(x)$ in the $A$ language,
we have, for all $B$ in the $A$-language,  $A \vdash D(\gn B) \iff \graydi_{K,A}B$\,?
Here the numerals are the $K$-numerals.
\item
Is there an RE sequential theory that is Friedman-reflexive but not effectively so?
\item
Suppose $U$ is sequential and restrictedly (effectively) Friedman-reflexive. Does it follow that $U$ is essentially
sententially reflexive?
\end{enumerate}

\noindent
We note that our proof of Theorem~\ref{norep} uses special features of \desca. So, the proof does not
 generalise, in an obvious way, to a proof of a positive answer to (iv).
\end{question}

\subsection{Constraints}
In this subsection, we prove two results that constrain the form of consistent, sequential, Friedman-reflexive theories.

\begin{theorem}\label{restraintsmurf}
 Suppose $U$ is consistent, Friedman-reflexive, sequential and RE. Then, any axiomatisation of $U$ must have axioms
 of $\rho$-complexity $>n$, for any $n$.
 \end{theorem}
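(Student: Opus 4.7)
The approach is to argue by contradiction, modeled on the proof given in the Interpretation Power subsection that ${\sf S}^1_2$ is not Friedman-reflexive. Assume that $U$ has an axiomatisation $\alpha$ with every axiom of $\rho$-complexity at most $n$, and fix an interpretation $N:{\sf S}^1_2 \lhd U$ witnessing sequentiality. Pick a consistent, finitely axiomatised $A$ with $U \nrhd A$; such an $A$ exists because not every consistent finitely axiomatised theory can be interpreted in a single RE theory (for instance, ${\sf S}^1_2$ plus a consistency statement for $\alpha$ expressed via $N$ will do, by G2 applied inside the interpretation). Set $C := {\sf C}(A)$; then $U \nvdash C$, so $T := U + \neg\, C$ is consistent.

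By the Interpretation Existence Lemma, $(U + \frcr{A}{I}) \rhd A$ for every $N$-cut $I$, so Friedman-reflexivity yields $U + \frcr{A}{I} \vdash C$, equivalently $T \vdash \neg\, \frcr{A}{I}$, i.e.\ $T$ proves $\opr^I_{\rho(A),A}\bot$ for every $N$-cut $I$. Moreover, by Theorem~\ref{crc}, $C$ is $U$-equivalent to $\frcr{A}{I_A}$ for some $N$-cut $I_A$, so $T$ can be axiomatised by $\alpha$ together with the single extra sentence $\neg\, \frcr{A}{I_A}$. The goal is now to invoke Theorem~\ref{hulpsmurf} applied to $T$: since $T$ is a consistent RE extension of ${\sf S}^1_2$ (via $N$) that proves $\opr^I_{\rho(A),A}\bot$ for every $N$-cut $I$, the theorem should yield that $\opr_A\bot$ is actually true, contradicting the consistency of $A$.

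The main obstacle is that Theorem~\ref{hulpsmurf}, as used earlier, had ${\sf S}^1_2$ itself as base, whereas here we need a version applicable to the richer $T$. This is precisely where the bounded $\rho$-complexity of $\alpha$ pays off: the axioms of $T$ coming from $\alpha$ have $\rho$-complexity at most $n$, and by inspection of the proof of Theorem~\ref{crc} the cut $I_A$ can be chosen of $\rho$-complexity bounded in $n$ and $\rho(A)$---it arises from a suitable truth-predicate $N$-cut followed by a doubly-logarithmic shortening, both operations of bounded complexity. Hence every axiom of $T$ has $\rho$-complexity bounded by a fixed function of $n$ and $\rho(A)$. Completing the proof therefore reduces to showing that the Herbrand-style extraction of a true $\Sigma^0_1$-statement underlying Theorem~\ref{hulpsmurf} goes through for any consistent RE extension of ${\sf S}^1_2$ whose additional axioms have uniformly bounded $\rho$-complexity, which I would verify by retracing that proof with the bounded-complexity bookkeeping made explicit.
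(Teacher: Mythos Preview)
Your approach is correct and is essentially the paper's own proof: pick a consistent finitely axiomatised $A$ with $U\nrhd A$, form $T:=U+\neg\,{\sf C}(A)$, observe that $T$ proves $\opr_A^{I}\bot$ on every $N$-cut $I$, and invoke Theorem~\ref{hulpsmurf} to conclude that $A$ is inconsistent.

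However, your last paragraph manufactures an obstacle that is not there. Theorem~\ref{hulpsmurf} as stated in the paper applies to \emph{any} consistent restricted sequential RE theory, not merely to ${\sf S}^1_2$. Since $U$ is restricted by hypothesis and $T$ is obtained from $U$ by adding the single sentence $\neg\,{\sf C}(A)$, the theory $T$ is automatically restricted with bound $\max(n,\rho(\neg\,{\sf C}(A)))$. There is no need to pass through Theorem~\ref{crc}, no need to control the $\rho$-complexity of the cut $I_A$, and no need to retrace the proof of Theorem~\ref{hulpsmurf} with extra bookkeeping: one fixed extra sentence always has some fixed complexity, and that is all ``restricted'' requires. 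The paper's proof accordingly applies Theorem~\ref{hulpsmurf} directly to $U+\neg\,{\sf C}(A)$ in a single line.
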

 
 \begin{proof}
 Suppose that $U$ is consistent, Friedman-reflexive, sequential and RE and that $U$ has a restricted axiomatisation. 
 We fix $N:U \rhd {\sf S}^1_2$.  Let $A $ be any consistent finitely axiomatised theory such that $U \nrhd A$.
 We find that $U \nvdash \ccu A$. So, $U + \neg\,\ccu A$ is consistent.

We have, for all $N$-cuts $I$, that  $(U+\oco_A^I\top)\rhd A$. So, $U+\oco_A^I\top\vdash \ccu A$.
It follows that $U+\neg\, \ccu A\vdash \opr_A^I\bot$. So, by Theorem~\ref{hulpsmurf},
 we find that $\opr_A\bot$ is true and, thus, that $A$ is inconsistent.  \emph{Quod non.}
\end{proof}
 
So, e.g., neither {\sf PRA}, nor $\mathrm I\Sigma_n$, nor ${\sf ACA}_0$ is Friedman-reflexive. 

\begin{remark}
We note that we could, alternatively, have framed the proof of Theorem~\ref{restraintsmurf} as follows.
Theorem~\ref{hulpsmurf} tells us that each consistent finite extension of $U$ tolerates ${\sf S}^1_2$ plus
all true $\Pi_1$-sentences and, hence, is polyglottic. On the other hand, clearly, there is an $A$ such that
$U \nrhd A$ and so $U+\neg\, \ccu A$ is consistent. By Theorem~\ref{kwaaksmurf}, this last theory is not polyglottic.
A constradiction.
\end{remark}

 \begin{example}
 Since, Peano Corto is both reflexive and mutually locally interpretable with ${\sf S}^1_2$, we find that
 $\paco \mutint \mho({\sf S}^1_2)$. Also, $\mho({\sf S}^1_2)$ is restricted and, hence, not
 Friedman-reflexive. Ergo, Friedman-reflexivity is not preserved by mutual interpretability.
 \end{example}

We have the following insight.
 
 \begin{theorem}\label{smartsmurf}
 Suppose $A$ is finitely axiomatised
 and $U$ is Friedman-reflexive. Suppose further that one of $A$, $U$ is sequential. Then, $A$ and $U$ are irreconcilable.
  \end{theorem}
 
 \begin{proof}
 Suppose $A$ is finitely axiomatised and $U$ is Friedman-reflexive.
 Since, both the property demanded for $A$ and the property demanded $U$ are closed under finite extensions,
 it is sufficient to show that $A$ and $U$, if consistent, are not mutually interpretable. Suppose $A$ and $U$ are consistent.
 
 We first consider the case, where $A$ is sequential. Suppose
 $K:U \rhd A$ and $M:A\rhd U$. Consider any finitely axiomatised $B$ such that $B$ is consistent and $A \nrhd B$.
 For example, we could take $B := ({\sf S}^1_2+\oco_A\top)$. 
 
 We have (a) $A + \neg \ccdu MB$ is consistent, since otherwise 
 \[ A \vdash (A+  \ccdu MB) \rhd (U+ \ccu B) \rhd B.\]
 \emph{Quod non}. 
 We have:
 \[ (U+  \ccdu{MK}B) \rhd (A+ \ccdu MB) \rhd (U+ \ccu B) \rhd B.\]
 So,   $U+  \ccdu {MK}B \vdash  \ccu B$. Ergo,
 $A+  \ccdu {MKM} B \vdash  \ccdu MB$, and hence, $A+ \neg\,  \ccdu MB \vdash \neg \ccdu{MKM}B$.
 It follows that (b) $(M\circ K)': (A + \neg  \ccdu MB) \rhd (A + \neg  \ccdu MB)$, where $(M\circ K)'$ is the interpretation
 based on the same translation as $M\circ K$.

Suppose $N:A \rhd {\sf S}^1_2$. Let $I$ be any $N$-cut in $A$. We have
\[(U+\oco_B^{IK}\top) \rhd (A+\oco_B^I \top) \rhd ({\sf S}^1_2 + \oco_B\top) \rhd B.\]
So $U + \oco_B^{IK}\top \vdash  \ccu B$. It follows that $U+ \neg\ccu B \vdash \opr_B^{IK}\bot$, and, hence, we have
$A+ \neg\ccdu MB \vdash \opr_B^{IKM}\bot$. 

We have shown: (c) for all $N$-cuts $I$, we have $A+ \neg\ccdu MB \vdash \opr_B^{IKM}\bot$. Combining (a), (b), and (c), we find, by
Theorem~\ref{zustersmurf2}, that $\opr_B\bot$ is true and, thus, that $B$ is inconsistent. \emph{Quod non}.

We now turn to the case that $U$ is sequential. Suppose $A \mutint U$. Then, we can find a finitely axiomatised sequential $U_0 \subseteq U$,
such that $A \mutint U_0$ and, hence $U_0 \mutint U$, contradicting the first case. 
 \end{proof}
 
 \noindent 
 An alternative argument, for the case that $U$ is sequential, is to note that ${\sf Seq}(A) \mutint U$, where {\sf Seq} is
 the functor that adds sequentiality to $A$.

We note Theorem~\ref{smartsmurf} implies that there is no Friedman-reflexive and sequential $U$
such that ${\sf S}^1_2 \rhd U$. The reason is that any sequential theory interprets ${\sf S}^1_2$.
Of course, ${\sf S}^1_2$ does interpret the theory of the ordering of the natural numbers. However, that
only gives the trivial interpreter logic that proves $\opr\bot$.
So, one might wonder whether there is a more interesting base for ${\sf S}^1_2$.
We will discuss some hopeful signs that there is in Section~\ref{corema}.

\subsection{Interpreter Logic}\label{wijsneuzigesmurf}
In this subsection, we will be concerned with interpreter logics over sequential Friedman-reflexive bases. 
These interpreter logics satisfy L\"ob's Logic. We will show that, if the base
is, in addition, \emph{effectively} Friedman-reflexive, then the proof of Solovay's Theorem
works with minor adaptations.

\subsubsection{Soundness}
Combining Theorems~\ref{whenloeb} and \ref{smartsmurf}, we find:
 
 \begin{theorem}\label{tuinmansmurf}
  Let $\tupel{A,U}$ be an FM-frame. Suppose that one of $A$, $U$ is sequential. 
 Then, $\Lambda^{\sf fr}_{\tupel{A,U}}$ extends {\sf GL}.
 \end{theorem}

Since we have L\"ob's Logic when either $A$ or $U$ is sequential, we also have explicit solutions for modal equations where the
fixed point variable is guarded. E.g., $A \vdash \graydi \top \iff \neg\, \graysq \graydi \top$.
The G\"odel Fixed Point Lemma plays a role in our proof of this fact, but it is still not
a \emph{direct} application. We use the Fixed Point Lemma in a Rosser-style argument in
the proof of Theorem~\ref{zustersmurf}. Can we prove it more directly?
 We do not know whether our provability-like operator can be
represented in $A$ by a formula. E.g., ${\sf S}^1_2$ and the theory of the ordering of the natural numbers
would provide an example. Surprisingly, we can employ the usual argument in case $U$ is \emph{effectively}
Friedman-reflexive. We give the argument below. Even if we, thus, prove a result that is weaker than
what we already know, we think the alternative proof is of independent interest. E.g., it could have 
other generalisations. We first prove a Fixed Point Lemma.

\begin{theorem}\label{fipole}
Suppose $U$ is sequential and effectively Friedman-reflexive. Let $N:U \rhd {\sf S}^1_2$.
We define $\mathfrak I$ w.r.t. $N$.
Suppose $A(x)$ is a boolean combination of $\Sigma^0_1$-formulas with just $x$ free. Then, there is a $B$ in the $U$-language, such that
 $U^{\sf e} \vdash B \iff A^{\mathfrak I}(\gn B)$.
\end{theorem}

\begin{proof}
Suppose $U$ is sequential and effectively Friedman-reflexive. Let $N$ and $\mathfrak I$ be as in the
statement of the Theorem.

By effectivity, we can find a recursive $F$ that sends any $\Sigma^0_1$-sentences $S$ to 
$S^I$, where $S^I$ is equivalent over $U^{\sf e}$ to $S^{\mathfrak I}$.
We can lift this function to Boolean combinations of $\Sigma^0_1$-sentences. Let's say
the result is $G$. 

Suppose $A(x)$ is a boolean combination of $\Sigma^0_1$-formulas with just $x$ free. 
We write $A(G(x))$ for the result of replacing each $\Sigma^0_1$-component $Sx$ of the
Boolean combination by $\exists y,z,u\, (G_0xyz \wedge S_0yu)$, where $Gxyz$ is a $\Delta^0_1$-formula
such that  $\exists z\,Gxyz$ represents the graph of $G$ and $S_0yu$ is a $\Delta_0$-formula such that
$Sx$ is (equivalent to) $\exists u \, S_0xu$.

We can find a $C$ such that ${\sf EA} \vdash C \iff A(G(\gn C))$, by the G\"odel Fixed Point Lemma.
We note that the Fixed Point Lemma yields a sentence of the form $A(G(t))$, where $t$ is a substitution term.
Since, this term is not really in the language, we have to eliminate it. We do this in the same we as we did for
the function $G$, so that $C$ is again a boolean combination of $\Sigma^0_1$-sentences. 
Let $B := G(C)$. Then, \qedright
\begin{eqnarray*}
U^{\sf e} & \vdash & (C\iff  A(G(\gn C)))^{\mathfrak I} \\
& \vdash & C^{\mathfrak I} \iff (A(G(\gn C)))^{\mathfrak I} \\
& \vdash & B \iff (A(\gn B))^\mathfrak I
\end{eqnarray*}
\end{proof}

\begin{theorem}[Alternative Proof for L\"ob's Principle in the effective Case]
Suppose $U$ is sequential and effectively Fried\-man-reflexive. Let $K:A\rhd U$ be an FM-interpretation.
Then, $\Lambda^{\sf fr}_K$ proves L\"ob's Principle.
\end{theorem}

\begin{proof}
Suppose $U$ is sequential and effectively Friedman-reflexive. Let $K:A\rhd U$ be an FM-interpretation.
Consider any $B$ in the $A$-language.
By Theorem~\ref{fipole}, we can find a $D$ such that
$U^{\sf e} \vdash D \iff \apr^{\mathfrak I}_A (D^K \to B)$. Thus,
$U \vdash D \iff \ppd A {(D^K \to B)}$. Setting $E := D^K$, we find
$A \vdash E \iff \graysq_A(E \to B)$. So, we have one variant of the L\"ob 
Fixed Point in $A$. Since we have {\sf K}4, L\"ob's Principle follows.
\end{proof}

\subsubsection{Solovay's Theorem}
We can prove Solovay's Theorem in case the base theory $U$ is both sequential and effectively
Friedman-reflexive. We first formulate the theorem.

Let $\alpha$ range over $0,1,\dots, \infty$. We define $\graysq^{\, 0} \bot := \bot$,  $\graysq^{\,k+1} \bot := \graysq \graysq^{\, k}\bot$, and
$\graysq^{\,\infty} \bot := \top$ and, similarly, for $\opr$. Suppose $K:A \rhd U$ is an FM-interpretation. Let ${\sf d}(K)$ be the smallest $\alpha$ such that
$A \vdash \graysq_{K,A}^{\,\alpha} \bot$. 

\begin{theorem}[{\small Solovay's Theorem for sequential effectively Friedman-reflexive bases}]\label{solosmurf}
Suppose that $K:A \rhd U$ is an FM-interpretation and that $U$ is effectively Friedman-reflexive and sequential. Then,
$\Lambda_K^{\sf fr} = {\sf GL} + \opr^{\,{\sf d}(K)} \bot$.
\end{theorem}

We will prove Solovay's Theorem by verifying the conditions for it given in \cite{dejo:proo91}. 
See also\cite{viss:arit15}.

The idea of  de Jongh, Jumelet and Montagna is that Solovay's embedding result can be verified in an extension of the
 modal logic ${\sf R}^{-}$ enriched with certain fixed points.
We  introduce the logic ${\sf R}^{-}$ of Guaspari and Solovay. See \cite{guas:ross79}.
The language of ${\sf R}^{-}$ is given by:
\begin{itemize}
\item
$\alpha :: = p_0 \mid p_1 \mid \ldots$
\item
$\phi ::= \alpha \mid \bot \mid \top \mid \neg\,\phi \mid \opr \phi \mid (\phi\wedge \phi) \mid 
(\phi \vee \phi) \mid (\phi \to \phi) \mid \opr\phi < \opr \phi \mid \opr \phi \leq \opr \phi$
\end{itemize}

\noindent
The logic ${\sf R}^{-}$ is axiomatised by the axioms and rules of {\sf GL} (for the extended language) plus the
following axioms. 

\begin{enumerate}[${\sf R}^{-}$1.]
\item
$\vdash \opr\phi \leq \opr \psi \to \opr \phi$
\item
$\vdash (\opr \phi \leq \opr \psi \wedge\opr \psi \leq \opr \chi) \to \opr \phi \leq \opr \chi$
\item
$\vdash \opr \phi < \opr \psi \iff  (\opr \phi \leq \opr \psi \wedge \neg\, \opr \psi \leq \opr \phi)$
\item
$\vdash \opr\phi \to (\opr\phi \leq \opr\psi \vee  \opr\psi \leq \opr\phi)$
\item
$\vdash \opr \phi \leq \opr \psi \to \opr(\opr \phi \leq \opr \psi )$
\item
$\vdash \opr \phi < \opr \psi \to \opr(\opr \phi < \opr \psi )$
\end{enumerate}

Now consider a finite Kripke model $\mathcal K$ of {\sf GL}
with nodes $0,\dots, n-1$. Here 0 is the bottom node. 
Let $\prec$ be its accessibility relation. We want to `embed' this model in our modal logic.
To realise this purpose, we add constants $\li_i$, for $i<n$, to the language of  ${\sf R}^{-}$ and we extend the schemes
to the extended language. We demand that
the constants satisfy the following equations. We write $j \parallel i$ for $j$ is incompatible with $i$ w.r.t. $\prec$.
\begin{enumerate}[{\sf fp}1.]
\item
$\vdash \; \li_i \iff  (\opr\neg\,\li_i \wedge \bigwedge_{j\succ i} \oco \li_j \wedge \bigwedge_{j \parallel i}\;\; 
\bigvee_{k\preceq i, \; k\parallel j}\opr \neg \,  \li_k < \opr \neg\,  \li_j)$.
\item
For $i\neq j$, we have $\vdash \;  \opr \neg\, \li_i \leq \opr \neg\,\li_j \; \to \; \opr \neg \, \li_i < \opr \neg \, \li_j$. 
\end{enumerate}

We proceed to introduce our intended interpretation of $\leq$ and $<$.
We remind the reader of the witness comparison notation.
We define, for any $C=\exists x\, C_0(x)$\/ and $D=\exists y\, D_0(y)$:
\begin{itemize}
\item
$C\leq D := \exists x\,(C_0(x) \wedge \forall y<x\,\neg D_0(y))$,
\item
$C < D := \exists x\,(C_0(x) \wedge \forall y\leq x\,\neg D_0(y))$,
\end{itemize}

Suppose $K:A \rhd U$ is an FM-interpretation, where $U$ is sequential. As usual we work with 
a fixed $N:U \rhd {\sf S}^1_2$.
\begin{itemize}
\item
We define
 $\ppd AB < \ppd AC$ by $(\apr_AB < \apr_AC)^{I}$, where
$I$ is a cut such that
\[U^{\sf e} \vdash  (\apr_AB < \apr_AC)^{I} \iff (\apr_AB < \apr_AC)^{\mathfrak I}.\]
\item
We define $\graysq_{K,A} B < \graysq_{K,A}C$ by $(\ppd AB < \ppd AC)^{K}$.
\item
We define  $\ppd AB \leq \ppd AC$ and  $\graysq_{K,A} B \leq \graysq_{K,A}C$ similarly.
\end{itemize}

\noindent
We extend the notion of translation of the modal language to the richer vocabulary in the obvious way.

\begin{theorem}
Suppose $K:A \rhd U$ is an FM-interpretation, where $U$ is sequential. Let $N:U \rhd {\sf S}^1_2$.
Then, we have ${\sf R}^-$ for all $K,A$-translations of the modal language.
\end{theorem}

\begin{proof}
Ad ${\sf R}^-$1. We have ${\sf EA} \vdash \apr_A B \leq \apr_A C \to \apr_A B$. It follows that
\[\desca^{\sf e} \vdash  ( \apr_A B \leq \apr_A C \to \apr_A B)^{\mathfrak I}.\] Hence,
$U \vdash   \ppd AB \leq \ppd AC \to \ppd AB$. We may conclude that \[A \vdash \graysq_{K,A} B \leq \graysq_{K,A}C \to \graysq_{K,A}B.\]

The proofs of ${\sf R}^-$2,  ${\sf R}^-$3, and  ${\sf R}^-$4 are similar.

We treat  ${\sf R}^-$5. Let $I$ be a cut such that \[U^{\sf e} \vdash  (\apr_AB < \apr_AC)^{I} \iff (\apr_AB < \apr_AC)^{\mathfrak I}.\]
We have:
${\sf EA} \vdash \apr_AB \leq \apr_AC \to \apr_A( \apr_AB \leq \apr_AC)^{IK}$. So,
\[U^{\sf e} \vdash  (\apr_AB \leq \apr_AC \to \apr_A( \apr_AB \leq \apr_AC)^{IK})^{\mathfrak I}.\]
And, thus,
\[U \vdash  \ppd AB \leq \ppd AC \to \ppd A{( \graysq_{K,A}B \leq \graysq_{K,A}C)}.\]
We may conclude that $A \vdash  \graysq_{K,A}B \leq \graysq_{K,A}C \to \graysq_{K,A}{( \graysq_{K,A}B \leq \graysq_{K,A}C)}$.

The proof of ${\sf R}^-$6 is similar to that of ${\sf R}^-$5.
\end{proof}

To provide the $\ell_i$ we need an extension of Theorem~\ref{fipole}.

\begin{theorem}\label{mufipole}
Suppose $U$ is sequential and effectively Friedman-reflexive. Let $N:U \rhd {\sf S}^1_2$.
We define $\mathfrak I$ w.r.t. $N$.
Suppose we have formulas $A_i(x_0,\dots, x_{k-1})$, for $i<k$, where each $A_i$ is a boolean combination of $\Sigma^0_1$-formulas with just 
$x_0,\dots,x_{k-1}$ free. Then, for  $i<k$, there are $B_i$ in the $U$-language, such that
 \[U^{\sf e} \vdash B_i \iff A_i^{\mathfrak I}(\gn {B_0},\dots,\gn{B_{k-1}}),\] for each $i<k$.
\end{theorem}

The proof of the Theorem is the similar to the proof of Theorem~\ref{fipole} using the Multiple Fixed Point Lemma
as it is available in {\sf EA}.

In the proof of Theorem~\ref{exilim}, we need the assumptions that we work with a single conclusion system and that
the fixed point construction is a usual construction.

\begin{theorem}\label{exilim}
Suppose $K:A \rhd U$ is an FM-interpretation, where $U$ is sequential. Suppose $U$ is effectively Friedman-reflexive. Let $N:U \rhd {\sf S}^1_2$.
Let $\mathcal K$ be a Kripke model for {\sf GL} with nodes $0,\dots,n-1$ and accessibility relation $\prec$, where
$0$ is the bottom node.
Then, we can find sentences $L_i$, for $i<n$, such that:
\begin{enumerate}[a.]
\item
$A\vdash \; L_i \iff  (\graysq_{K,A}\neg\,L_i \wedge \bigwedge_{j\succ i} \graydi_{K,A} L_j \wedge \bigwedge_{j \parallel i}\;\; 
\bigvee_{k\preceq i, \; k\parallel j}\graysq_{K,A} \neg \,  L_k < \graysq \neg\,  L_j)$.
\item
For $i\neq j$, we have $A\vdash \;  \graysq_{K,A} \neg\, L_i \leq \graysq_{K,A} \neg\,L_j \; \to \; \graysq_{K,A} \neg \, L_i < \graysq_{K,A} \neg \, L_j$. 
\end{enumerate}
\end{theorem}

\begin{proof}
Using Theorem~\ref{mufipole}, we can find sentences $\lambda_i$ such that: 
\[
U^{\sf e}\vdash \; \lambda_i \iff  (\underline i = \underline i \wedge
\apr_A{\neg\,\lambda^K_i} \wedge \bigwedge_{j\succ i} \aco_A {\lambda^K_j} \wedge \bigwedge_{j \parallel i}\;\; 
\bigvee_{k\preceq i, \; k\parallel j}\apr_A {\neg \,  \lambda^K_k} < \apr_A{\neg\, \lambda^K_j})^{\mathfrak I}.
\]
Hence,
\[
U\vdash \; \lambda_i \iff  (\ppd A{\neg\,\lambda^K_i} \wedge \bigwedge_{j\succ i} \ccd A {\lambda^K_j} \wedge \bigwedge_{j \parallel i}\;\; 
\bigvee_{k\preceq i, \; k\parallel j}\ppd A {\neg \,  \lambda^K_k} < \ppd A{\neg\, \lambda^K_j}).
\]
Taking $L_i := \lambda_i^K$, we find (a).

Claim (b) follows from the fact that, for $i\neq j$:
\[{\sf EA}\vdash \;  \apr_{A} \neg\, L_i \leq \apr_{A} \neg\,L_j \; \to \; \apr_{A} \neg \, L_i < \apr_{A} \neg \, L_j.\] 
We note that this last fact uses that our proof system is single conclusion and that the $L_i$ are pairwise distinct.
This last insight follows from the fact that the $\lambda_i$ are pairwise distinct. This is because
the first conjunct of $\Lambda_i$ has the form $(\underline i = \underline i)^I$, for some $N$-cut $I$.
\end{proof}

Solovay's Theorem now follows from the results of \cite{dejo:proo91}. 
We have the following corollary.

\begin{corollary}
Suppose $K:A \rhd U$ is a faithful FM-interpretation and $A$ is consistent and $U$ is effectively Friedman-reflexive, sequential, and polyglottic. Then,
$\Lambda_K^{\sf fr} = {\sf GL}$.
\end{corollary}

\begin{proof}
We assume the conditions of the corollary. 
Clearly $A +\graydi_{K,A}^0\top$ is consistent.
Suppose $A+ \graydi_{K,A}^n\top$ is consistent.
So, by polyglotticity, $U+ \ccd A {\graydi_{K,A}^n\top}$ is consistent. 
But, then, by faithfulness, $A +\graydi_{K,A}^{n+1}\top$ is consistent. 
It follows that ${\sf d}(K) = \infty$.
\end{proof}

\begin{corollary}
Suppose $\tupel{A,U}$ is an FM-frame, $A$ is consistent and sequential, and $U$ is RE, effectively Friedman-reflexive, sequential, and polyglottic. Then,
$\Lambda_{A,U}^{\sf fr} = {\sf GL}$.
\end{corollary}

\begin{proof}
By a result of Friedman, if there is an interpretation of an RE theory $U$ in a finitely axiomatised $A$, then there is also a faithful one.
See \cite{smor:nons85} or \cite{viss:faith05}.
\end{proof}

\subsection{Relation between Logics}
We consider the functor ${\sf Seq}$ of Remark~\ref{cattwee}. 
Let $\jmath_V$ be the interpretation based on relativisation to $\dom$.

\begin{theorem}\label{freubelsmurf}
Suppose $U$ is sequential and Friedman-reflexive and $K:A \rhd U$.
Then, $\Lambda^{\sf fr}_{\jmath_A \circ K } \subseteq \Lambda^{\sf fr}_K$.
\end{theorem}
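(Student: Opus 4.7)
The plan is to invoke Theorem~\ref{machtigesmurf} with $F := {\sf Seq}$ and $\eta_V := \jmath_V$; once the hypotheses of that theorem are checked the conclusion $\Lambda^{\sf fr}_{\jmath_A\circ K}\subseteq \Lambda^{\sf fr}_K$ drops out immediately. So the work is entirely in verifying that ${\sf Seq}$ fits the abstract framework of Theorem~\ref{machtigesmurf}.

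First I would verify the three bullet conditions on $F$ and $\eta$. The functor ${\sf Seq}$ preserves finite axiomatisability, because it only adjoins the finitely many axioms of ${\sf AS}$ plus the axiom declaring members of $\apr$ to be empty sets, while relativising the axioms of $V$ to $\apr$; in particular ${\sf Seq}(A)$ is finitely axiomatisable, which is needed for $\Lambda^{\sf fr}_{\jmath_A\circ K}$ to be defined. The interpretation $\jmath_V\colon V\to{\sf Seq}(V)$ based on relativisation to $\apr$ is faithful: the forward direction holds by construction, and the reverse direction is the standard conservativity of a sequential extension, which one checks by expanding a model of $V$ to one of ${\sf Seq}(V)$ by attaching hereditarily finite set-theoretic material outside $\apr$. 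The identity ${\sf Seq}(V+\Gamma)={\sf Seq}(V)+\Gamma^{\jmath_V}$ is transparent from the definition, since ${\sf Seq}$ acts axiom-by-axiom via relativisation and $\Gamma^{\jmath_V}$ is exactly the $\apr$-relativisation of $\Gamma$.

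Next I would supply the interpretations $M_V\colon {\sf Seq}(V)\to V$ for every extension $V$ of $U$ in the same language. Since $U$ is sequential and sequentiality is preserved when one adds axioms in the same signature, each such $V$ is sequential, and then the remark cited in the paper (and expanded in Appendix~\ref{notarissmurf}) gives ${\sf Seq}(V)\mutint V$; in particular $V\rhd{\sf Seq}(V)$, which is what Theorem~\ref{machtigesmurf} requires.

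With all hypotheses confirmed, Theorem~\ref{machtigesmurf} yields $\Lambda^{\sf fr}_{\jmath_A\circ K}\subseteq\Lambda^{\sf fr}_K$, completing the proof. The only genuine obstacle is the faithfulness of $\jmath_V$ together with the axiom-preservation identity for ${\sf Seq}$; both are folklore for sequential extensions, but they depend on the precise set-up of ${\sf Seq}$ as a functor on $\mathbb D$ and are presumably handled once and for all in the appendix.
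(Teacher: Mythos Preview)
Your proposal is correct and matches the paper's own proof exactly: the paper's argument is the single line ``direct application of Theorem~\ref{machtigesmurf} with $\jmath_A$ in the role of $\eta_A$'', and you have simply spelled out the verification of the hypotheses (finite axiomatisability, faithfulness of $\jmath_V$, the identity ${\sf Seq}(V+\Gamma)={\sf Seq}(V)+\Gamma^{\jmath_V}$, and the existence of $M_V$ from sequentiality of extensions of $U$) that the paper leaves to Appendix~\ref{notarissmurf} and Remark~\ref{cattwee}.
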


\begin{proof}
The theorem is a direct application of Theorem~\ref{machtigesmurf} with
$\jmath_A$ in the role of $\eta_A$.
\end{proof}

\section{Concluding Remarks}\label{corema}

In this section, we look briefly backward at what is and what is not achieved and
we look forward at a possible next step in the program.

\subsection{Coordinate-free?}
Did we succeed in giving a treatment of the Second Incompleteness Theorem and of
provability logic that is indeed coordinate-free? 

The results of our general framework as, for example, provided in Sections~\ref{basics},
\ref{intlog} and \ref{compunde} clearly do not involve arithmetisation, neither in their statement nor
in their proof (with the exception of the proof of Theorem~\ref{polyq}). 
In the remaining sections we have general results that do not depend on
arithmetisation in their statement but that do ask for a proof involving arithmetisation.

Sometimes the concrete statement of an application does involve arithmetisation.
This is not unlike the situation for the Cartesian product in Category Theory.
The general treatment of the product is clearly implementation-free, but if we want
to apply it, e.g., to the hereditarily finite sets, we have to define the category from
the sets and for this we need to code pairing \dots 

A first point of discussion is that the usual finite axiomatisations of many salient finitely axiomatisable theories
employ a truth predicate. Examples are ${\sf S}^1_2$, {\sf EA}, and $\mathrm I\Sigma_1$.\footnote{Similarly,
the finite axiomatisation of predicative comprehension over a pair theory does involve various implementation
details.}
So, statements involving these theories would not be coordinate-free to begin with.
Now, of course, we could also axiomatise these with the first so-and-so-many instances of
their coordinate-free schematic axiomatisations. That would be a bit unnatural perhaps, but it
would be at least a coordinate-free specification. Maybe a better way of looking at the matter
is as follows. All finite axiomatisations of a theory
are provably equivalent. Since, our framework works modulo provable equivalence,
we are not dependent on a specific finite axiomatisation. Moreover, the class of all finite axiomatisations is
uniquely determined by the infinite axiomatisation which in the cases above is coordinate-free.
 
Here are some further examples as food for thought.

\begin{itemize}
\item
 Peano Corto is interpretable in ${\sf S}^1_2+\oco_{{\sf S}^1_2}\top$. However,
  neither ${\sf S}^1_2+\oco_{{\sf S}^1_2}\top$ itself nor the Henkin interpretation that
  we employ is coordinate-free.
  \item
  Peano Corto is interpretable in {\sf EA}, since {\sf EA} interprets ${\sf S}^1_2+\oco_{{\sf S}^1_2}\top$
  on a super-logarithmic cut. Here {\sf EA} is coordinate-free (ignoring to the worry articulated above) 
  but the only interpretation that we know of is not. 
  \item
  By Vaught's Theorem (see \cite{vaug:axio67,viss:vaug12}), there is a finite axiom scheme that axiomatises
  Peano Corto. We replace the schematic variables in the scheme by class variables and take the universal
  closure. Also we add Predicative Comprehension. This results in a finitely axiomatised theory, say $W$,
  that conservatively
  extends Peano Corto in an extended language. One can show that $W$ is mutually interpretable
  with {\sf EA}. The interpretation of {\sf EA} in $W$ can be taken to be coordinate-free: we have {\sf EA} on the intersection of
  all cuts that are classes. The interpretation of Peano Corto in $W$ is also coordinate-free, however the specification of
  $W$ is not, since the Vaught construction uses truth-predicates.
  \end{itemize}
  
We note that, if we only know arithmetisation-involving interpretations, but both theories are coordinate-free, then
the frame properties are unproblematic.
For example, our version of G2 for {\sf EA} and Peano Corto is ${\sf EA} \nrhd (\paco+ \ccu{\sf EA})$, which is
perfectly fine, and, similarly, for the fact that the frame-logic of $\tupel{{\sf EA},\paco}$ is {\sf GL},

\begin{remark}
We consider the following theory {\sf IIA} (Initial Isomorphism Arithmetic).
We start with ${\sf S}^1_2$. We expand the language with second-order variables and we expand the theory
with predicative comprehension obtaining ${\sf PC}({\sf S}^1_2)$. Now we expand the language with a 
new unary predicate {\sf J} and a binary predicate {\sf F}. We add an axiom 
$\forall I\, ({\sf cut}(I) \to {\sf J} \subseteq I)$ and an axiom stating that {\sf F} is an isomorphism
between {\sf ID} and {\sf J}.

By Theorem~5.9 of \cite{viss:pean14}, it follows that {\sf IIA} and Peano Basso have the same arithmetical consequences.
So, the pair {\sf IIA} and Peano Basso form a nice coordinate free pair.\footnote{We could even take ${\sf PA}^-$
as starting point rather than ${\sf S}^1_2$, so removing all doubts regarding coordinate freedom. In fact we need only add closure
under $\omega_1$ to the ${\sf PA}^-$-based version of {\sf IIA} to obtain precisely {\sf IIA}. Moreover, we could
add $\omega_1$ using a new symbol and adding appropriate recursion axioms, thus even avoiding sequence coding and the like.}
 
 Since ${\sf PC}({\sf S}^1_2)$ is mutually interpretable with ${\sf S}^1_2+\oco_{{\sf S}^1_2}\top$ and 
 ${\sf S}^1_2+\oco_{{\sf S}^1_2}\top$ is mutually interpretable with {\sf EA}, we find that
 {\sf IIA} interprets {\sf EA}. 
\end{remark}  
  
  \begin{question}\label{q4}
  \begin{enumerate}[i.]
  \item
  Can we find a more canonical axiom scheme for Peano Corto in a coordinate-free way? We note that the cuts are already schematic.
  The whole problem is in replacing the schematic variable that ranges over \emph{$\Sigma^0_1$-sentences} by an unrestricted
 one that ranges over arbitrary formulas.
  \item
  Is there a coordinate-free specification of an interpretation of Peano Corto in {\sf EA} or in some $\mathrm I\Sigma_n$?
  \item
  Do we have ${\sf EA} \rhd {\sf IIA}$?
  \end{enumerate}
  \end{question}

\subsection{New Insights}
Finding coordinate-free representations is a worthy aim, but it should not stand alone.
We also want new insights. 
The present paper does indeed produce some new insights.

For example, the usual form of G2, for the case of {\sf EA}, is  (a) ${\sf EA} \nrhd ({\sf S}^1_2+\oco_{\sf EA} \top)$.
However, we do have   ${\sf EA} \rhd ({\sf S}^1_2+\aco_{\sf EA} \top)$. Our version of G2 with base
$\paco$ is (b) ${\sf EA} \nrhd (\paco+ \ccd{\sf EA}\top) = (\paco+\aco_{\sf EA} \top)$.
We see that in (a) the base theory is weaker and the consistency statement stronger.
In (b) is is the other way around. Both (a) and (b) follow from a version of G2 due to Pudl\'ak: (c)
${\sf EA} \nrhd \mho ({\sf EA}) := ({\sf S}^1_2 + \verz{ \oco_{n,{\sf EA}}\top \mid n\in \omega})$.
We suspect that the version of (b) with $\paco$ replaced by \desca\ does not directly follow from (c).
However, this depends on a  negative answer to Question~\ref{q5}(i).

The most convincing example of a new phenomenon is Solovay's Theorem for interpreter logics
over an effectively Friedman-reflexive base.

\subsection{Perspectives}
We think the obvious next step in the project should be the study of Friedman-reflexivity for pair theories.
There is hope for progress, since Fedor Pakhomov suggested a very natural construction of an effectively Friedman-reflexive pair theory.
This theory is interpretable in ${\sf S}^1_2$.

Of course, the present paper also left a list of open questions. We collected them in  Appendix~\ref{vraagsmurf}.

\appendix

\section{Notations, Notions and Imported Results}\label{notarissmurf}
Theories will be theories in predicate logic of finite signature. We allow theories with sets of axioms of arbitrary complexity.
The variables $T,U,V,\dots$ will range over theories. The variables $A,B,C,\dots$ will ambiguously
range over sentences and finitely axiomatised theories.

Interpretations will be multi-dimensional piece-wise interpretations. If we assume that a theory proves
that there are at least two objects the piece-wiseness can be eliminated. A property we use in the paper
is that predicate logic can interpret the theory of any finite model.
The reader is referred to our
paper \cite{viss:smal19} for a quick introduction to the details. The idea of a piece-wise interpretation
is explained in \cite{viss:onq17} and in \cite{viss:whyR14}.

We will use depth-of-quantifier-alternations as our measure $\rho$ of complexity of formulas.
The notion is worked out in detail in \cite{viss:smal19}.

Here are our main notions and notations.
\begin{itemize}
\item
$U \subseteq V$ means that the theory $V$ considered as a set of theorems extends the theory $U$ considered
as a set of theorems, where $V$ has the same language as $U$.
\item
We write $\opr_\alpha B$ for the formalised statement that $B$ is provable from the theory with axiom-set
represented by $\alpha$. In case $A$ is a finitely axiomatised theory, we write $\opr_A B$ for
$\opr_{\alpha_0} B$, where $\alpha_0 = \bigvee_{i<n} x= \gn{A_i}$, where $A_0,\dots, A_{n-1}$ are the
axioms of $A$. We write $\oco$ for $\neg\opr\neg$.
\item
We write $\opr_{n,A} B$ for the formalised statement that $B$ is provable from $A$ using only statements
of $\rho$-complexity $\leq n$, where $\rho$ measures the depth of quantifier alternations. 
We will usually implicitly assume that $n \geq \rho(A)$.
\item
We write $\apr_AB$ for $\opr_{\rho(A \to B),A}B$.
\item
We write $K:U \rhd V$ or $K:V \lhd U$ for $K$ is an interpretation of $V$ in $U$. We write $U \rhd V$ for $\exists K\, K:U\rhd V$, etcetera,
We write $U \mutint V$ for $U$ and $V$ are mutually interpretable. In other words,  $U \mutint V$ iff $U \rhd V$ and 
$U \lhd V$.\footnote{We used to employ $\equiv$ for mutual interpretability. However, in the
writings of Lev Beklemishev and his school, $\equiv$ is used for extensional sameness of theories.}
In the context of a category of interpretations, we will use $V \stackarrow{K}U$, for $K:V \lhd U$.
\item
 $\mathbb E :={\sf INT}^+_3$ is the category of theories and interpretations, where  
 two interpretations $K,K': V \to U$ are the same iff, for all
$V$-sentences $A$, we have $U \vdash A^K \iff A^{K'}$. 
\item
${\sf E}_{VU}$ is the interpretation based on the identical translation that witnesses $V \subseteq U$.
\item
$U$ and $V$ are \emph{sententially congruent} or \emph{elementary congruent} iff they are isomorphic in $\mathbb E$.
\item
A theory $U$ is \emph{restricted} if, for some $m$ all its axioms are of $\rho$-complexity $\leq m$.
\item
Let an interpretation $N:{\sf S}^1_2 \lhd U$ be given.
A definable $N$-cut in a number theory $U$ is given by a formula $I$ such that $U$ proves that $I$ is a subclass of $N$ and is closed under
$0_N$, ${\sf S}_N$, $+_N$, $\times$ and $\omega_{1N}$ and downwards closed w.r.t. $\leq_N$. The formula defining $I$ need not be
of the form $J^N$. If $I$ is definable inside $N$, the cut is $N$-internal. In case $N$ is a multi-dimensional interpretation, the cut
$I$ is also multi-dimensional. Similarly, if $N$ is piecewise, then $I$ need not be given by a single formula but by a number of pieces.
Since, we need our notion mostly in the context of sequential theories these fine points can be safely ignored.  
\end{itemize}

We turn to the statement of some central external results that we employ in the paper.
Let {\sf true} be a standard $\Sigma^0_1$-truth predicate.
The following theorem is a direct consequence of the estimate of the transformation of witnesses,
when we move from $S$ to ${\sf true}(S)$, for $\Sigma^0_1$-sentences $S$.
See \cite[V.5(b)]{haje:meta91} for details. We can give a similar estimate for the case
of provability.
\begin{theorem}\label{sicosmurf}
Let  $J$ be a logarithmic cut in ${\sf S}^1_2$. Then,
\begin{enumerate}[a.]
\item
 ${\sf S}^1_2 \vdash S^J \to {\sf true}(\gn S)$.
\item
${\sf S}^1_2 \vdash S^J \to \opr_{m,{\sf S}^1_2} S$, for sufficiently large $m$.\\
\sbra{The number $m$ will be ${\sf max}(\rho({\sf S}^1_2), \rho(S))$ plus some constant for the overhead.}
\end{enumerate}
\end{theorem}

The following theorem is a watered-down and inessentially modified version of Theorem~6.2. of \cite{viss:smal19}.
This theorem  is an extension of earlier results independently obtained by Harvey Friedman (see \cite{smor:nons85})
and Jan \kraj\ (see \cite{kraj:note87}). See also \cite{viss:unpr93,viss:faith05}.

\begin{theorem}\label{hulpsmurf}
Suppose $U$ is a consistent restricted sequential RE theory with bound $m$. Let $K$ be a witness of sequentiality for
$U$. Let $N:{\sf S}^1_2 \lhd U$.
 Then, from the data $m$, $K$, and $N$, we can effectively find an $N$-cut $I$ such that, for all $\Sigma^0_1$-sentences $S$, 
 if $U \vdash S^I$, then $S$ is true.
\end{theorem}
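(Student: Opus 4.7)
The plan is to follow the classical Friedman--Kraj\'\i\v{c}ek--Pudl\'ak route to $\Sigma^0_1$-soundness on a cut for restricted sequential theories, combining partial truth predicates with an internal bounded-complexity consistency statement.

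First, using the sequentiality witness $K$, I would construct, for each natural $n$, a partial truth predicate ${\sf tr}_n(x)$ defined on a $U$-provably definable $N$-cut $J_n$, such that the standard Tarski clauses hold inside $U$ for $\Sigma^0_1$-formulas of $\rho$-complexity at most $n$ lying in $J_n$; in particular, for each standard $\Sigma^0_1$-sentence $S$ of sufficiently small complexity, $U\vdash{\sf tr}_n(\gn S)\iff S^{J_n}$. This is routine sequential machinery. Next, because $U$ is restricted with bound $m$, any $U$-proof of any sentence of the form $S^I$---with $S$ a $\Sigma^0_1$-sentence and $I$ an $N$-cut of some bounded description complexity $c$---uses only formulas of $\rho$-complexity at most some $m'=m'(m,c,\rho(N))$, which can therefore be fixed in advance of the construction of $I$.

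The core step is a Pudl\'ak-style cut-shortening that produces a $U$-provably definable $N$-cut $H$ on which $U$ proves its own bounded consistency, $U\vdash\oco^H_{m',U}\top$. Heuristically one contracts $H$ along a $\Delta_0$ search for a short $m'$-inconsistency proof, using external consistency of the restricted theory $U$ to argue that no such proof exists, while keeping $H$ short enough not to contradict G2. I would then take $I$ to be a further Solovay-style shortening of $J_{m'}\cap H$, small enough that ${\sf tr}_{m'}$ respects $\Sigma^0_1$-witness extraction. If $U\vdash S^I$ externally for some false $\Sigma^0_1$-sentence $S=\exists x\,\sigma(x)$, the existence of an external proof yields $\opr^H_{m',U}\gn{S^I}$ internally; combined with $\oco^H_{m',U}\top$ and the Tarski clauses for ${\sf tr}_{m'}$, this would force $U\vdash{\sf tr}_{m'}(\gn S)$, which, by $\Sigma^0_1$-completeness on the cut and the fact that $U$ externally refutes every standard instance $\sigma(\bar n)$, contradicts the consistency of $U$.

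The main obstacle will be the Pudl\'ak step: effectively exhibiting an $N$-cut on which $U$ proves its own $m'$-bounded consistency, from just the data $m,K,N$. This has to be carried out with an eye on G2: the cut must be short enough that no proof of full inconsistency can be coded inside it, yet explicitly definable by a formula built uniformly from $K$ and $N$. Once this step is in place, the remaining work---the partial truth predicates, the Solovay shortenings, and the complexity bookkeeping---is familiar in the sequential setting, and effectivity drops out because every ingredient is uniformly exhibited from the input data.
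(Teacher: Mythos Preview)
The paper does not prove this theorem: it appears in the appendix as an imported result, described as a watered-down version of Theorem~6.2 of \cite{viss:smal19}, with origins in Friedman and Kraj\'{\i}\v{c}ek. So there is no in-paper argument to compare against; your sketch has to be measured against the literature proof.

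Your ingredients are the right ones, but the final step does not close. From an external proof of $S^I$ you can indeed get $U\vdash\opr^H_{m',U}\gn{S^I}$, and with reflection (not merely $\oco^H_{m',U}\top$---consistency alone does not let you pass from provability to truth) you obtain $U\vdash{\sf tr}_{m'}(\gn{S^I})$, which by the Tarski clauses is just $U\vdash S^I$ again: your hypothesis. Your proposed contradiction, that $U$ externally refutes each standard instance $\sigma(\bar n)$, only exhibits $\omega$-inconsistency of $U$ on the cut, not inconsistency. In any model $M\models U$ the witness lying in $I^M$ is simply nonstandard, and nothing in your argument excludes that. Your description of the Pudl\'ak step is also off: the cut on which $U$ proves its own restricted consistency is produced \emph{internally} by the truth predicate---the model sees, via ${\sf tr}_m$, that it satisfies all axioms of $U$, so proofs from those axioms preserve ${\sf tr}_m$-truth on a suitable cut---and not by ``contracting along a $\Delta_0$ search, using external consistency''.

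What is missing is the model-theoretic core of the Friedman--Kraj\'{\i}\v{c}ek argument. One does not derive a contradiction inside $U$. Instead one exhibits a single model $M'\models U$ in which the cut $I$ (definable uniformly from $m,K,N$ via the partial truth predicate) is $\Sigma^0_1$-correct; in Friedman's construction one in fact arranges that $I^{M'}$ coincides with the standard integers. Once such an $M'$ is in hand the conclusion is immediate: if $U\vdash S^I$ then $M'\models S^I$, so $S$ has a standard witness and is true. External consistency of $U$ is used exactly once, to furnish a model from which to start the construction.
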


The following theorem is Theorem~4.15 of \cite{viss:faith05}

\begin{theorem}\label{zustersmurf}
Suppose $A$ is a consistent, finitely axiomatised sequential theory. Let $U$ be RE. Suppose $A \mutint U$. Then, there is
an $N: {\sf S}^1_2 \lhd U$, such that $N$ is $\Sigma^0_1$-sound. In other words, $U$ tolerates ${\sf S}^1_2$ plus all true
$\Pi^0_1$-sentences. 
\end{theorem}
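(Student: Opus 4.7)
The plan is to construct the desired $\Sigma^0_1$-sound interpretation $N : U \rhd {\sf S}^1_2$ by composing a $\Sigma^0_1$-sound interpretation of ${\sf S}^1_2$ in $A$ with $K : U \rhd A$, and to control the cuts carefully so that testing soundness by pulling back through $J : A \rhd U$ lands us inside a $\Sigma^0_1$-sound region of $A$---despite the intervention of the self-interpretation $J \circ K : A \rhd A$.

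First I would apply Theorem~\ref{hulpsmurf} to $A$ itself. Since $A$ is finitely axiomatised (hence restricted), consistent, sequential, and RE, the theorem yields a sequential interpretation $M : A \rhd {\sf S}^1_2$ together with an $M$-cut $I_0$ such that $A$ is $\Sigma^0_1$-sound via $M^\circ := M \circ I_0$: whenever $A \vdash S^{M^\circ}$ for a $\Sigma^0_1$-sentence $S$, then $S$ is true. Fix interpretations $J : A \rhd U$ and $K : U \rhd A$ witnessing the given mutual interpretability.

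The key technical ingredient is the comparison of $M$ with the composite $M' := (J \circ K) \circ M : A \rhd {\sf S}^1_2$. By the standard Pudl\'ak-style result on sequential theories (\cite{pudl:cuts85}), any two interpretations of ${\sf S}^1_2$ in a sequential theory admit $A$-provably definably isomorphic cuts. Thus there are an $M$-cut $I_1 \subseteq I_0$, an $M'$-cut $I_1'$, and an $A$-provable isomorphism $f : I_1 \to I_1'$ of the interpreted structures. Since bounded formulas, and hence $\Sigma^0_1$-sentences relativised to these cuts, are preserved under $f$, one has $A \vdash S^{M' \circ I_1'} \iff A \vdash S^{M \circ I_1}$ for every $\Sigma^0_1$-sentence $S$.

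Next I would choose an $M$-cut $I$ small enough that the cut it induces on $M'$ via the translation of $J \circ K$---call it $I^\bullet$---is contained in $I_1'$, and set $N := K \circ (M \circ I)$. To verify $\Sigma^0_1$-soundness of $N$: suppose $U \vdash S^N$ for a $\Sigma^0_1$-sentence $S$. Applying $J$ yields $A \vdash S^{M' \circ I^\bullet}$. Since $I^\bullet \subseteq I_1'$ and $\Sigma^0_1$-sentences ascend along cut-inclusions, $A \vdash S^{M' \circ I_1'}$. The isomorphism then gives $A \vdash S^{M \circ I_1}$, and since $I_1 \subseteq I_0$ we get $A \vdash S^{M^\circ}$, whence $S$ is true by the choice of $M^\circ$.

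The main obstacle is the cut bookkeeping: one must verify that $I$ can indeed be chosen small enough that its image under $J \circ K$ falls within $I_1'$. This requires care with how translations act on definable cuts in interpreted number structures, but is made tractable by the closure of cuts under definable shrinking in sequential theories. The remaining ingredients---Theorem~\ref{hulpsmurf} and Pudl\'ak's comparability of cuts---do the heavy lifting.
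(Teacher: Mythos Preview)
The paper does not prove this theorem: it is quoted in Appendix~\ref{notarissmurf} as Theorem~4.15 of \cite{viss:faith05}, among the imported results. So there is no in-paper argument to compare against, and I can only assess your sketch on its own terms.

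Your overall strategy---get a $\Sigma^0_1$-sound cut for $A$ via Theorem~\ref{hulpsmurf}, push it to $U$ along $K$, and test soundness by pulling back along $J$ and invoking Pudl\'ak's comparison of cuts---is the natural one and captures the right ingredients. But the step you yourself flag as the ``main obstacle'' is a genuine gap that your justification does not close. You need an $M$-cut $I$ with $A \vdash I^{JK} \subseteq I_1'$. ``Closure of cuts under definable shrinking'' only gives you that the family $\{I^{JK} : I \text{ an $M$-cut}\}$ is downward directed among $M'$-cuts; it does \emph{not} show this family is coinitial below the specific cut $I_1'$. The self-interpretation $L=J\circ K$ and the Pudl\'ak isomorphism $f:I_1\to I_1'$ are two unrelated maps from the $M$-side to the $M'$-side: $f$ is the canonical comparison, while $I\mapsto I^{L}$ is translation through the internal model $A^{L}$. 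There is no evident reason why shrinking $I$ should eventually force $I^{L}$ inside $I_1'$, and iterating your intersection-and-pullback idea does not converge. Closing this gap requires a genuinely additional argument---for instance, arranging the Pudl\'ak comparison so that it is compatible with $L$, or working with a different choice of $N$ in $U$ whose $J$-pullback lands inside a sound $M'$-cut by construction---none of which your sketch supplies.
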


In our paper, we use the following immediate consequence of Theorem~\ref{zustersmurf}.

\begin{theorem}\label{zustersmurf2}
Suppose $A$ is consistent, finitely axiomatised and sequential. 
Suppose further that $K: A \rhd A$ and $N:A \rhd {\sf S}^1_2$. Then there is
an $N$-cut $I$ such that $K\circ I$ is $\Sigma^0_1$-sound.
\end{theorem}

\begin{proof}
Let $U$ be axiomatised by the $B$ such that $A \vdash B^K$. Clearly, $A \mutint U$. Let $N'$ be the
interpretation promised by Theorem~\ref{zustersmurf}. Take $I$ the common cut of $N$ and $N'$. 
\end{proof}

\begin{remark}
It is easy to see that Theorem~\ref{zustersmurf2} immediately implies Theorem~\ref{zustersmurf}. In fact, 
the natural order is to prove Theorem~\ref{zustersmurf2} first.
\end{remark}

Finally, we sketch some of the details of the treatment of the functor {\sf Seq}.
We remind the reader that the mapping {\sf Seq} is defined as follows.
We start with theory $V$. We extend the signature of $V$ with a unary predicate $\dom$ and
a binary predicate $\in$. The axioms of ${\sf Seq}(V)$ are the axioms of $V$ relativised to
$\dom$ plus the unrelativised axioms of Adjunctive Set Theory {\sf AS} plus the axiom that says that
all elements of $\dom$ are empty sets. $\eta_V$ is the interpretation of $V$ in ${\sf Seq}(V)$ given by
relativisation to $\dom$.

\begin{theorem}
Suppose $V \stackarrow{K} W$. Then there is an interpretation ${\sf Seq}(K)$ such that ${\sf Seq}(V) \stackarrow{{\sf Seq}(K)} {\sf Seq}(W)$.
\end{theorem}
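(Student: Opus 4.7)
The plan is to construct ${\sf Seq}(K)$ by combining $K$ on the $\apr$-relativised $V$-part with the identity interpretation on the AS-part of ${\sf Seq}(W)$. For readability I would first handle the case where $K$ is one-dimensional and not piece-wise, indicating the purely bookkeeping extension at the end. Let $\tau_K$ be the translation underlying $K$, with domain $\delta_K(x)$ and equality $\varepsilon_K(x,y)$.

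Define the translation $\sigma$ of ${\sf Seq}(K)$ by taking its domain to be $\delta_\sigma(x) := (\apr(x) \wedge \delta_K(x)) \vee \neg\,\apr(x)$ and its equality to be $\varepsilon_\sigma(x,y) := (\apr(x) \wedge \apr(y) \wedge \varepsilon_K(x,y)) \vee (\neg\,\apr(x) \wedge \neg\,\apr(y) \wedge x=y)$. Each $V$-symbol is translated via $\tau_K$, $\apr$ is translated to $\apr(x) \wedge \delta_K(x)$, and $x \in y$ is translated to $\neg\,\apr(y) \wedge \exists z\,(z \in y \wedge \varepsilon_\sigma(x,z))$. The twist on $\in$ is what forces membership to respect $\varepsilon_K$ on the $\apr$-side.

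I would then verify the axioms of ${\sf Seq}(V)$ in four blocks. First, the identity axioms: reflexivity, symmetry, and transitivity of $\varepsilon_\sigma$ follow from the corresponding properties of $\varepsilon_K$ and of $=$ on each side of the disjunction; congruence of the translated $\apr$ uses congruence of $\delta_K$ with respect to $\varepsilon_K$; congruence of $V$-symbols is inherited from $K$; and congruence of the translated $\in$ in both arguments is built into the definition. Second, the $V$-axioms relativised to $\apr$ in ${\sf Seq}(V)$ translate to the $V$-axioms relativised to $\apr(x) \wedge \delta_K(x)$ in ${\sf Seq}(W)$, which holds because $K:V \rhd W$ and the $\apr$-part of ${\sf Seq}(W)$ is a model of $W$ via $\eta_W$. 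Third, the unrelativised AS axioms: the empty set is witnessed by any $\apr$-element satisfying $\delta_K$ (whose existence follows from $K$'s identity axioms), since the translated $\in$ demands $\neg\,\apr$ on the right; adjunction is witnessed by the ${\sf Seq}(W)$-adjunction $z := x \cup \{y\}$, which is forced outside $\apr$ by containing $y$ and hence lies in $\delta_\sigma$, and an unfolding shows that $w \in^\sigma z$ iff $w \in^\sigma x$ or $\varepsilon_\sigma(w,y)$ for all $w$ in $\delta_\sigma$. Fourth, the "ur-elements are empty" axiom is immediate because the translated $\in(\cdot,x)$ contains the conjunct $\neg\,\apr(x)$.

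The main obstacle lies in the adjunction verification under non-trivial $\varepsilon_K$: one has to check that although elements of $z$ outside $\delta_\sigma$ can exist in ${\sf Seq}(W)$, they are invisible from inside the interpretation, and the $\varepsilon_K$-identifications on the $\apr$-side are correctly absorbed via the $\exists z$ clause in the translation of $\in$. The extension to multi-dimensional, piece-wise $K$ is pure bookkeeping: one takes ${\sf Seq}(K)$ piece-wise, with one family of pieces of the arities of $K$ covering the $\apr$-part and a single one-dimensional piece covering the non-$\apr$ sets, then adjusts $\varepsilon_\sigma$ and the translated $\in$ to pair tuples from the first family with singletons from the second in the evident way.
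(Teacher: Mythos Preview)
Your construction is correct and yields a valid interpretation of ${\sf Seq}(V)$ in ${\sf Seq}(W)$, but it takes a genuinely different route from the paper. The paper first composes $K$ with $\eta_W$ to obtain an interpretation of $V$ in ${\sf Seq}(W)$, then uses the sequence machinery available in ${\sf Seq}(W)$ to rebuild this as a one-dimensional, one-piece interpretation $K^\ast$; only after this flattening does it assemble ${\sf Seq}(K)$ as a two-piece interpretation in which set-membership is \emph{re-encoded} via tagged pairs $\tupel{0,x'}$ and $\tupel{1,x}$ inside the non-$\apr$ elements. You instead leave $K$ in place on the $\apr$-side and reuse the ambient $\in$ of ${\sf Seq}(W)$ directly, filtered through the new equality. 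In the one-dimensional case this is more elementary---no re-coding of $\in$ is needed, and adjunction unfolds cleanly---whereas the paper's tagging buys a uniform treatment of all arities at once.

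Two points deserve tightening. First, $\delta_K$ and $\varepsilon_K$ are $W$-formulas with quantifiers, so when you write $\apr(x)\wedge\delta_K(x)$ inside ${\sf Seq}(W)$ you should mean the $\eta_W$-relativised versions $\delta_K^{\eta_W}$, $\varepsilon_K^{\eta_W}$; without the relativisation the inner quantifiers range over the whole ${\sf Seq}(W)$-domain, where the $W$-axioms say nothing. Second, the multi-dimensional, piece-wise case is not quite ``pure bookkeeping'': once urelements are $n$-tuples you must explain what it means for such a tuple to be a member of a single non-$\apr$ element $y$, and any reasonable answer (e.g., $y$ contains a code for an $\varepsilon_K$-equivalent tuple) invokes exactly the sequence coding that the paper deploys to flatten $K$ up front. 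So in the general case the two approaches converge on the same device; the paper simply applies it earlier and uniformly, while you defer it to the higher-dimensional clause.
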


\begin{proof}
Suppose $V \stackarrow{K} W$.
Consider the interpretation $K' := \theta_V \circ K$ of $V$ in ${\sf Seq}(W)$. We note that $K$ may be piece-wise and multidimensional
(with possibly different dimensions for the pieces). 
Since in ${\sf Seq}(W)$ we have the full machinery of sequences available, we can rebuild $K'$ to a one-dimensional, one-piece interpretation
$K^\ast$ that is ${\sf Seq}(W)$-provably definably isomorphic to $K'$. We extend $K^\ast$ to the interpretation $M :={\sf Seq}(K)$ as
follows.
\begin{itemize}
\item
$M$ has two pieces $\mathfrak s$ and $\mathfrak o$ both of dimension 1.
\item
$\delta_{M}^{\mathfrak o}$ is $\delta_{K^\ast}$ and $\delta_{M}^{\mathfrak s}$ is the complement of $\dom$.
\item
Identity between elements of  different pieces is always $\bot$. We have: $x =_{M}^{\mathfrak{oo}} y$ iff 
$x$ and $y$ are in $\delta_{M}^{\mathfrak o}$ and $x =_{K^\ast} y$, and   $x =_{M}^{\mathfrak{ss}} y$
 iff 
$x$ and $y$ are in $\delta_{M}^{\mathfrak s}$ and $x = y$.
\item
$\dom_M^{\mathfrak o}$ is $\delta_{M}^{\mathfrak o}$ and $\dom_M^{\mathfrak s}$ is empty.
\item
Let $P$ be an $n$-ary predicate of the $V$-language. Then $P_{M}^{\mathfrak o,\dots,\mathfrak o}(\vec x\,)$ tells us that
each $x_i$ is in $\delta_M^{\mathfrak o}$ and that $P_{K^\ast}(\vec x\,)$. If any variable in $P(\vec x\,)$ is assigned
a non-$\mathfrak o$ piece, then $P_M(\vec x\,)$ is false.
\item
We have:
\begin{itemize}
\item
$x \in_{M}^{\mathfrak{oo}} y$ and $x \in_{M}^{\mathfrak{so}} y$ are false. 
\item
$x \in_{M}^{\mathfrak{os}} y$ iff $x\in \delta_M^{\mathfrak o}$, $y\in \delta_M^{\mathfrak s}$, and
there is an $x'$ with $x=_{K^\ast} x'$ and ``$\tupel{0,x'} \in y$''. Here the scare quotes are there to
remind us that we do not have extensionality. So we really mean here: there is an empty set $z$ and there
is a pair of the form $\tupel{z,x'}$ such that \dots
\item
$x \in_{M}^{\mathfrak{ss}} y$ iff $x\in \delta_M^{\mathfrak s}$, $y\in \delta_M^{\mathfrak s}$, and
 ``$\tupel{1,x} \in y$''.
\end{itemize}
\end{itemize}
 It is easy to verify that the interpretation sketched here indeed is an interpretation of ${\sf Seq}(V)$ in $W$. 
\end{proof}

It would be nice if we could prove that {\sf Seq} lifts to a functor in $\mathbb E$. However,
we do not quite see that how that can work for the $\in$-part.

\begin{question}\label{q10}
Does {\sf Seq} or, some appropriate variant of it, lift to a functor on $\mathbb E$? 
\end{question}

\begin{theorem}
Suppose $V$ is sequential. Then, ${\sf Seq}(V) \mutint V$.
\end{theorem}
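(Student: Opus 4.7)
The plan is to split the proof into the two directions of mutual interpretability. The forward direction ${\sf Seq}(V) \rhd V$ is essentially free: the interpretation $\eta_V$, obtained by relativising to $\apr$, is already part of the very definition of ${\sf Seq}(V)$, since ${\sf Seq}(V)$ contains the axioms of $V$ relativised to $\apr$. So the real content lies in the other direction $V \rhd {\sf Seq}(V)$, which is where the assumption of sequentiality will be used.

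For the interesting direction, I would build a two-piece interpretation $M : V \rhd {\sf Seq}(V)$, modelled on the construction given for ${\sf Seq}(K)$ earlier in the paper. The $\mathfrak o$-piece has domain the full domain of $V$ and carries the $V$-structure via the identical translation; this is where $\apr_M^{\mathfrak o}$ is declared true and the $V$-predicates are interpreted as in $V$. The $\mathfrak s$-piece is built from the sequential apparatus of $V$: since $V$ is sequential, it already has ${\sf AS}$ interpretable on a piece whose elements are sequence codes, and such codes can hold arbitrary $V$-objects as entries. I would take the $\mathfrak s$-piece to consist of codes of nonempty finite ``sets'' built by iterated adjunction, where each entry is tagged as either an $\mathfrak o$-element (pointing to an object of $V$) or an $\mathfrak s$-element (pointing to another code), mirroring the $\mathfrak{os}/\mathfrak{ss}$ clauses in the definition of ${\sf Seq}(K)$. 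The symbol $\in$ is then defined in the obvious way: $x \in_M y$ can only hold when $y$ is an $\mathfrak s$-code, so elements of the $\mathfrak o$-piece are automatically empty sets.

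To finish I would verify that the axioms of ${\sf Seq}(V)$ are satisfied: the axioms of $V$ relativised to $\apr$ hold by construction of the $\mathfrak o$-piece; the axiom that all elements of $\apr$ are empty sets holds because the $\in$-clauses rule out anything belonging to an $\mathfrak o$-element; and the axioms of ${\sf AS}$ (empty set and adjunction, both unrelativised, so quantifiers range over $\mathfrak o \sqcup \mathfrak s$) are provable using the sequence coding of $V$, since adjoining any $\mathfrak o$- or $\mathfrak s$-element to an existing $\mathfrak s$-code just means extending the underlying sequence with the appropriately tagged entry. The main technical obstacle is the adjunction axiom in its most general form: we must adjoin elements of either piece to a set, which is why the coding must carry the tag distinguishing $\mathfrak o$-entries from $\mathfrak s$-entries and why extensionality is not needed (and not claimed). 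Once the coding is set up correctly, all the remaining verifications are routine, essentially amounting to re-using the standard fact that a sequential theory internally builds models of ${\sf AS}$ over any available collection of ``atoms.''
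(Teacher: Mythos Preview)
Your proposal is correct and follows essentially the same route as the paper: the forward direction via $\eta_V$, and the backward direction via a two-piece interpretation with an $\mathfrak o$-piece carrying $V$ identically (and supplying the empty sets) and an $\mathfrak s$-piece carrying the ${\sf AS}$-structure, with a tag distinguishing $\mathfrak o$-entries from $\mathfrak s$-entries so that adjunction works across pieces. The only cosmetic difference is that the paper simply takes the $\mathfrak s$-piece to be a second copy of the full domain and uses the witnessing $\in^\star$ directly with the tagging $\tupel{0,x}\in^\star y$ and $\tupel{1,x}\in^\star y$, rather than restricting to ``codes of nonempty finite sets''; this avoids any worry about whether such a restricted class is first-order definable in a weak $V$, and is a little cleaner, but the underlying idea is the same.
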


\begin{proof}
We have $V \stackarrow{\eta_V} {\sf Seq}(V)$. 
 Let $\in^\star$ be a $V$-formula that witnesses that $V$ is sequential.
 Our interpretation $M$ of ${\sf Seq}(V)$ in $V$ looks as follows.
 We duplicate the domain by having two pieces $\mathfrak s$ and $\mathfrak o$ with
 domain $x=x$. We let $\dom$ correspond to the elements in the $\mathfrak o$-piece
 and have the predicates of $V$ on the $\mathfrak o$-piece. We take $x\in_M^{\mathfrak {os}} y$ iff $x$ and $y$ are
 in the correct domains and
 ``$\tupel{0,x} \in^\star y$''. Similarly, $x\in_M^{\mathfrak {ss}} y$ iff $x$ and $y$ are
 in the correct domains and
 ``$\tupel{1,x} \in^\star y$''. We set $\in_M$ to $\bot$ in the other cases.
\end{proof}

\section{Proof of Theorems~\ref{gelijkheidssmurf} and \ref{machtigesmurf}}\label{verismurf}
This appendix contains the proofs of Theorems~\ref{gelijkheidssmurf} and \ref{machtigesmurf} plus some
discussion on an alternative more categorical formulation of Theorem~\ref{machtigesmurf}.

\subsection{Proof of Theorem~\ref{gelijkheidssmurf}}\label{progel}
 Suppose $U_0$ is Friedman-reflexive and $A_0$, $A_1$ are finitely axiomatised.
 Suppose further that $U_0\stackarrow{K_0}A_0$, $U_1\stackarrow{K_1}A_1$ and
 $K_0 \approx K_1$. We want to show that $U_1$ is Friedman-reflexive and
 that  $\Lambda^{\sf fr}_{K_0} = \Lambda^{\sf fr}_{K_1}$. 
 
Since,  $K_0 \approx K_1$, we can find $M,\breve M, P,\breve P$ such that the following
diagram commutes.
  \[
 \begin{tikzcd}
 U_0 \rar{K_0}\arrow[leftrightarrow]{d}[swap]{M, \breve M} & A_0 \arrow[leftrightarrow]{d}{P,\breve P} \\
 U_1 \rar{K_1} & A_1
 \end{tikzcd}
 \]
 Here $M$ and $P$ correspond to the down-direction of the isomorphism and $\breve M$ and
 $\breve P$ go up.
 
 The fact that $U_1$ is Friedman reflexive is immediate from Theorem~\ref{terugtreksmurf}.
  
 \begin{lemma}\label{atocom}
 $A_0 \vdash \graydi_{K_0,A_0} B \iff \graydi_{K_1,A_1}^{\breve P} B^P$ and  $A_1 \vdash \graydi_{K_1,A_1} D \iff \graydi^{P}_{K_0,A_0} D^{\breve P}$.
 \end{lemma}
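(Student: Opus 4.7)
The plan is to exploit the equation $K_1 \circ M = P \circ K_0$ in $\mathbb E$ to transport the $U_1$-interpreter across the diagram. First I would derive the companion commutation: in $\mathbb E$,
\[ \breve P \circ K_1 = \breve P \circ K_1 \circ (M \circ \breve M) = \breve P \circ (K_1 \circ M) \circ \breve M = \breve P \circ (P \circ K_0) \circ \breve M = (\breve P \circ P) \circ K_0 \circ \breve M = K_0 \circ \breve M, \]
using $M \circ \breve M = {\sf Id}_{U_1}$ and $\breve P \circ P = {\sf Id}_{A_0}$. Consequently, for every $U_1$-sentence $F$, we have $A_0 \vdash (F^{K_1})^{\breve P} \iff (F^{\breve M})^{K_0}$.

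Next I would invoke Theorem~\ref{terugtreksmurf} twice. The pair $(\breve M, M)$ presents $U_1$ as an $\mathbb E$-retract of $U_0$ (since $M \circ \breve M = {\sf Id}_{U_1}$), so $U_1 \vdash {\sf C}_{U_1}(E) \iff ({\sf C}_{U_0}(E))^M$ for every finitely axiomatised $E$; dually, $(M, \breve M)$ presents $U_0$ as a retract of $U_1$, yielding $U_0 \vdash {\sf C}_{U_0}(E) \iff ({\sf C}_{U_1}(E))^{\breve M}$. Applying the first paragraph's equivalence to $F := {\sf C}_{U_1}(A_1 \wedge B^P)$ and then the second retraction identity gives
\[ A_0 \vdash (\graydi_1 B^P)^{\breve P} \iff ({\sf C}_{U_0}(A_1 \wedge B^P))^{K_0}. \]

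Finally, $A_0 \wedge B$ and $A_1 \wedge B^P$ are mutually interpretable: since $A_1 \vdash A_0^P$, we have $A_1 + B^P \vdash (A_0 \wedge B)^P$; since $A_0 \vdash A_1^{\breve P}$ and $\breve P \circ P = {\sf Id}_{A_0}$ in $\mathbb E$, we have $A_0 + B \vdash (A_1 \wedge B^P)^{\breve P}$. By Theorem~\ref{minismurf} applied in both directions, $U_0 \vdash {\sf C}_{U_0}(A_0 \wedge B) \iff {\sf C}_{U_0}(A_1 \wedge B^P)$, and translating along $K_0$ yields $A_0 \vdash \graydi_0 B \iff ({\sf C}_{U_0}(A_1 \wedge B^P))^{K_0}$. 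Chaining with the preceding display completes the first equivalence. The second equivalence follows by the completely symmetric argument, swapping indices $0 \leftrightarrow 1$ and inverse pairs $M \leftrightarrow \breve M$, $P \leftrightarrow \breve P$ (the hypothesis $K_1 \circ M = P \circ K_0$ is preserved by this swap, since it becomes the companion commutation already derived in paragraph one).

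The main obstacle is bookkeeping: one must track which identities hold in $\mathbb E$ (and hence translate to provable biconditionals after any further interpretation) versus which hold as ordinary provable equivalences in the individual theories $U_0$, $U_1$, $A_0$, $A_1$, and verify that the identity-axiom conventions of Section~\ref{basics} are respected when expressing mutual interpretability of the conjoined sentences $A_0 \wedge B$ and $A_1 \wedge B^P$. Once those details are in place, the equivalence is essentially forced by chasing the square.
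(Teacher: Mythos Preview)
Your argument is correct and arrives at the same destination as the paper, but the route is packaged differently. The paper proves directly the $U_0$-level equivalence
\[(\dag)\;\; U_0 \vdash {\sf C}_0(A_0\wedge B) \iff {\sf C}_1^{\breve M}(A_1 \wedge B^P)\]
via explicit interpretation chains (one direction by $U_0 + {\sf C}_1^{\breve M}(A_1\wedge B^P) \rhd U_1 + {\sf C}_1(A_1\wedge B^P) \rhd (A_1\wedge B^P) \rhd (A_0\wedge B)$, the other by a symmetric argument plus substitution), and only then transports along $K_0$ using $\breve M K_0 = K_1\breve P$. You instead factor $(\dag)$ into two previously established ingredients: the retraction identity $U_0 \vdash {\sf C}_0(E) \iff ({\sf C}_1(E))^{\breve M}$ from Theorem~\ref{terugtreksmurf}, and the mutual-interpretability step $U_0 \vdash {\sf C}_0(A_0\wedge B) \iff {\sf C}_0(A_1\wedge B^P)$ from Theorem~\ref{minismurf}. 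Your decomposition is cleaner in that it reuses earlier lemmas rather than rerunning their proofs inline; the paper's version is more self-contained. One small point worth making explicit in your write-up: your second invocation of Theorem~\ref{terugtreksmurf} (to get the identity in $U_0$) requires $U_1$ to be Friedman-reflexive, which follows from the first invocation but should be stated.
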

 
 \begin{proof}
 By symmetry, we only have to prove the first conjunct of the lemma.  We first prove:
 \[ (\dag)\;\;\; U_0 \vdash \ccd {(U_0),A_0} B \iff \cct{\breve M} {(U_1),A_1}{B^P}.\]
 We have:
 \begin{eqnarray*}
 U_0 + \cct{\breve M} {(U_1),A_1}{B^P} & \rhd & U_1+ \ccd{(U_1),A_1} {B^P} \\
 & \rhd &  (A_1 + B^P)  \\
 & \rhd & (A_0+B)
 \end{eqnarray*}
 It follows that
 \[ (\ddag)\;\;\; U_0 +  \cct{\breve M} {(U_1),A_1}{B^P} \vdash \ccd{(U_0),A_0}  B.\]
 By symmetry, we find 
  $U_1 +  \cct{M} {(U_0),A_0}{D^Q} \vdash \ccd{(U_1),A_1} D$.
  Substituting $B^P$ for $D$ gives  $U_1 +  \cct{M} {U_0,A_0}{ B^{PQ}} \vdash \ccd{(U_1),A_1}{B^P}$.
  By Theorem~\ref{minismurf}, we may replace provable equivalents under $\ccz_{(U_0)}$, so:
  $U_1 +  \cct{M}{(U_0),A_0}{B} \vdash \ccd{(U_1),A_1}{B^P}$.
  It follows that \[U_0 +  \cct{M\breve M}{(U_0),A_0}{B} \vdash \cct{\breve M}{(U_1).A_1}{B^P}\]
   and, hence,
  \[\$ \;\;\;\;\; U_0 +  \ccd{(U_0),A_0}{B} \vdash \cct{\breve M}{(U_1),A_1}{B^P}.\]
  Combining (\ddag) and (\$), we find (\dag). In its turn (\dag) gives us:\qedright
  \begin{eqnarray*}
  A_0 \vdash \graydi_{K_0,A_0} B & \iff & \cct{K_0}{(U_0),A_0}{B} \\
  & \iff & \cct{\breve MK_0}{(U_1),A_1}{B^P} \\
   & \iff & \cct{ K_1\breve P}{(U_1),A_1}{B^P} \\
   & \iff & \graydi_{K_1,A_1}^{\breve P} B^P
   \end{eqnarray*}
\end{proof}

Suppose $\tau_i$ is a function from the propositional atoms to the $A_i$-language.
To simplify notations a bit we will confuse, e.g., $P$ and the mapping $D \mapsto D^P$.

\begin{lemma}
We have: 
\[A_0 \vdash \phi^{(\tau_0,K_0)} \iff \phi^{(P\circ\tau_0,K_1)\breve P}\text{ and }
A_1 \vdash \phi^{(\tau_1,K_1)} \iff \phi^{(\breve P\circ\tau_1,K_0) P}.\]
\end{lemma}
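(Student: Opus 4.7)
The plan is to proceed by induction on the modal formula $\phi$, treating only the first conjunct; the second conjunct follows by symmetry of the hypotheses (swap the roles of $K_0$ and $K_1$ and of $P$ and $\breve P$).

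For the base case, $\phi = p$, we have $p^{(\tau_0,K_0)} = \tau_0(p)$ and $p^{(P\circ\tau_0,K_1)\breve P} = (\tau_0(p))^{P\breve P}$. Since $P,\breve P$ are mutually inverse in $\mathbb E$, the composition $P\breve P$ is the identity in $\mathbb E$, so $A_0 \vdash \tau_0(p) \iff (\tau_0(p))^{P\breve P}$. The truth-functional cases are immediate because translations commute with Boolean connectives and the induction hypothesis supplies $A_0$-provable equivalence of the immediate subformulas under the two readings.

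The main work is the modal case $\phi = \oco \psi$. Unfolding definitions, the left-hand side is $\graydi_0 \psi^{(\tau_0,K_0)}$. First apply the induction hypothesis and congruence of $\graydi_0$ under $A_0$-provable equivalence (which is clause (a) of Lemma~\ref{loebsmoerf}) to replace $\psi^{(\tau_0,K_0)}$ by $\psi^{(P\circ\tau_0,K_1)\breve P}$ inside $\graydi_0$. Then apply the first conjunct of Lemma~\ref{atocom} with $B := \psi^{(P\circ\tau_0,K_1)\breve P}$ to conclude
\[
A_0 \vdash \graydi_0\, \psi^{(P\circ\tau_0,K_1)\breve P} \iff \graydi_1^{\breve P}\, \psi^{(P\circ\tau_0,K_1)\breve P P}.
\]
Finally, $\breve P P$ is the identity in $\mathbb E$, so $\psi^{(P\circ\tau_0,K_1)\breve P P}$ is $A_1$-provably equivalent to $\psi^{(P\circ\tau_0,K_1)}$; translating this $A_1$-equivalence under $\breve P$ and using congruence of $\graydi_1$ again yields
\[
A_0 \vdash \graydi_1^{\breve P}\, \psi^{(P\circ\tau_0,K_1)\breve P P} \iff (\graydi_1\, \psi^{(P\circ\tau_0,K_1)})^{\breve P}=(\oco\psi)^{(P\circ\tau_0,K_1)\breve P},
\]
which completes the modal step.

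The main subtlety to keep straight is the bookkeeping with the translations: one has to remember that \emph{sameness in $\mathbb E$} means provable equivalence of the images of all sentences, so $P\breve P$ and $\breve P P$ can be substituted for identity only inside a provability context, and congruence of $\graydi_i$ under such equivalence (which is exactly Lemma~\ref{loebsmoerf}(a)) is what makes the inductive step legal. Beyond this bookkeeping the argument is entirely routine.
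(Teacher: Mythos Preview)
Your proposal is correct and follows essentially the same approach as the paper: induction on $\phi$, the atomic case via $P\breve P$ being the identity in $\mathbb E$, the propositional cases trivially, and the $\oco$-case by first using the induction hypothesis together with congruence of $\graydi_0$, then invoking Lemma~\ref{atocom}, and finally simplifying $\breve P P$ to the identity. Your added remarks about where exactly sameness-in-$\mathbb E$ and Lemma~\ref{loebsmoerf}(a) are used make the bookkeeping more explicit than the paper does, but the argument is the same.
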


\begin{proof}
We prove the first conjunct.
Our proof is by induction on $\phi$. In the atomic case we have:
\begin{eqnarray*}
A_0 \vdash p^{(\tau_0,K_0)} &\iff & \tau_0(p) \\
& \iff & (\tau_0(p))^{P\breve P}\\
& \iff &  p^{(P\circ\tau_0,K_1)\breve P}
\end{eqnarray*}
The cases of the truth-functional connectives are simple. Finally, for the case of $\oco$, we employ Lemma~\ref{atocom}.
\qedright
\begin{eqnarray*}
A_0 \vdash (\oco\psi)^{(\tau_0,K_0)} & \iff & \graydi_{K_0,A_0}\psi^{(\tau_0,K_0)}\\
& \iff & \graydi_{K_0,A_0} \psi^{(P\circ\tau_0,K_1)\breve P}\\
& \iff & (\graydi_{K_1,A_1} \psi^{(P\circ\tau_0,K_1)\breve P P})^{\breve P}\\
& \iff & (\graydi_{K_1,A_1}\psi^{(P\circ\tau_0,K_1)})^{\breve P} \\
& \iff & (\oco\psi)^{(P\circ\tau_0,K_1)\breve P}
\end{eqnarray*}
\end{proof}

Finally, we prove the theorem.
We have:
\begin{eqnarray*}
 A_1 \vdash \phi^{(P\circ\tau,K_1)} 
& \To & A_0 \vdash  \phi^{(P\circ\tau,K_1)\breve P} \\
& \To &  A_0 \vdash \phi^{(\tau,K_0)}
\end{eqnarray*}

It follows that, if $\Lambda_{K_1}^{\sf fr} \vdash \phi$, then $\Lambda_{K_0}^{\sf fr} \vdash \phi$. By symmetry, we also have the other direction.

\subsection{Proof of Theorem~\ref{machtigesmurf}}
Let $F$ be an endofunctor of $\mathbb D$. Here we assume that $F$ operates on concrete
theories and interpretations. Suppose that:
\begin{enumerate}[A.]
\item
$F$ preserves finite axiomatisability.
\item
 For each theory $V$, we have a faithful interpretation  $V \stackarrow{\eta_V} F(V)$.
 \item
 Suppose $\Gamma$ is a set of sentences in the $V$-language.
 Then, $F(V+\Gamma) = F(V) + \Gamma^{\eta_V}$.
\end{enumerate}
We note that we can drop the `faithful' in Condition (B), if we demand that $F$ preserves consistency.

Let $U$ be Friedman-reflexive and suppose $F(V) \stackarrow{M_V} V$, for all extensions
$V$ of $U$ in the same language.
Let $A$ be finitely axiomatised and suppose  $U \stackarrow{K} A$.

We show that $\Lambda^{\sf fr}_{\eta_A \circ K} \subseteq \Lambda^{\sf fr}_K$.

\begin{proof}
For some $P$, we have $(A+B) \stackarrow{P} (U+\ccd AB)$. Let $M' := M_{U+\ccd AB}$.
We have:
\[ 
\begin{tikzcd}
(F(A)+B^{\eta_A})  =  F(A+B)  \rar{F(P)} & F(U+\ccd AB) \rar{M'} & (U+\ccd AB) 
\end{tikzcd}
\]
So, (a) $U \vdash \ccd AB \to \ccd{F(A)}{B^{\eta_A}}$.

For some $Q$, we have $Q: (F(A)+B^{\eta_A}) \stackarrow{Q} (U+ \ccd{F(A)}{B^{\eta_A}})$.
So, we have: 
\[ 
\begin{tikzcd}
(A+B) \rar{\eta_{A+B}} & F(A+B) = (F(A)+B^{\eta_A}) \rar{Q} & (U+ \ccd{F(A)}{B^{\eta_A}})
\end{tikzcd}
\]
So, (b)
$U \vdash \ccd{F(A)}{B^{\eta_A}} \to \ccd AB$. Combining (a) and (b), we find:
\[U \vdash \ccd{F(A)}{B^{\eta_A}} \iff \ccd AB.\]

\noindent
We may conclude that: 
\[ (\dag)\;\;\; F(A) \vdash \graydi_{\eta_A\circ K,F(A)}B^{\eta_A} \iff \graydi^{\eta_A}_{K,A}B.\]
Let $\sigma$ be any function from propositional variables to sentences of the $A$-language.
To lighten our notational burdens a bit, we will confuse $\eta_A$ and the mapping: $D \mapsto D^{\eta_A}$.
By induction, we prove that $F(A) \vdash \phi^{(\eta_A \circ\sigma, \eta_A\circ K)} \iff (\phi^{(\sigma,K)})^{\eta_A}$.
We treat the case of $\oco$. Let $\phi := \oco\psi$. Using (\dag), we find:
\begin{eqnarray*}
F(A) \vdash \phi^{(\eta_A \circ\sigma, \eta_A \circ K)} & \iff & 
 \graydi_{\eta_A \circ K,F(A)}\psi^{(\eta_A \circ\sigma,\eta_A \circ K)} \\
 & \iff &
 \graydi_{\eta_A \circ K, F(A)}(\psi^{(\sigma,K)})^{\eta_A} \\
 & \iff & \graydi^{\eta_A}_{K,A}\psi^{(\sigma,K)}  \\
 & \iff &  (\phi^{(\sigma,K)})^{\eta_A}
\end{eqnarray*}
From the fact that $\eta_A$ is faithful, we now have:
\begin{eqnarray*}
(\ddag)\;\;  F(U) \vdash \phi^{(\eta_A\circ \sigma, \eta_A\circ K)}  & \Iff  & F(U) \vdash (\phi^{(\sigma,K)})^{\eta_A} \\
& \Iff & U \vdash \phi^{(\sigma,K)}
\end{eqnarray*} 
Our theorem is immediate from (\ddag).
\end{proof}

\begin{remark}
The conditions for Theorem~\ref{machtigesmurf} look somewhat \emph{ad hoc}. Especially,
Condition (C) goes into the hardware in an unelegant way. The following (more specific) conditions look a little bit better.
However, they have he disadvantage that they do
not (yet) apply to our main application: we did not supply a version of  {\sf Seq} that is an endofunctor of $\mathbb E$.

We work in category $\mathbb E$ enriched with designated embedding arrows $V\stackarrow{{\sf E}_{VW}}W$
between theories of the same signature that are based on the identity translation.
We demand that $F$ is a functor on the enriched category that preserves (modulo sameness)
the embedding arrows. Let $\bbot$ be the theory in the language of identity axiomatised by $\bot$.

Our new conditions are as follows. 
\begin{enumerate}[A${}^\star$.]
\item
$F$ preserves finite axiomatisability.
\item
If $\bbot \stackarrow{} F(V)$, then $\bbot \stackarrow{} V$.
 \item
 For each $V$, there is an interpretation  $V \stackarrow{\eta_V} F(V)$ such that:
\[
\begin{tikzcd}
&& Z \\
W \rar{\eta_W} \arrow[bend left]{urr}{P} & F(W)\arrow[dotted]{ur}{\subseteq} & \\
V\arrow{u}{\subseteq} \rar{\eta_V} & F(V) \arrow[swap]{u}{\subseteq} \arrow[bend right,swap]{uur}{\subseteq} &
\end{tikzcd}
\]

\noindent
If $V \subseteq W$, then, ${\sf E}_{F(V)F(W)} \circ \eta_V = \eta_W \circ {\sf E}_{VW}$.
Moreover, whenever ${\sf E}_{F(V)Z} \circ \eta_V = P \circ {\sf E}_{VW}$, then
$F(W) \subseteq Z$ and $P = {\sf E}_{F(W)Z} \circ \eta_W$. 
\end{enumerate}

\noindent
It is easy to see that the new conditions imply the old ones. \end{remark}

 \section{Local Logics for a Frame}\label{funo}
 There are other notions that can be considered than just the logic associated with an FM-frame or an FM-interpretation.
 Let $\tupel{A,U}$ be an FM-frame.
  Let $X$ range over finite sets of assignments from the propositional variables to $A$-sentences and let
 $K$ range over interpretations $K:A\rhd U$. We define:
 \begin{itemize}
 \item
 $\Lambda^\star_{A,U} := \verz{ \phi \mid \forall X \, \exists K\, \forall \sigma\in X\,A \vdash \phi^{(\sigma,K)}}$. \\
 \item
 $\Lambda$ \emph{is a local logic for $\tupel{A,U}$} if $\Lambda$ is a subset of  
 $\Lambda^\star_{A,U}$ that contains {\sf K}4 and is closed under
 modus ponens, necessitation, and substitution. 
 \end{itemize}
 
 \noindent
 We have the following obvious facts.
 
 \begin{theorem}
 Consider an FM-frame $\tupel{A,U}$. Then,
  $\Lambda^\star_{A,U}$ is closed under substitution and necessitation. Moreover, if
  $\phi$ is in  $\Lambda^\star_{A,U}$ and $\psi$ is in  $\Lambda^{\sf fr}_{A,U}$, then
  $(\phi \wedge \psi)$ is in $\Lambda^\star_{A,U}$.
 \end{theorem}
 
  \begin{theorem}
   Consider an FM-frame $\tupel{A,U}$. 
A local logic for $\tupel{A,U}$ is a logic. Moreover, every $\phi\in \Lambda^\star_{A,U}$ is contained
in a minimal local logic containing it.
  \end{theorem}
  
  \begin{theorem}
   Consider an FM-frame $\tupel{A,U}$. Let $\Lambda$ be a local logic for $\tupel{A,U}$.
   Then, the logic generated by $\Lambda$ and $\Lambda^{\sf fr}_{A,U}$ is a local logic for
   $\tupel{A,U}$.
  \end{theorem}
  
  We proceed with a characterisation of reflection.
 Consider theories $V$ and $W$. We say that $\mutint$ has the \emph{forward property} for
$V,W$ iff, for all $B$ in the $V$-language, there is a $C$ in the $W$-language, such that $(V+B) \mutint (W+C)$.

\begin{theorem}\label{localsmurf}
Consider an FM-frame $\tupel{A,U}$.
Then, $\mutint$ has the forward property for $A$, $U$ iff 
 {\sf S}4 is a local logic for $\tupel{A,U}$.
\end{theorem}

\begin{proof}
Suppose $\mutint$ has the forward property for $A,U$. Consider any sentences $B_0$, \dots, $B_{n-1}$
in the language of $A$. Let $B^\star_0$, \dots, $B^\star_{k-1}$ enumerate all conjunctions of the form $\bigwedge_{i<n} \pm B_i$,
where $\pm B_i$ is either $B_i$ or $\neg B_i$. 

Consider any $B_j^\star$.
We have, for some $C_j^\star$ that $ (A+B_j^\star) \mutint (U+C_j^\star)$.
It follows that $U \vdash C_j^\star \to \ccd{A}{B_j^\star}$.  Ergo, $(A+B_j^\star) \rhd (U+\ccd{A}{B_j^\star})$.
Let $K_j$ be the witnessing interpretation.
It follows that $A+B_j^\star\vdash \graydi_{K_j, A} B_j^\star$. If $B_i$ is unnegated in $B_j^\star$, then 
$A \vdash \graydi_{K_j, A} B_j^\star \to \graydi_{K_j, A} B_i$.
 If $B_i$ is negated in $B_j^\star$, then 
$A +B_j^\star \vdash   B_i\to \graydi_{K_j, A} B_i$.
So,
$A+B_j^\star\vdash \bigwedge_{i<n} (B_i \to \graydi_{K_j,A} B_i)$

Now let $K := K_0\tupel{B^\star_0}(K_1\tupel{B_1^\star} \dots)$. Because the $B_j^\star$ are mutually exclusive, we find that, for each $j<k$, we have 
$A+B_j^\star\vdash \bigwedge_{i<n} (B_i \to \graydi_{K,A} B_i)$ and, thus 
$A\vdash \bigwedge_{i<n} (B_i \to \graydi_{K,A} B_i)$. From this, it is immediate that
$p \to \oco p$ is in $\Lambda_{A,U}^\star$. Moreover, {\sf S}4 is the logic generated by $p\to \oco p$ over {\sf K}4.

\medskip
In the other direction, suppose {\sf S}4 is a local logic for $\tupel{A,U}$. Consider any $B$ in the $A$-language. For some $K:A \rhd U$, we have
 $A+B \vdash \graydi_{K,A}B$. So, $(A+B) \rhd (U+ \ccd AB)$. Moreover, $(U+ \ccd AB) \rhd (A+B)$.
\end{proof}
 
 \begin{question}\label{q11}
 The second part of the proof of Theorem~\ref{localsmurf} only uses a singleton set $X$. We wonder
 whether that means that our approach may be simplified.
 \end{question}
 
 We note that it follows that the logic generated by {\sf S}4 and $\Lambda^\star_{A,U}$ is a local logic for $\tupel{A,U}$.

\section{List of Questions}\label{vraagsmurf}
Here are all questions asked in the paper.

\medskip
{\footnotesize
\begin{enumerate}[{\sf Q}1.]
\item
 Is there a theory $U$ that is consistent, effectively Friedman-reflexive and not strongly essentially reflexive?
 This is Question~\ref{q1}.
 \item
Can we find a more inspiring example of a theory with logic {\sf S}4 than Example~\ref{losersmurf}?
Is it, perhaps, possible to find an FM-interpretation with interpreter logic precisely {\sf S}4?
 This is Question~\ref{q1A}.
\item
Suppose $K,M:U \lhd A$ are FM-interpretations and  ${\sf Th}(K)={\sf Th}(M)$. 
Do we have  $\Lambda^{\sf fr}_K = \Lambda^{\sf fr}_{M}$,
 or is there a counter-example? This is Question~\ref{q2}.
 \item
 Is there an example of an FM-interpretation $K:A\rhd U$, where $U$ is complete, with an interesting interpreter logic?
 This is Question~\ref{q2A}.
 \item
 Is there a consistent finitely axiomatised theory that is effectively Friedman-reflexive?
 This is Question~\ref{q3}.
\item
Is \desca\ reflexive? If, against expectation, it turns out to be reflexive, 
can we modify the construction to find a non-reflexive, Friedman-reflexive, sequential theory?
 This is Question~\ref{q5}(i).
\item
 Is there 
a finitely axiomatised $A$ and $K: A\rhd \desca$, such that, for no $D(x)$ in the $A$ language,
we have, for all $B$ in the $A$-language,  $A \vdash D(\gn B) \iff \graydi_{K,A}B$\,?
Here the numerals are the $K$-numerals.
 This is Question~\ref{q5}(ii).
\item
Is there an RE sequential theory that is Friedman-reflexive but not effectively so?
 This is Question~\ref{q5}(iii).
\item
Suppose $U$ is sequential and restrictedly (effectively) Friedman-reflexive. Does it follow that $U$ is essentially
sententially reflexive?  This is Question~\ref{q5}(iv).
  \item
  Can we find a more canonical axiom scheme for Peano Corto in a coordinate-free way? We note that the cuts are already schematic.
  The whole problem is in replacing the schematic variable that ranges over \emph{$\Sigma^0_1$-sentences} by an unrestricted
 one that ranges over arbitrary formulas. This is Question~\ref{q4}(i).
  \item
  Is there a coordinate-free specification of an interpretation of Peano Corto in {\sf EA} or in some $\mathrm I\Sigma_n$?
   This is Question~\ref{q4}(ii).
  \item
  Do we have ${\sf EA} \rhd {\sf IIA}$?
  This is Question~\ref{q4}(iii).
  \item
Does {\sf Seq} or, some appropriate variant of it, lift to a functor on $\mathbb E$? 
This is Question~\ref{q10}.
\item
 The second part of the proof of Theorem~\ref{localsmurf} only uses a singleton set $X$. We wonder
 whether that means that our approach may be simplified.
This is Question~\ref{q11}.
\end{enumerate}
}
\end{document}